\def\squarebox#1{\hbox to #1{\hfill\vbox to #1{\vfill}}}
\newcommand{\qedem}{\hspace*{\fill}
\vbox{\hrule\hbox{\vrule\squarebox{.667em}\vrule}\hrule}\smallskip}
\newtheorem{teo}{Theorem}[section]
\newtheorem{prop}[teo]{Proposition}
\newtheorem{coro}[teo]{Corollary}
\newtheorem{lema}[teo]{Lemma}
\newtheorem{defin}[teo]{Definition}
\begin{document}

\title{Cellular Homology of Real Flag Manifolds}

\author{Lonardo Rabelo}
\address{Department of Mathematics, Federal University of Juiz de Fora, Juiz de Fora 36036-900, Minas Gerais, Brazil}
\email{lonardo@ice.ufjf.br}
\thanks{Supported by FAPESP grant number 08/04628-6}

\author{Luiz A. B. San Martin}
\address{Imecc - Unicamp, Departamento de Matem\'atica. Rua S\'ergio Buarque de
Holanda, 651, Cidade Universit\'aria Zeferino Vaz. 13083-859 Campinas S\~ao
Paulo, Brazil}
\email{smartin@ime.unicamp.br}
\thanks{Supported by CNPq grant n$^{\mathrm{o}}$ 305513/2003-6 and FAPESP grant n$^{%
\mathrm{o}}$ 07/06896-5}
\date{}

\begin{abstract}
Let $\mathbb{F}_{\Theta }=G/P_{\Theta }$ be a generalized flag manifold,
where $G$ is a real noncompact semi-simple Lie group and $P_{\Theta }$ a
parabolic subgroup. A classical result says the Schubert cells, which are
the closure of the Bruhat cells, endow $\mathbb{F}_{\Theta }$ with a
cellular CW structure. In this paper we exhibit explicit parametrizations of
the Schubert cells by closed balls (cubes) in $\mathbb{R}^{n}$ and use them
to compute the boundary operator $\partial $ for the cellular homology. We
recover the result obtained by Kocherlakota [1995], in the setting of
Morse Homology, that the coefficients of $\partial $ are $0$ or $\pm 2$ (so
that $\mathbb{Z}_{2}$-homology is freely generated by the cells). In
particular, the formula given here is more refined in the sense that the
ambiguity of signals in the Morse-Witten complex is solved.

\end{abstract}

\maketitle

\noindent

\textit{AMS 2010 subject classification:} 57T15, 14M15.
\noindent

\textit{Key words and phrases:} Flag manifolds, cellular homology, Schubert cells.

\section*{Introduction}

Let $\mathbb{F}_{\Theta }=G/P_{\Theta }$ be a flag manifold of the
non-compact semi-simple Lie group $G$ where $P_{\Theta }$ is a parabolic
subgroup. A classical result says that a cellular structure of $\mathbb{F}%
_{\Theta }$ is given by the Schubert cells $\mathcal{S}_{w}^{\Theta }$ which
are the closure of the Bruhat cells, that is, the components of the Bruhat
decomposition 
\begin{equation*}
\mathbb{F}_{\Theta }=\coprod_{w\in \mathcal{W}/\mathcal{W}_{\Theta }}N\cdot
wb_{\Theta },
\end{equation*}%
where $N$ is the nilpotent component of the Iwasawa decomposition, $\mathcal{%
W}$ is the Weyl group of the corresponding Lie algebra and $\mathcal{W}_{\Theta}$ is the subgroup of $\mathcal{W}$ associated with $\Theta$.

In order to compute the cellular homology of $\mathbb{F}_{\Theta}$, our first task in this paper is to provide explicit parametrizations of the Schubert cells $\mathcal{S}_{w}^{\Theta }$ by cubes $\left[ 0,\pi \right] ^{d}\subset \mathbb{R}^{d}$ which are defined in terms of the reduced decompositions of $w$. This description turns out to be useful to get algebraic formulas for the boundary
operator $\partial $ of the cellular homology. 

Our strategy consists by working firstly in the maximal flag manifolds, denoted by $\mathbb{F}$, and then by projecting down the Schubert cells via the canonical map $\pi _{\Theta }:
\mathbb{F}\rightarrow \mathbb{F}_{\Theta }$. To parametrize a Schubert cell $\mathcal{S}_{w}$, $w\in \mathcal{W}$, in the maximal flag manifold $\mathbb{F}$, we start with a minimal decomposition $w=r_{1}\cdots r_{n}$ of $w$ as a product of reflections $r_{i}=r_{\alpha _{i}}$ with respect to the simple roots. Then, similar to the construction of Bott-Samelson
dessingularization, we see $\mathcal{S}_{w}$ as a product $K_{1}\cdots
K_{n}\cdot b_{0}$, where $b_{0}=P$ is the origin of $\mathbb{F}$ and $K_{i}$
are maximal compact subgroups of rank one Lie groups $G_{i}$ (see Section %
\ref{secschubertmax}). This presents $\mathcal{S}_{w}$ as successive
fibrations by spheres $S^{d_{i}}$, where $d_{i}$ are the multiplicities of
the roots $\alpha _{i}$ - which may be not equal to $1$. Thus a
parametrization $\Phi _{w}:B^{d}\rightarrow 
\mathcal{S}_{w}$ of a cell of dimension $d=d_{1}+\cdots d_{n}$ is obtained by viewing $S^{d_{i}}$
as the ball $B^{d_{i}}$ whose boundary is collapsed to a point.

The case of interest for homology are the roots $\alpha _{i}$ with
multiplicity $d_{i}=1$. This is because the boundary operator $\partial $
for the cellular homology takes the form $\partial \mathcal{S}_{w}=\sum
c\left( w,w^{\prime }\right) \mathcal{S}_{w^{\prime }}$ with $w^{\prime
}=r_{1}\cdots \widehat{r_{i}}\cdots r_{n}$ and the index $i$ is such that $%
d_{i}=1$. In this case, the characteristic map $\Phi _{w}$ is defined in $%
B^{d-1}\times \left[ 0,\pi \right] $ and the coefficient $c\left(
w,w^{\prime }\right) $ is the sum of the degrees of the attaching maps, that
is, the restrictions of $\Phi _{w}$ to $B^{d-1}\times \{0\}$ and $%
B^{d-1}\times \{\pi \}$ (see Section \ref{sechommax}, in particular the example of $\mathrm{Sl}\left( 3,\mathbb{R}\right) $ in Subsection \ref{subsecsl3}). 
This way we get that any coefficient $c\left( w,w^{\prime }\right) $ is $0$
or $\pm 2$. In particular, the $\mathbb{Z}_{2}$-homology is the vector space
with basis $\mathcal{S}_{w}$, $w\in \mathcal{W}$. 

Once the maximal flag manifold is worked out, we get the boundary operator $%
\partial ^{\Theta }$ in a general flag manifold $\mathbb{F}_{\Theta }$. 
Actually we can prove that for a cell $\mathcal{S}_{w}^{\Theta }$ in $\mathbb{F}_{\Theta }$ there exists a unique (minimal) cell $\mathcal{S}_{w}$ in $\mathbb{F}$ with $\pi _{\Theta
}\left( \mathcal{S}_{w}\right) =\mathcal{S}_{w}^{\Theta }$. Then $\partial
^{\Theta }$ is obtained directly from the $\partial $ applied to the minimal
cells.

These results were already obtained by Kocherlakota \cite{Koc95} in the realm
of Morse homology. In \cite{Koc95}, Theorem 1.1.4, it is proved that the
boundary operator for the Morse-Witten complex has coefficients $0$ or $\pm
2 $ as well. Clearly the cellular and the Morse-Witten complexes are
intimately related since the Bruhat cells are the unstable manifolds of the
gradient flow of a Morse function (see Duistermat-Kolk-Varadarajan \cite{DKV83}
). Nevertheless the cellular point of view has the advantage of showing the
geometry in a more evident way. For instance, in the Subsection \ref{secgrad}, 
we provide a description of the flow lines of the gradient flow inside a Bruhat cell
in terms of characteristic maps of the cellular decomposition.
Also, the choice of minimal decompositions for the elements of $\mathcal{W}$ fix 
certain signs that are left ambiguous in the Morse-Witten complex. 

The construction of cellular decompositions of group manifolds and
homogeneous spaces is an old theme. For the classical compact Lie groups one
can build cells using products of reflections via a method that goes back to
Whitehead \cite{Whi44} and was later developed by Yokota \cite{Yok55}, 
\cite{Yok56}. By projection the decomposition on group level induces
decompositions on the Stiefel manifolds $V_{n,k}$, that were exploited by
Miller \cite{Mil53} to get several homological properties of these manifolds.
On the contrary the cellular decompositions of the group manifolds do not
project, in general, to cells in the flag manifolds. Hence that method does not yield cellular
decomposition of the flag manifolds.

On the other hand the Schubert cells are central objects in the study of
(co) homological properties of the %complex 
flag manifolds (see e.g.
Bernstein-Gelfand-Gelfand \cite{BGG73} and references therein). In the complex
case the cellular homology is computed trivially since the cells are all
even dimensional hence boundary operator $\partial =0$ and the homology
groups are freely generated. We refer also to Casian-Stanton \cite{CS99} 
for an approach through representation theory of algebraic reductive groups.

For the real flag manifolds $\partial $ is not, in general, trivial and its
computation requires explicit expressions for the gluing maps between the
cells as we provide in this paper. To the best of our knowledge there is no
systematic construction of the cellular decomposition of the flag manifolds
(of arbitrary semi-simple Lie groups) through the Bruhat cells and their
closures the generalized Schubert cells.

The cells constructed here appeared before (up to cells of dimension two) in
Wiggerman \cite{Wig98}, that uses them to get generators and relations for the
fundamental groups of the flag manifolds. Also in Rabelo \cite{Rab16} and 
Rabelo-Silva \cite{RL18} the method of this paper is used to compute the 
integral homology of the Real isotropic Grassmannians (those of type B,C and D).
%The example of the classical Grassmannian is worked out in the Section \ref{secgrass}.

The article is organized as follows: In Section \ref{secschubertmax} we construct the parametrizations of the
Schubert cells on the maximal flag manifolds and analyze the attaching
(gluing) maps. In particular, in the subsection \ref{secgrad} we
look at some aspects of the gradient flow yielding Morse homology.
Section \ref{sechommax} is devoted to the boundary operator $\partial $ on the maximal flag manifold. 
The partial flag manifolds are treated in Section \ref{secpartial}.

In this point we would like to thank Lucas Seco for his comments on some
proofs and for his interest in the problem suggesting interesting references
related to this question.

\subsection*{Notation}\label{secpre}

Flag manifolds are defined as homogeneous spaces $G/P$ where $G$ is a
noncompact semi-simple Lie group and $P$ is a parabolic subgroup of $G$. 

Let $\mathfrak{g}$ be a noncompact real semi-simple Lie algebra. The flag
manifolds for the several groups $G$ with Lie algebra $\mathfrak{g}$ are the
same. With this in mind we take always $G$ to be the identity component of
the automorphism group of $\mathfrak{g}$, which is centerless.
%\vspace{0.5cm} \textbf{Cartan and Iwasawa Decompositions}

Take a Cartan decomposition $\mathfrak{g} = \mathfrak{k} \oplus \mathfrak{s}$
with $\mathfrak{k}$ the compactly embedded subalgebra and denote by $\theta$
the corresponding Cartan involution. Let $\mathfrak{a}$ be a maximal abelian
subalgebra contained in $\mathfrak{s}$ and denote by $\Pi$ the set of roots
of the pair $(\mathfrak{g}, \mathfrak{a})$. Fix a simple system of roots $%
\Sigma \subset \Pi$. Denote by $\Pi^{\pm}$ the set of positive and negative
roots respectively and by $\mathfrak{a}^+$ the Weyl chamber 
\begin{equation*}
\mathfrak{a}^+ = \{ H \in \mathfrak{a} : \alpha (H) > 0 \mbox{ for all }
\alpha \in \Sigma \}.
\end{equation*}

Let $\mathfrak{n} = \displaystyle \sum_{\alpha \in \Pi^+} \mathfrak{g}%
_\alpha $ be the direct sum of root spaces corresponding to the positive
roots. The Iwasawa decomposition of $\mathfrak{g}$ is given by $\mathfrak{g}
= \mathfrak{k} \oplus \mathfrak{a} \oplus \mathfrak{n}$. The notations $K, A$
and $N$ are used to indicate the connected subgroups whose Lie algebras are $%
\mathfrak{k}, \mathfrak{a}$ and $\mathfrak{n}$ respectively.

A sub-algebra $\mathfrak{h} \subset \mathfrak{g}$ is said to be a Cartan sub-algebra if $\mathfrak{h}_{\mathbb{C}}$ is a Cartan sub-algebra of $\mathfrak{g}_{\mathbb{C}}$. If $\mathfrak{h} = \mathfrak{a}$ is a Cartan sub-algebra of $\mathfrak{g}$ we say that $\mathfrak{g}$ is a split real form of $\mathfrak{g}_{\mathbb{C}}$.

A minimal parabolic subalgebra of $\mathfrak{g}$ is given by $\mathfrak{g} = 
\mathfrak{m} \oplus \mathfrak{a} \oplus \mathfrak{n}$ where $\mathfrak{m}$
is the centralizer of $\mathfrak{a}$ in $\mathfrak{k}$. Let $P$ be the
minimal parabolic subgroup with Lie algebra $\mathfrak{p}$ which is the
normalizer of $\mathfrak{p}$ in $G$. We call $\mathbb{F} = G/P$ the maximal
flag manifold of $G$ and denote by $b_0$ the base point $1 \cdot P$ in $G/P$.

Associated to a subset of simple roots $\Theta\subset \Sigma$ there are
several Lie algebras and groups. We write $\mathfrak{g}(\Theta)$ for the
semi-simple Lie algebra generated by $\mathfrak{g}_{\pm \alpha}$, $\alpha
\in \Theta$. Let $G(\Theta)$ be the connected group with Lie algebra $%
\mathfrak{g}(\Theta)$. Moreover, let $\mathfrak{n}_\Theta$ be the subalgebra
generated by the roots spaces $\mathfrak{g}_{-\alpha}$, $\alpha \in \Theta$
and put 
\begin{equation*}
\mathfrak{p}_\Theta = \mathfrak{n}_\Theta \oplus \mathfrak{p}.
\end{equation*}

The normalizer $P_{\Theta }$ of $\mathfrak{p}_{\Theta }$ in $G$ is a
standard parabolic subgroup which contains $P$. The corresponding flag
manifold $\mathbb{F}_{\Theta }$ is called a partial flag manifold of $G$ or
flag manifold of type $\Theta $. We denote by $b_{\Theta }$ the base point $%
1\cdot P_{\Theta }$ in $G/P_{\Theta }$. Such a flag manifold can also be
written as $\mathbb{F}_{\Theta }=K/K_{\Theta }$ where $K_{\Theta }=P_{\Theta
}\cap K$.

The Weyl group $\mathcal{W}$ associated to $\mathfrak{a}$ is the finite
group generated by the reflections over the root hyperplanes $\alpha = 0$
contained in $\mathfrak{a}$, $\alpha \in \Sigma $, and can be alternatively
given as the quotient $M^* / M$ where $M^*$ and $M$ are respectively the
normalizer and the centralizer of $\mathfrak{a}$ in $K$ (the Lie algebra of $%
M$ is $\mathfrak{m}$). We use the same letter to denote a representative of $%
w$ in $M^*$.

For the subset $\Theta \subset \Sigma$, there exists the subgroup $\mathcal{W%
}_\Theta$ which acts trivially on $\mathfrak{a}_\Theta = \{ H \in \mathfrak{a%
} : \alpha (H) = 0 , \alpha \in \Theta\}$. Alternatively, $\mathcal{W}%
_\Theta $ may be seen as the subgroup of the Weyl group generated by the
reflections with respect to the roots $\alpha \in \Theta$.

Viewing the elements of $\mathcal{W}$ as product of simple reflections, the
length $\ell(w)$ of $w \in \mathcal{W}$, is the number of simple reflections in
any reduced expression of $w$ which is equal to the cardinality of $\Pi_w =
\Pi^+ \cap w \Pi^-$, the set of positive roots sent to negative roots by $%
w^{-1}$. If $w = r_1 \cdots r_n$ is a reduced expression of $w$ then 
\begin{equation*}
\Pi_w = \{ \alpha_{1}, r_1 \alpha_2, \ldots, r_1 \cdots r_{n-1}\alpha_n\}.
\end{equation*}

There are two equivalent definitions of order between elements in the Weyl
group (see Humphreys \cite{Hum90}).

\begin{enumerate}
\item First, two elements are connected, denoted $w_{1}\rightarrow w_{2}$,
if $\ell(w_{1})<\ell(w_{2})$ and there is a root $\alpha $ (not necessarily
simple) such that $w_{1}r_{\alpha }=w_{2}$. Now that $w_{1}<w_{2}$ if there
are $u_{1},\ldots ,u_{k}\in \mathcal{W}$ with 
\begin{equation*}
w_{1}\rightarrow u_{1}\rightarrow \cdots u_{k}\rightarrow w_{2}.
\end{equation*}

It may happen that $w_{1}\rightarrow w_{2}$ with $\ell(w_{1})+1=\ell(w_{2})$ but
there is no simple root with $w_{1}r_{\alpha }=w_{2}$.

The definion may be changed by multiplication in the left $r_{\alpha
}w_{1}=w_{2}$ because $r_{\alpha }w_{1}=w_{1}(w_{1}^{-1}r_{\alpha
}w_{1})=w_{1}r_{\beta }$ with $\beta =w^{-1}\alpha $.

\item $w_{1}\leq w_{2}$ if given a reduced expression $w_{2}=r_{1}\cdots
r_{\ell(w_{2})}$ then $w_{1}=r_{i_{1}}\cdots r_{i_{k}}$ for some indices $%
i_{1}<\cdots <i_{k}$.
\end{enumerate}

There is a unique $w_{0}\in \mathcal{W}$ such that $w_{0}\Pi ^{+}=\Pi ^{-}$
which we call the principal involution and is the maximal element in the
Bruhat-Chevalley order.

A partial flag manifold is the base space for the natural equivariant
fibration $\pi _{\Theta }:\mathbb{F}\rightarrow \mathbb{F}_{\Theta }$ whose
fiber is $P_{\Theta }/P$. This fiber is a flag manifold of a semi-simple Lie
group $M_{\Theta }\subset G$ whose rank is the order of $\Theta $. The Weyl
group of $M_{\Theta }$ is the subgroup $\mathcal{W}_{\Theta }$. Its orbit
through $b_{0}$ is contained in the fiber $\pi _{\Theta }^{-1}\pi _{\Theta
}(b_{0})$.

In particular, the group $M_{\Theta }$ is of rank one if $\Theta $ is a
singleton. For example, if $\alpha $ is a simple root, the fiber of $\mathbb{%
F}\rightarrow \mathbb{F}_{\alpha }=G/P_{\alpha }$ which is $P_{\alpha }/P$,
coincides with the (unique) flag manifold of the group $G(\alpha )$ whose
Lie algebra is $\mathfrak{g}(\alpha )$, generated by $\mathfrak{g}_{-\alpha
} $ and $\mathfrak{g}_{\alpha }$. These rank one flag manifolds are spheres $%
S^{m}$, where $m=\dim (\mathfrak{g}_{\alpha }+\mathfrak{g}_{2\alpha })$.

The Bruhat decomposition presents the flag manifolds as a union of $N$%
-orbits (or one of its conjugates). It says that the $N$-orbits on a flag
manifold $\mathbb{F}_{\Theta}$ is finite and coincide with the orbits that
goes through the $A$-fixed points.

\begin{prop}
Let $b_{\Theta }$ be the origin of $\mathbb{F}_{\Theta }$. Then the set $A$%
-fixed points coincides with the orbit $M^{\ast }b_{\Theta }$. This set is
finite and is in bijection with $\mathcal{W}/\mathcal{W}_{\Theta }$.
\end{prop}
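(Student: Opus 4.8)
The plan is to establish the three assertions using only the Bruhat decomposition $\mathbb{F}_{\Theta}=\coprod_{w\in\mathcal{W}/\mathcal{W}_{\Theta}}N\cdot wb_{\Theta}$ recalled above, together with two elementary normalization facts: $A$ normalizes $N$ (since $[\mathfrak{a},\mathfrak{n}]\subseteq\mathfrak{n}$), and any representative $w\in M^{\ast}$ normalizes $A$ (since $\mathrm{Ad}(w)\mathfrak{a}=\mathfrak{a}$). Write $\mathrm{Fix}(A)$ for the set of $A$-fixed points of $\mathbb{F}_{\Theta}$.

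First I would check $M^{\ast}b_{\Theta}\subseteq\mathrm{Fix}(A)$. For $w\in M^{\ast}$ and $a\in A$ we have $a\cdot wb_{\Theta}=w\,(w^{-1}aw)\,b_{\Theta}$, and $w^{-1}aw\in A\subseteq P\subseteq P_{\Theta}$ fixes $b_{\Theta}$, so $a\cdot wb_{\Theta}=wb_{\Theta}$. Since $\mathcal{W}=M^{\ast}/M$ is finite, $M^{\ast}b_{\Theta}$ is finite.

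Next I would identify $M^{\ast}b_{\Theta}$ with $\mathcal{W}/\mathcal{W}_{\Theta}$. The orbit map on $M^{\ast}$ descends to $\mathcal{W}$ because $M$ fixes $b_{\Theta}$, and it descends further to $\mathcal{W}/\mathcal{W}_{\Theta}$ because $\mathcal{W}_{\Theta}$ is generated by the reflections $r_{\alpha}$ with $\alpha\in\Theta$, and each such $r_{\alpha}$ has a representative in $G(\alpha)\subseteq G(\Theta)\subseteq P_{\Theta}$; the inclusion $G(\Theta)\subseteq P_{\Theta}$ holds because $\mathfrak{g}_{\alpha}\subseteq\mathfrak{n}$ and $\mathfrak{g}_{-\alpha}\subseteq\mathfrak{n}_{\Theta}$ for $\alpha\in\Theta$, whence $\mathfrak{g}(\Theta)\subseteq\mathfrak{p}_{\Theta}$, so $\mathcal{W}_{\Theta}$ fixes $b_{\Theta}$. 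Conversely, if $w_{1}b_{\Theta}=w_{2}b_{\Theta}$ then $Nw_{1}b_{\Theta}=Nw_{2}b_{\Theta}$, and the disjointness of the Bruhat cells forces $w_{1}\mathcal{W}_{\Theta}=w_{2}\mathcal{W}_{\Theta}$. Hence $w\mathcal{W}_{\Theta}\mapsto wb_{\Theta}$ is a bijection $\mathcal{W}/\mathcal{W}_{\Theta}\to M^{\ast}b_{\Theta}$.

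The only step with genuine content is the reverse inclusion $\mathrm{Fix}(A)\subseteq M^{\ast}b_{\Theta}$. Given $x\in\mathrm{Fix}(A)$, the Bruhat decomposition yields $w\in\mathcal{W}$ and $n\in N$ with $x=n\,wb_{\Theta}$. Pick $H\in\mathfrak{a}^{+}$ and set $a_{t}=\exp(tH)$. Using that $A$ normalizes $N$ and that $w^{-1}a_{t}w\in A\subseteq P_{\Theta}$ fixes $b_{\Theta}$, we get $a_{t}x=(a_{t}na_{t}^{-1})\,wb_{\Theta}$. Writing $n=\exp X$ with $X=\sum_{\alpha\in\Pi^{+}}X_{\alpha}$, $X_{\alpha}\in\mathfrak{g}_{\alpha}$, one has $a_{t}na_{t}^{-1}=\exp\bigl(\sum_{\alpha\in\Pi^{+}}e^{t\alpha(H)}X_{\alpha}\bigr)\longrightarrow e$ as $t\to-\infty$, because $\alpha(H)>0$ for all $\alpha\in\Pi^{+}$. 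Hence $a_{t}x\to wb_{\Theta}$; but $a_{t}x=x$ for every $t$, so $x=wb_{\Theta}\in M^{\ast}b_{\Theta}$. Thus the essential mechanism is that the flow of a regular $H\in\mathfrak{a}^{+}$ sweeps each Bruhat cell $Nwb_{\Theta}$ into its distinguished point $wb_{\Theta}$; the remaining ingredient, the disjoint $N$-orbit decomposition indexed by $\mathcal{W}/\mathcal{W}_{\Theta}$, is the classical Bruhat decomposition quoted above. (One can also avoid quoting it: reduce an $A$-fixed point to the form $kb_{\Theta}$ with $k\in K$ by the Iwasawa decomposition, and then $k^{-1}\mathfrak{a}k\subseteq\mathfrak{p}_{\Theta}\cap\mathfrak{s}=\mathfrak{a}_{\Theta}\oplus(\mathfrak{g}(\Theta)\cap\mathfrak{s})$ exhibits $k^{-1}\mathfrak{a}k$ as a maximal abelian subspace of $\mathfrak{s}$ conjugate to $\mathfrak{a}$ through an element of $K\cap G(\Theta)\subseteq P_{\Theta}$, which again puts $kP_{\Theta}$ in $M^{\ast}b_{\Theta}$.)
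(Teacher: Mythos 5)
Your argument is correct. Note that the paper itself states this proposition without proof, as recalled background for the Bruhat decomposition (with the standard references \cite{DKV83}, \cite{War72}), so there is no in-paper proof to compare against; your route is the classical one. The inclusion $M^{\ast}b_{\Theta}\subseteq\mathrm{Fix}(A)$ and the bijection with $\mathcal{W}/\mathcal{W}_{\Theta}$ are handled exactly as one expects (representatives of $\mathcal{W}_{\Theta}$ lie in $G(\Theta)\subseteq P_{\Theta}$, injectivity from disjointness of the Bruhat cells), and your key step --- contracting $n\in N$ by conjugation with $\exp(tH)$, $H\in\mathfrak{a}^{+}$, so that an $A$-fixed point in $N\cdot wb_{\Theta}$ must be $wb_{\Theta}$ itself --- is precisely the dynamical mechanism the paper later exploits in its gradient-flow discussion (Subsection \ref{secgrad}), where the $wb_{0}$ are the singularities and the Bruhat cells the unstable manifolds; there is no circularity, since the Bruhat decomposition you quote is established independently of the identification of the $A$-fixed points.
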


Thus the Bruhat decomposition reads 
\begin{equation*}
\mathbb{F}_{\Theta }=\coprod_{w\in \mathcal{W}/\mathcal{W}_{\Theta }}N\cdot
wb_{\Theta }\quad, \quad w\in M^{\ast },
\end{equation*}%
where $N\cdot w_{1}b_{\Theta }=N\cdot w_{2}b_{\Theta }$ if $w_{2}\mathcal{W}%
_{\Theta }=w_{1}\mathcal{W}_{\Theta }$. When there is an equivariant
fibration $\mathbb{F}_{\Theta _{1}}\rightarrow \mathbb{F}_{\Theta _{2}}$ (in
particular when $\mathbb{F}_{\Theta _{1}}=\mathbb{F}$) the $N$-orbits
project onto $N$-orbits by equivariance, hence the fibration respects the
Bruhat decompositions.

Each $N$-orbit through $w$ is diffeormophic to an Euclidean space. Such an
orbit $N\cdot wb_{\Theta }$ is called a Bruhat cell. Its dimension is given
by the formula 
\begin{equation*}
\dim \left( N\cdot wb_{\Theta }\right) =\displaystyle\sum_{\alpha \in \,\Pi
_{w}\,\setminus \,\langle \Theta \rangle }m_{\alpha }
\end{equation*}
where $m_{\alpha }$ is the multiplicity of the root space $\mathfrak{g}%
_{\alpha }$ and $\langle \Theta \rangle $ denotes the roots in $\Pi $
generated by $\Theta $ (see the Lemma \ref{lemproperties} for the maximal
flag case and Lemma \ref{minimal_element} for the partial flag case). In
particular, the Bruhat cell $N\cdot w_{0}b_{\Theta }$ is an open and dense
orbit. The closure of the Bruhat cells are called (generalized) Schubert cells.

\begin{defin}
A Schubert cell is the closure of a Bruhat Cell: 
\begin{equation*}
\mathcal{S}_{w}^{\Theta }=\mathrm{cl}(N\cdot wb_{\Theta }).
\end{equation*}
\end{defin}

The Schubert cells endow the flag manifolds with a cellular decomposition. For a maximal flag manifold we avoid the superscript $\Theta $ and write
simply 
\begin{equation*}
\mathcal{S}_{w}=\mathrm{cl}(N\cdot wb_{0})
\end{equation*}

We recall the following well known facts (see \cite{DKV83} or 
Warner \cite{War72}).

\begin{prop}
\label{BC-order} $\mathcal{S}_{w_{1}}^{\Theta }\subset \mathcal{S}%
_{w_{2}}^{\Theta }$ if and only if $w_{1}\leq w_{2}$.
\end{prop}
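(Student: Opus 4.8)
The plan is to reduce the statement to the maximal flag manifold $\mathbb{F}$, where I will prove the sharper assertion that $\mathcal{S}_{w}=\bigsqcup_{u\le w}N\cdot u b_0$ (which is equivalent to the proposition once one uses that the Bruhat cells partition $\mathbb{F}$). For the reduction I would use that $\pi_{\Theta}:\mathbb{F}\to\mathbb{F}_{\Theta}$ is a closed continuous surjection, being a fibre bundle with compact total space, so that $\pi_{\Theta}(\mathrm{cl}\,A)=\mathrm{cl}(\pi_{\Theta}A)$ for every subset $A$; combined with equivariance $\pi_{\Theta}(N\cdot wb_0)=N\cdot wb_{\Theta}$ this gives $\pi_{\Theta}(\mathcal{S}_{w})=\mathcal{S}_{w}^{\Theta}$. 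Passing to the minimal length representative $\widetilde{w}$ of each coset $w\mathcal{W}_{\Theta}$ (and, dually, using the maximal length representative, so that $\pi_{\Theta}^{-1}(\mathcal{S}^{\Theta}_{w_2})=\mathcal{S}_{\widetilde{w_2}w_{0,\Theta}}$ with $w_{0,\Theta}$ the longest element of $\mathcal{W}_{\Theta}$), both the containment $\mathcal{S}^{\Theta}_{w_1}\subseteq\mathcal{S}^{\Theta}_{w_2}$ and the order relation $w_1\mathcal{W}_{\Theta}\le w_2\mathcal{W}_{\Theta}$ translate into the corresponding statements for $\widetilde{w_1},\widetilde{w_2}$ in $\mathbb{F}$, using the standard properties of minimal coset representatives (Humphreys \cite{Hum90}, see also Lemma \ref{minimal_element}). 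So everything reduces to the two implications for $\mathbb{F}$.

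For ``$w_1\le w_2\Rightarrow\mathcal{S}_{w_1}\subseteq\mathcal{S}_{w_2}$'' I would use the Bott--Samelson type presentation $\mathcal{S}_{w}=K_1\cdots K_n\cdot b_0$ established in Section \ref{secschubertmax}, where $w_2=r_1\cdots r_n$ is a reduced expression and $K_i\subset G(\alpha_i)$. By version (2) of the Bruhat order we may write $w_1=r_{i_1}\cdots r_{i_k}$ with $i_1<\cdots<i_k$; choosing a representative $\dot{r}_{i_j}\in K_{i_j}\cap M^{*}$ of each reflection, and the identity element (which lies in every $K_i$) in the remaining factors, yields a representative $\dot{w_1}$ of $w_1$ with $\dot{w_1}b_0\in K_1\cdots K_n\cdot b_0=\mathcal{S}_{w_2}$. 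Since $\mathcal{S}_{w_2}=\mathrm{cl}(N\cdot w_2b_0)$ is $N$-invariant and $N\cdot\dot{w_1}b_0=N\cdot w_1b_0$, we get $N\cdot w_1b_0\subseteq\mathcal{S}_{w_2}$, hence $\mathcal{S}_{w_1}=\mathrm{cl}(N\cdot w_1b_0)\subseteq\mathcal{S}_{w_2}$.

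The converse ``$\mathcal{S}_{w_1}\subseteq\mathcal{S}_{w_2}\Rightarrow w_1\le w_2$'' is the real obstacle. I would prove the equivalent statement that no Bruhat cell $N\cdot ub_0$ with $u\not\le w_2$ meets $\mathcal{S}_{w_2}$, by induction on $\ell(w_2)$; the case $w_2=1$ is trivial. For the inductive step pick a simple root $\alpha$ with $r_{\alpha}w_2<w_2$ and consider $\pi_{\alpha}:\mathbb{F}\to\mathbb{F}_{\alpha}$. A short rank-one computation (the $P_{\alpha}$-orbit of $w_2b_0$ is the whole fibre sphere through it) shows that $\mathcal{S}_{w_2}$ is $\pi_{\alpha}$-saturated, $\mathcal{S}_{w_2}=\pi_{\alpha}^{-1}(\mathcal{S}^{\alpha}_{[w_2]})$, and that $\mathcal{S}^{\alpha}_{[w_2]}=\pi_{\alpha}(\mathcal{S}_{r_{\alpha}w_2})$, where $[\cdot]$ denotes the class in $\mathcal{W}/\mathcal{W}_{\alpha}$. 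If $N\cdot ub_0$ meets $\mathcal{S}_{w_2}$ then $N\cdot[u]b_{\alpha}$ meets $\mathcal{S}^{\alpha}_{[w_2]}$, so by the induction hypothesis applied to $\mathcal{S}_{r_{\alpha}w_2}$ (which has strictly smaller length) one gets $[u]\le[w_2]$; then the sphere-bundle structure of $\pi_{\alpha}$, together with the standard comparison of $\le$ on $\mathcal{W}$ with $\le$ on $\mathcal{W}/\mathcal{W}_{\alpha}$ (according to whether $r_{\alpha}u<u$ or $r_{\alpha}u>u$), upgrades this to $u\le w_2$. The bookkeeping in this last step --- matching the CW incidence inside the fibres with the Bruhat order --- is the only delicate point; alternatively, one may simply invoke the identification of the Bruhat cells with the unstable manifolds of a gradient flow and quote Duistermaat--Kolk--Varadarajan \cite{DKV83} or Warner \cite{War72}.
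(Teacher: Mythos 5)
First, a point of comparison: the paper does not prove Proposition \ref{BC-order} at all --- it is recalled as a well-known fact with references to \cite{DKV83} and \cite{War72}. So the question is whether your argument is a self-contained substitute, and it is not. Your forward implication is fine: using $\mathcal{S}_{w_2}=K_1\cdots K_n\cdot b_0$ (Proposition \ref{Schubert_2}, which rests on \cite{San98} and not on \ref{BC-order}, so there is no circularity), taking representatives in $K_{i_j}\cap M^{*}$ of a subword of the reduced expression and then using that $\mathcal{S}_{w_2}$ is closed and $N$-invariant does give $\mathcal{S}_{w_1}\subset\mathcal{S}_{w_2}$ whenever $w_1\le w_2$.

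The genuine gap is the converse, which is where the content of the proposition lies. As written your induction does not run: saturation of $\mathcal{S}_{w_2}$ under $\pi_\alpha$ corresponds to a descent on the \emph{right}, $w_2r_\alpha<w_2$ (a reduced word ending in $r_\alpha$; compare Corollary \ref{corschubexaust}, where the exhaustion map of the last letter is applied last and is idempotent), not to $r_\alpha w_2<w_2$; likewise the fibre of $\pi_\alpha$ through $w_2b_0$ is $w_2P_\alpha\cdot b_0$, not the $P_\alpha$-orbit of $w_2b_0$. More seriously, your induction hypothesis is a statement about $\mathbb{F}$ but you apply it to cells of $\mathbb{F}_\alpha$, and the decisive step --- upgrading $[u]\le[w_2]$ in $\mathcal{W}/\mathcal{W}_\alpha$ to $u\le w_2$, which requires the lifting property of the Bruhat order together with a case analysis of whether $N\cdot ub_0$ is a section of $\pi_\alpha$ over $N\cdot [u]b_\alpha$ or sweeps out whole fibres (according to whether $ur_\alpha<u$ or $ur_\alpha>u$) --- is exactly the heart of the matter and is left as ``bookkeeping''. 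Your proposed fallback, to ``simply invoke'' \cite{DKV83} or \cite{War72}, is not an alternative proof: it is precisely what the paper itself does. The same remark applies to your reduction of the $\Theta$-case to the maximal flag: the identity $\pi_{\Theta}^{-1}(\mathcal{S}^{\Theta}_{[w_2]})=\mathcal{S}_{\widetilde{w_2}w_{0,\Theta}}$ is itself a closure statement of the same nature (needing the maximal-flag closure relations plus the coset combinatorics of \cite{Deo77}, cf.\ Lemma \ref{minimal_element}), so as written it is another borrowed ingredient rather than something you have established.
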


\begin{prop}
\label{BC-order-1} $\mathcal{S}_{w}^{\Theta }=\displaystyle{\cup _{u\leq
w}N\cdot ub_{\Theta }}$.
\end{prop}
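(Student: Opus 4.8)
The plan is to exploit Proposition \ref{BC-order} together with the elementary observation that a Schubert cell, being the closure of an $N$-orbit, is automatically a union of Bruhat cells. So the argument is essentially formal once Proposition \ref{BC-order} is granted.

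First I would observe that $\mathcal{S}_{w}^{\Theta }=\mathrm{cl}(N\cdot wb_{\Theta })$ is $N$-invariant: each element of $N$ acts as a homeomorphism of $\mathbb{F}_{\Theta }$, so the closure of the $N$-orbit $N\cdot wb_{\Theta }$ is again $N$-stable. Since the Bruhat decomposition partitions $\mathbb{F}_{\Theta }$ into the finitely many $N$-orbits $N\cdot ub_{\Theta }$, $u\in \mathcal{W}/\mathcal{W}_{\Theta }$, every $N$-invariant subset is a union of some of these orbits. Hence there is a subset $S\subseteq \mathcal{W}/\mathcal{W}_{\Theta }$ with $\mathcal{S}_{w}^{\Theta }=\coprod_{u\in S}N\cdot ub_{\Theta }$, and it remains to identify $S$ with $\{u:u\leq w\}$.

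For the inclusion $\{u:u\leq w\}\subseteq S$: if $u\leq w$, then Proposition \ref{BC-order} gives $\mathcal{S}_{u}^{\Theta }\subseteq \mathcal{S}_{w}^{\Theta }$, and since $N\cdot ub_{\Theta }\subseteq \mathrm{cl}(N\cdot ub_{\Theta })=\mathcal{S}_{u}^{\Theta }$ we conclude $N\cdot ub_{\Theta }\subseteq \mathcal{S}_{w}^{\Theta }$, i.e. $u\in S$. For the reverse inclusion $S\subseteq \{u:u\leq w\}$: if $u\in S$ then $N\cdot ub_{\Theta }\subseteq \mathcal{S}_{w}^{\Theta }$; taking closures and using that $\mathcal{S}_{w}^{\Theta }$ is closed yields $\mathcal{S}_{u}^{\Theta }=\mathrm{cl}(N\cdot ub_{\Theta })\subseteq \mathcal{S}_{w}^{\Theta }$, whence $u\leq w$ by Proposition \ref{BC-order}. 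Combining the two inclusions gives $\mathcal{S}_{w}^{\Theta }=\cup _{u\leq w}N\cdot ub_{\Theta }$.

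Since everything reduces to Proposition \ref{BC-order}, there is no serious obstacle; the only point deserving a line of care is the consistency of the statement at the level of cosets $\mathcal{W}/\mathcal{W}_{\Theta }$ — that $u_{1}\mathcal{W}_{\Theta }=u_{2}\mathcal{W}_{\Theta }$ implies $N\cdot u_{1}b_{\Theta }=N\cdot u_{2}b_{\Theta }$, and that the Bruhat--Chevalley order descends to minimal coset representatives — which has already been recorded in the discussion preceding the statement.
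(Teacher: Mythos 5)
Your argument is correct. Note, though, that the paper does not actually prove this proposition: both Proposition \ref{BC-order} and Proposition \ref{BC-order-1} are recalled as classical facts with a pointer to \cite{DKV83} and \cite{War72}, so there is no in-paper proof to compare against. Within that framework your derivation is sound: $\mathcal{S}_{w}^{\Theta }=\mathrm{cl}(N\cdot wb_{\Theta })$ is $N$-invariant, the Bruhat decomposition shows every $N$-invariant set is a union of the cells $N\cdot ub_{\Theta }$, and the two inclusions then follow from the two directions of Proposition \ref{BC-order} together with taking closures. The only caveat is one of logical economy rather than correctness: in the cited sources the substantive theorem is precisely that the closure of a Bruhat cell is the union of the cells indexed by smaller Weyl group elements, and the closure-inclusion criterion of Proposition \ref{BC-order} is essentially a reformulation of it; so your proof repackages one quoted fact as a consequence of the other rather than giving an independent argument (which would require, e.g., the $\mathrm{Sl}(2)$-reduction showing $\mathrm{cl}(N\cdot wb_{\Theta })\supset N\cdot ub_{\Theta }$ for $u\leq w$ and a dimension/constructibility argument for the reverse). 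You were also right to flag the coset issue: for partial flags the clean statement is that $\mathcal{S}_{w'}^{\Theta }\subset \mathcal{S}_{w}^{\Theta }$ iff some representative $u\in w'\mathcal{W}_{\Theta }$ satisfies $u\leq w$ (as the paper makes explicit in Section \ref{secpartial}), and with that reading your identification of the index set $S$ goes through.
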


In the forthcoming sections we will look carefully at the cellular
decompositions of the flag manifolds given by the Schubert cells. Before
going into them we present examples showing that classical cell
decompositions of compact groups are not well behaved with respect to
projections to flag manifolds.

\vspace{12pt}

\noindent
\textbf{Example:} 
In the cellular decomposition of $\mathrm{SO}\left(
3\right) $ of \cite{Whi44} and \cite{Mil53} there are $4$ cells of dimensions 
$0$, $1$, $2$ and $3$. The $2$-dimensional cell is given by the map $%
f:\mathbb{RP}^2 \rightarrow \mathrm{SO}\left( 3\right) $ given by $f\left( \left[ x\right]
\right) =r_{x}d$, $x\in \mathbb{R}^{3}\setminus \{0\}$, where $r_{x}$ is the
reflection in $\mathbb{R}^{3}$ with respect to the plane orthogonal to $x$
and $d=\mathrm{diag}\{1,1,-1\}$ needed to correct the determinant. This map
is viewed as a two-cell $B_{2}\rightarrow \mathrm{SO}\left( 3\right) $ by
taking the interior of the $2$-ball $B_{2}$ as the set $\{\left[ x\right]
\in \mathbb{RP}^{2}:x_{3}\neq 0\}$ where $x=\left( x_{1},x_{2},x_{3}\right) $%
. The boundary of $B_{2}$ is mapped to the $1$-dimensional cell which is the
image under $f$ of $\mathbb{RP}^{1}=\{\left[ \left( x_{1},x_{2},0\right) %
\right] \in \mathbb{RP}^{2}\}$. If $\{e_{1},e_{2},e_{3}\}$ is the standard
basis of $\mathbb{R}^{3}$ then $f\left( \left[ e_{1}\right] \right) =\mathrm{%
diag}\{-1,1,-1\}$, $f\left( \left[ e_{2}\right] \right) =\mathrm{diag}%
\{1,-1,-1\}$ and $f\left( \left[ e_{3}\right] \right) =\mathrm{id}$. These
three elements belong to the group $M$ where $\mathbb{F}=\mathrm{SO}\left(
3\right) /M$ is the maximal flag manifold of $\mathrm{Sl}\left( 3,\mathbb{R}%
\right) $. Hence the projection to $\mathbb{F}$ of the $2$-cell in $\mathrm{%
SO}\left( 3\right) $ is not a cell in $\mathbb{F}$ because $f\left( \left[
e_{i}\right] \right) $, $i=1,2,3$ are projected to the same point, namely
the origin of $\mathbb{F}$.

For other examples we recall the cellular decomposition of $\mathrm{SU}%
\left( n\right) $ given in \cite{Yok56}, Theorem 7.2, where the
positive dimensional cells have dimension $\geq 3$. Hence this construction
does not yield, by projection $\mathrm{SU}\left( n\right) \rightarrow 
\mathrm{SU}\left( n\right) /H$, a cellular decomposition of $\mathrm{SU}%
\left( n\right) /H$ if this homogeneous space has non trivial homology at
the levels $1$ or $2$. This happens, for instance, with the flag manifolds
of $\mathrm{Sl}\left( n,\mathbb{C}\right) $, that have nontrivial $H_{2}$.
Also, the maximal compact subalgebra of the split real form of the
exceptional type $E_{7}$ is $\mathfrak{su}\left( 8\right) $. However the
maximal flag manifold of a split real form has nontrivial fundamental group
(and hence $H_{1}$) as follows by Johnson \cite{Joh04} and \cite{Wig98}
. 

\section{Schubert cells in maximal flag manifolds\label{secschubertmax}}

In this section we give a detailed description of the Schubert cells in the
maximal flag manifolds. This description includes a parametrization by
compact groups (subsets of them) which allows explicit expressions for the
gluing maps between the cells. The partial flag manifolds will be trated in the 
Section \ref{secpartial}.

\subsection{Schubert cells and product of compact subgroups}

The main result here is a suitable parametrization for the Schubert cells
which is the basis for the computation of the boundary operator for the
cellular homology.

As before, $\mathbb{F}=G/P$ is the maximal flag manifold. We denote by $%
\mathbb{F}_{i}=G/P_{i}$ the partial flag manifolds where $P_{i}=P_{\{\alpha
_{i}\}}$, with $\alpha _{i}$ a simple root. The canonical fibration is $\pi
_{i}:\mathbb{F}\rightarrow \mathbb{F}_{i}$.

The Schubert cells are firstly described by the \textquotedblleft
fiber-exhausting\textquotedblright\ map $\gamma _{i}$ defined by 
\begin{equation*}
\gamma _{i}(X)=\pi _{i}^{-1}\pi _{i}(X)\quad,\quad X\subset \mathbb{F},
\end{equation*}%
that is, $\gamma _{i}\left( X\right) $ is the union of the fibers of $\pi
_{i}:\mathbb{F}\rightarrow \mathbb{F}_{i}$ crossing $X\subset \mathbb{F}$.
Notice that each $\gamma _{i}$ is an equivariant map, i.e., $g\gamma _{i}\left( X\right) =\gamma
_{i}\left( gX\right) $, for all $g\in G$ and $X\subset \mathbb{F}$, since the projections $%
\pi _{i}$ are equivariant maps.

For $w\in \mathcal{W}$, put $N^{w}=wNw^{-1}$.

Every Schubert cell is the image of some $g\in G$ of $\mathrm{cl}\left(
N^{w}wb_{0}\right) $. The following result was proved in \cite{San98}.

\begin{teo}
Let $w=r_{1}\cdots r_{n}$ be a reduced expression of $w\in \mathcal{W}$ as a
product of reflections with respect to the simple roots. Then, for any $%
k=1,\ldots ,n$, we have 
\begin{equation*}
\mathrm{cl}(N^{w}b_{0})=\gamma _{1}\cdots \gamma _{k}\left( \mathrm{cl}%
\left( N^{w}r_{1}\cdots r_{k}b_{0}\right) \right) .
\end{equation*}
\end{teo}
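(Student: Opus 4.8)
The plan is to prove the identity by induction on $k$, using the equivariance of the fiber-exhausting maps $\gamma_i$ together with the structure of the rank-one fibrations $\pi_i : \mathbb{F} \to \mathbb{F}_i$. The key geometric fact I would isolate first is this: for a single simple reflection $r_i$ and any $N^w$-invariant (or more generally, suitably invariant) subset, the fiber $\pi_i^{-1}\pi_i(b_0)$ is exactly $\mathrm{cl}(N_i^{-} b_0) \cup \{$stuff$\}$ where $N_i^-$ is the one-parameter (or rank-one) nilpotent piece generated by $\mathfrak{g}_{-\alpha_i}$; concretely, the fiber through $b_0$ is the sphere $P_i/P = S^{m_i}$, and $\gamma_i(X)$ replaces each point of $X$ by the whole fiber through it. So the base case $k=1$ amounts to the claim $\mathrm{cl}(N^w b_0) = \gamma_1(\mathrm{cl}(N^w r_1 b_0))$, i.e. that saturating $\mathrm{cl}(N^w r_1 b_0)$ by $\pi_1$-fibers recovers $\mathrm{cl}(N^w b_0)$. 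This should follow from the fact that $N^w b_0$ and $N^w r_1 b_0$ project to the same $N$-orbit in $\mathbb{F}_1$ (since $r_1$ lies over the identity coset in $\mathbb{F}_1 = G/P_1$, as $r_1 \in P_1$), so they lie in the same union of fibers, and one checks the saturation of the smaller-dimensional closure is the larger closure by a dimension count using the Bruhat cell dimension formula recalled in the excerpt.

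For the inductive step, I would assume $\mathrm{cl}(N^w b_0) = \gamma_1 \cdots \gamma_{k-1}(\mathrm{cl}(N^w r_1 \cdots r_{k-1} b_0))$ and try to show $\gamma_k$ applied to $\mathrm{cl}(N^w r_1 \cdots r_k b_0)$ gives $\mathrm{cl}(N^w r_1 \cdots r_{k-1} b_0)$ — then applying $\gamma_1 \cdots \gamma_{k-1}$ to both sides finishes. But $\gamma_k$ saturates by $\pi_k$-fibers through $b_0$-type points, and $r_1 \cdots r_{k-1} b_0$ is not the origin, so I would first conjugate: write $r_1 \cdots r_k b_0 = (r_1\cdots r_{k-1}) r_k b_0$ and use equivariance $\gamma_k(g X) = g \gamma_k(X)$ is \emph{not} directly what is needed since the relevant fibration over the point $r_1\cdots r_{k-1}b_0$ is a translate of $\pi_k$. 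The cleaner route is to reindex: use the already-proven base-case statement (the $k=1$ case, applied to the reduced word $r_k \cdots r_n$ or to the element $r_1\cdots r_k$ viewed appropriately) and the fact that $\gamma_i$'s for different $i$ do not commute but compose in the controlled way dictated by the reduced word. Concretely I expect the argument in \cite{San98} proceeds by a single induction peeling off $r_k$ at a time, each step being the $k=1$ statement transported by left multiplication by $r_1\cdots r_{k-1}$, using that $\gamma_k$ is intertwined appropriately.

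The main obstacle, I expect, is handling the non-commutativity and making the conjugation bookkeeping precise: $\gamma_k$ is defined via the \emph{fixed} fibration $\pi_k$, so when I want to saturate fibers "through the point $r_1\cdots r_{k-1}b_0$" I must be careful that $\gamma_k(\mathrm{cl}(N^w r_1\cdots r_k b_0))$ really equals $\mathrm{cl}(N^w r_1\cdots r_{k-1}b_0)$ and not some larger set. The reason it works is that $N^w = wNw^{-1}$ is adapted to $w$: the relevant root spaces are arranged so that $N^w r_1 \cdots r_{k-1} b_0$ already contains, for each of its points $x$, "half" of the $\pi_k$-fiber through $x$ in a way that closing up under $N^w$ and taking $\mathrm{cl}$ yields the full saturation. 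I would verify this using $\Pi_w = \{\alpha_1, r_1\alpha_2, \ldots, r_1\cdots r_{n-1}\alpha_n\}$ (recalled in the Notation section) to track exactly which root spaces of $N^w$ act nontrivially on $r_1\cdots r_{k-1}b_0$ in the fiber direction, together with the reducedness of the expression which guarantees $\ell(r_1\cdots r_k) = \ell(r_1\cdots r_{k-1}) + 1$ so that the dimension increases by exactly $m_{\alpha_k}$ at each stage, matching the fiber dimension of $\pi_k$. Since the statement is explicitly attributed to \cite{San98}, I would cite that for the technical heart and present here only the inductive skeleton plus the base case verification.
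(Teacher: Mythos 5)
This theorem is not proved in the paper at all: it is quoted verbatim from \cite{San98}, so there is no internal argument to compare against. Your proposal, by your own admission, does the same thing at the decisive moment (``I would cite that for the technical heart''), which means that what you have written is an inductive skeleton around a hole, not a proof. The whole content of the statement is the single-step identity $\gamma _{k}\left( \mathrm{cl}\left( N^{w}r_{1}\cdots r_{k}b_{0}\right) \right) =\mathrm{cl}\left( N^{w}r_{1}\cdots r_{k-1}b_{0}\right) $; once that is established for each $k$, the theorem follows by trivially composing the maps. You state this step, worry about it, and then defer it.

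Two concrete problems with the parts you do sketch. First, the justification offered for the $k=1$ case --- ``they lie in the same union of fibers'' plus ``a dimension count'' --- cannot give equality of sets: same projection only gives $\gamma _{1}(N^{w}b_{0})=\gamma _{1}(N^{w}r_{1}b_{0})$, and a dimension count never identifies a saturation with a closure. What is actually needed is (i) that saturation commutes with closure, $\gamma _{k}(\mathrm{cl}\,X)=\mathrm{cl}(\gamma _{k}X)$, which follows from $\pi _{k}$ being an open, proper fibration with compact fiber; (ii) the fiberwise rank-one Bruhat decomposition $\pi _{k}^{-1}\left( N^{w}\cdot r_{1}\cdots r_{k}b_{k}\right) =\left( N^{w}\cdot r_{1}\cdots r_{k-1}b_{0}\right) \,\dot{\cup}\,\left( N^{w}\cdot r_{1}\cdots r_{k}b_{0}\right) $, i.e. the conjugate-by-$w$ analogue of Lemma \ref{Schubert_lema}, where reducedness of $w=r_{1}\cdots r_{n}$ (so that $\ell (w^{-1}r_{1}\cdots r_{k-1})=\ell (w^{-1}r_{1}\cdots r_{k})+1$) is what decides which of the two orbits is the open one; and (iii) the inclusion $N^{w}\cdot r_{1}\cdots r_{k}b_{0}\subset \mathrm{cl}\left( N^{w}\cdot r_{1}\cdots r_{k-1}b_{0}\right) $, so that the saturated closure collapses to a single closure. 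None of (i)--(iii) appears in your argument. Second, the obstacle you flag about ``the relevant fibration over the point $r_{1}\cdots r_{k-1}b_{0}$ being a translate of $\pi _{k}$'' is a red herring: $\gamma _{k}$ saturates by fibers of the fixed map $\pi _{k}$ through \emph{every} point of the set, and the fiber through $r_{1}\cdots r_{k-1}b_{0}$ is an honest $\pi _{k}$-fiber (since $r_{k}b_{k}=b_{k}$); with the equivariance $g\gamma _{k}(X)=\gamma _{k}(gX)$ the stage-$k$ step is formally identical to the base case, no reindexing or conjugation bookkeeping required. So the route you outline can be made to work, but as written the proposal asserts rather than proves its key lemma and misdiagnoses where the difficulty lies.
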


In particular, for $k=n$ we have 
\begin{equation}
\mathrm{cl}(N^{w}b_{0})=\gamma _{1}\cdots \gamma _{n}\left( \mathrm{cl}%
\left( wNw^{-1}wb_{0}\right) \right) =\gamma _{1}\cdots \gamma _{n}\{wb_{0}\}
\label{forSchubert0}
\end{equation}%
because $Nb_{0}=b_{0}$. From this equality we get the following expression for the Schubert cell $%
\mathcal{S}_{w}$.

\begin{coro}
\label{corschubexaust}Let $w=r_{1}\ldots r_{n}$ be a reduced expression as a
product of reflections with respect to the simple roots in $\Sigma $. Then, 
\begin{equation*}
\mathcal{S}_{w}=\gamma _{n}\cdots \gamma _{1}\{b_{0}\}
\end{equation*}%
(Note that the order of the indexes is reversed.)
\end{coro}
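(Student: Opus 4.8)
The plan is to deduce Corollary \ref{corschubexaust} directly from Theorem (the San Martin result) and from the translation invariance built into the Bruhat decomposition. First I would recall the key fact from \eqref{forSchubert0}: for a reduced expression $w=r_1\cdots r_n$ one has $\mathrm{cl}(N^wb_0)=\gamma_1\cdots\gamma_n\{wb_0\}$. Next I would observe that the Schubert cell $\mathcal{S}_w=\mathrm{cl}(N\cdot wb_0)$ and the set $\mathrm{cl}(N^w b_0)$ are related by the element $w\in M^*$: since $N^w=wNw^{-1}$, we have $w^{-1}(N^w w b_0)=Nw^{-1}wb_0=Nb_0$, wait — more directly, $N^w b_0 = wNw^{-1}b_0$, and since $w^{-1}b_0$ is an $A$-fixed point, this is not literally $\mathcal{S}_w$; instead I would use that $\mathcal{S}_w = \mathrm{cl}(N\cdot wb_0)$ and $w^{-1}\mathcal{S}_w = \mathrm{cl}(w^{-1}N w \cdot b_0) = \mathrm{cl}(N^{w^{-1}} b_0)$. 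So applying \eqref{forSchubert0} to $w^{-1}$ (whose reduced expression is $r_n\cdots r_1$) gives $\mathrm{cl}(N^{w^{-1}}b_0)=\gamma_1\cdots\gamma_n\{w^{-1}b_0\}$, and then left-translating by $w$ and using equivariance of each $\gamma_i$ yields $\mathcal{S}_w = w\gamma_1\cdots\gamma_n\{w^{-1}b_0\} = \gamma_1\cdots\gamma_n\{wb_0\}$.

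Hmm — that recovers the formula but not the reversed order. Let me reconsider: the cleaner route is to note that $\gamma_i$ is idempotent-like and that $\gamma_i\{b_0\}=\mathrm{cl}(N^{r_i}r_ib_0)=\mathcal{S}_{r_i}$ is exactly the fiber $P_i/P$ through $b_0$. I would build up $\mathcal{S}_w$ by induction on $n=\ell(w)$. The base case $n=0$ is $\mathcal{S}_e=\{b_0\}$. For the inductive step, write $w=w'r_n$ with $w'=r_1\cdots r_{n-1}$ reduced of length $n-1$; by induction $\mathcal{S}_{w'}=\gamma_{n-1}\cdots\gamma_1\{b_0\}$. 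The crucial geometric claim is that $\mathcal{S}_w=\gamma_n(\mathcal{S}_{w'})$, i.e. saturating the Schubert cell of $w'$ by the fibers of $\pi_n$ produces the Schubert cell of $w=w'r_n$. Granting this, $\mathcal{S}_w=\gamma_n(\gamma_{n-1}\cdots\gamma_1\{b_0\})=\gamma_n\cdots\gamma_1\{b_0\}$, which is precisely the reversed-order statement. To prove the claim I would translate: $\gamma_n(\mathcal{S}_{w'})=\gamma_n(\mathrm{cl}(N w' b_0))$, and using equivariance of $\gamma_n$ and continuity, this equals $\mathrm{cl}(N\gamma_n(w'b_0))=\mathrm{cl}(N\cdot w'P_n/P)=\mathrm{cl}(N w' P_n b_0)$; one then checks $N w' P_n b_0 = N w' r_n b_0 \cup N w' b_0$ (the two $N$-orbits in the fiber $w'P_n/P$), and since $\ell(w'r_n)=\ell(w')+1$ with $w'<w'r_n$, Proposition \ref{BC-order-1} identifies the closure of $Nw'r_nb_0$ as containing $Nw'b_0$, giving exactly $\mathcal{S}_w$.

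Alternatively, and probably more efficiently for the write-up, I would simply invoke Corollary's companion Theorem applied with $k=n$ but to the inverse word, carefully tracking the relation $\mathcal{S}_w = \mathrm{cl}(N\cdot wb_0)$ versus $\mathrm{cl}(N^w b_0)=\mathrm{cl}(wNw^{-1}b_0)$. The point is that $\pi_i$ commutes with left $G$-translation, so $\gamma_i$ does too, and the composite $\gamma_1\cdots\gamma_n$ applied to the singleton $\{wb_0\}$ can be reindexed: one shows by a direct manipulation that $\gamma_1\cdots\gamma_n\{wb_0\}=\gamma_n\cdots\gamma_1\{b_0\}$ when $w=r_1\cdots r_n$, by peeling off $\gamma_1$ from the left of the first expression and absorbing $r_1$, using $\gamma_1\{r_1 x\}=\gamma_1\{x\}$ for any $x$ in the fiber picture together with $\gamma_1(\gamma_1(Y))=\gamma_1(Y)$.

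The main obstacle is the reindexing/reversal of order: Theorem as stated gives $\mathcal{S}_w=\gamma_1\cdots\gamma_n\{wb_0\}$ with $\gamma$'s in the \emph{same} order as the reduced word read left to right, whereas the Corollary wants $\gamma_n\cdots\gamma_1\{b_0\}$. Bridging these two requires using that each $\gamma_i$ is idempotent ($\gamma_i^2=\gamma_i$, since saturating twice by the same fibration changes nothing) and equivariant, so that one can successively transfer a reflection $r_j$ from inside the argument out past the already-saturated fibers — i.e. establishing the identity $\gamma_{i_1}\cdots\gamma_{i_k}\{r_{i_k}b_0\}=\gamma_{i_1}\cdots\gamma_{i_k}\{b_0\}$ and iterating. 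I expect the subtlety to be bookkeeping which $\gamma_i$ absorbs which $r_j$, and making sure no reduced-word hypothesis is silently needed beyond what Theorem already assumes.
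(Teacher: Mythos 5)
Your inductive route is correct and genuinely different from the paper's argument, but your assessment of the ``direct'' route is mistaken, and the ``main obstacle'' you describe does not exist. The paper's proof is exactly your first attempt done with the right bookkeeping: since $N^{w^{-1}}=w^{-1}Nw$, one has $\mathcal{S}_{w}=\mathrm{cl}(N\cdot wb_{0})=w\,\mathrm{cl}(N^{w^{-1}}b_{0})$, and applying (\ref{forSchubert0}) to $w^{-1}$, whose reduced word is $r_{n}\cdots r_{1}$, gives $\mathrm{cl}(N^{w^{-1}}b_{0})=\gamma _{n}\cdots \gamma _{1}\{w^{-1}b_{0}\}$ --- the $\gamma$'s in (\ref{forSchubert0}) are indexed by the successive letters of the reduced word being used, so for $w^{-1}$ they come out already in the reversed order, not as $\gamma_{1}\cdots \gamma_{n}$ as you wrote. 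Then equivariance $g\gamma _{i}(X)=\gamma _{i}(gX)$ turns $w\,\gamma _{n}\cdots \gamma _{1}\{w^{-1}b_{0}\}$ into $\gamma _{n}\cdots \gamma _{1}\{b_{0}\}$ (not into anything involving $\{wb_{0}\}$), which is precisely the corollary. Relatedly, the identity you attribute to the Theorem in your last paragraph, $\mathcal{S}_{w}=\gamma _{1}\cdots \gamma _{n}\{wb_{0}\}$, is false: the Theorem computes $\mathrm{cl}(N^{w}b_{0})=w\,\mathcal{S}_{w^{-1}}$, which is not $\mathcal{S}_{w}$ in general, so no idempotence/absorption machinery is needed or appropriate.

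Your second route --- induction on $\ell(w)$ via the claim $\mathcal{S}_{w^{\prime }r_{n}}=\gamma _{n}(\mathcal{S}_{w^{\prime }})$ when $\ell(w^{\prime }r_{n})=\ell(w^{\prime })+1$ --- is a valid alternative, provided you supply two points you currently gloss. First, $\gamma _{n}(\mathrm{cl}(Nw^{\prime }b_{0}))=\mathrm{cl}(\gamma _{n}(Nw^{\prime }b_{0}))$ is not just ``continuity'': you need that $\pi _{n}$ is both closed (compactness) and open (bundle projection) so that $\gamma _{n}=\pi _{n}^{-1}\pi _{n}$ commutes with closures. Second, the decomposition $\gamma _{n}(Nw^{\prime }b_{0})=\pi _{n}^{-1}(N\cdot w^{\prime }b_{n})=(N\cdot w^{\prime }r_{n}b_{0})\,\dot{\cup }\,(N\cdot w^{\prime }b_{0})$ is exactly Lemma \ref{Schubert_lema}; its proof is independent of the corollary, so there is no circularity, but it should be cited or reproved rather than left as ``one then checks''. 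With those points filled in, your induction plus Proposition \ref{BC-order-1} gives a complete proof; what the paper's route buys instead is brevity, since the whole statement reduces to one translation by $w$ plus equivariance, with no induction and no closure bookkeeping.
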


\begin{proof}
We have $\mathrm{cl}(Nw\cdot b_{0})=w\left( \mathrm{cl}(N^{w^{-1}}b_{0})%
\right) $, hence by (\ref{forSchubert0}) with $w^{-1}=r_{n}\cdots r_{1}$
instead of $w$ we have 
\begin{equation*}
\mathcal{S}_{w}=w\gamma _{n}\cdots \gamma _{1}(w^{-1}b_{0})=\gamma
_{n}\cdots \gamma _{1}\{b_{0}\},
\end{equation*}%
where the last equality follows by equivariance.
\end{proof}

Now we change slightly the above expression in terms of exhausting-fiber
maps to get the Schubert cells as unions of successive orbits of the
parabolic subgroups $P_{i}$. This construction is in the same spirit as the
Bott-Samelson dessingularization (see \cite{DKV83}).

It starts with the remark that the fiber $\gamma _{i}\{b_{0}\}$ of $\pi _{i}:%
\mathbb{F}\rightarrow \mathbb{F}_{i}$ through the origin is the orbit $%
P_{i}\cdot b_{0}$. In general, the fiber through $g\cdot b_{0}\in \mathbb{F}$
is given by $g\cdot \gamma _{i}\{b_{0}\}$ by equivariance of $\gamma _{i} $.
Now, if we have two iterations $\gamma _{2}\gamma _{1}$, then by equivariance we
get 
\begin{eqnarray}
\gamma _{2}\gamma _{1}\{b_{0}\} &=&\gamma _{2}\left( \bigcup_{g\in
P_{1}}g\cdot b_{0}\right)  \label{union} \\
&=&\left( \bigcup_{g\in P_{1}}g\cdot \gamma _{2}(b_{0})\right)  \notag \\
&=&\left( \bigcup_{g\in P_{1}}g\cdot \left( P_{2}b_{0}\right) \right)  \notag
\\
&=&P_{1}P_{2}\cdot b_{0}.  \notag
\end{eqnarray}

Proceeding successively by induction, we obtain 
\begin{equation*}
\mathcal{S}_{w}=\gamma _{n}\cdots \gamma _{1}\{b_{0}\}=P_{1}\cdots
P_{n}\cdot b_{0},
\end{equation*}%
where the indexes of $P_{1}\cdots P_{n}$ is the same as those of minimal
decomposition $w=r_{i}\ldots r_{n}\in \mathcal{W}$.

The same expression still holds with the compact $K_{i}=K\cap P_{i}$ instead
of $P_{i}$. In fact, $K_{i}\cdot b_{0}=P_{i}\cdot b_{0}$ by the Langlands
decomposition $P_{i}=K_{i}AN$ and $AN\cdot b_{0}$. Hence the same arguments
yield the following description of the Schubert cells.

\begin{prop}
\label{Schubert_2}Let $w=r_{1}\cdots r_{n}$ be a reduced expression as a
product of reflections with respect to the simple roots in $\Sigma $. Then, 
\begin{equation*}
\mathcal{S}_{w}=K_{1}\cdots K_{n}\cdot b_{0}.
\end{equation*}%
(Here, different from Corollary \ref{corschubexaust}, the indexes of the $%
r_{i}$'s and $K_{i}$'s are in the same order).
\end{prop}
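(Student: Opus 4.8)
The plan is to derive Proposition \ref{Schubert_2} from Corollary \ref{corschubexaust} and the fiber-union computation \eqref{union} by replacing each minimal parabolic $P_i$ with its maximal compact $K_i=K\cap P_i$. The key observation, already noted in the paragraph preceding the statement, is that the fiber of $\pi_i:\mathbb{F}\to\mathbb{F}_i$ through the origin can be described using either $P_i$ or $K_i$: indeed $\gamma_i\{b_0\}=P_i\cdot b_0$, and by the Langlands decomposition $P_i=K_iAN$ together with $AN\cdot b_0=b_0$ we get $P_i\cdot b_0=K_iAN\cdot b_0=K_i\cdot b_0$. So the fiber through $b_0$ is $K_i\cdot b_0$, and by equivariance of $\gamma_i$ the fiber through any $g\cdot b_0$ is $g\cdot(K_i\cdot b_0)$.

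Next I would run the same inductive union argument as in \eqref{union}, but starting from $K_i$-orbits instead of $P_i$-orbits. Concretely, $\gamma_1\{b_0\}=K_1\cdot b_0$; then
\begin{equation*}
\gamma_2\gamma_1\{b_0\}=\gamma_2\Bigl(\bigcup_{g\in K_1}g\cdot b_0\Bigr)=\bigcup_{g\in K_1}g\cdot\gamma_2\{b_0\}=\bigcup_{g\in K_1}g\cdot(K_2\cdot b_0)=K_1K_2\cdot b_0.
\end{equation*}
Iterating, $\gamma_n\cdots\gamma_1\{b_0\}=K_1\cdots K_n\cdot b_0$, where the bookkeeping of indices is the point that requires care: Corollary \ref{corschubexaust} gives $\mathcal{S}_w=\gamma_n\cdots\gamma_1\{b_0\}$ with reversed index order relative to the reduced expression $w=r_1\cdots r_n$, yet peeling off the \emph{innermost} $\gamma_1$ first produces $K_1$ as the leftmost factor, so that the product $K_1\cdots K_n$ ends up in the \emph{same} order as $r_1\cdots r_n$. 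I would state this explicitly, since it is exactly the discrepancy flagged in the parenthetical remark of the proposition.

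The only genuine subtlety — and the step I would treat most carefully — is the equivariance manipulation $\gamma_i(\bigcup_g g\cdot X)=\bigcup_g g\cdot\gamma_i(X)$ when $X$ is an orbit rather than a single point. This is immediate from the set-theoretic identity $\gamma_i$ commutes with unions (it is defined as $\gamma_i(Y)=\pi_i^{-1}\pi_i(Y)$, and preimages and images both distribute over unions) combined with the pointwise equivariance $g\gamma_i\{b_0\}=\gamma_i\{g\cdot b_0\}$ already recorded. So $\gamma_i\bigl(\bigcup_{g\in K_1\cdots K_{j-1}}g\cdot b_0\bigr)=\bigcup_g\gamma_i\{g\cdot b_0\}=\bigcup_g g\cdot(K_i\cdot b_0)=K_1\cdots K_{j-1}K_i\cdot b_0$, which closes the induction. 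Everything else is the verbatim argument already laid out for the $P_i$ version, so the proof is short: establish $\gamma_i\{b_0\}=K_i\cdot b_0$ via Langlands, then induct using Corollary \ref{corschubexaust}, watching the index order.
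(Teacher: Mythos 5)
Your proof is correct and follows essentially the same route as the paper: Corollary \ref{corschubexaust} plus the equivariant fiber-union computation of (\ref{union}), with $P_i\cdot b_0$ replaced by $K_i\cdot b_0$ via the Langlands decomposition $P_i=K_iAN$ and $AN\cdot b_0=b_0$. The only difference is cosmetic — you substitute $K_i$ for $P_i$ before running the induction, whereas the paper inducts with the $P_i$'s and swaps to the $K_i$'s at the end — and your index bookkeeping is the same as the paper's.
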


\noindent
\textbf{Remark:} In general, there is more than one reduced expression for $%
w\in \mathcal{W}$, which provides distinct compact subgroups $K_{i}$ and
distinct parametrizations.

\vspace{12pt}
\noindent
\textbf{Example:} Let $G=\mathrm{Sl}(n,\mathbb{R})$ with $\mathfrak{g}=%
\mathfrak{s}\mathfrak{l}(n,\mathbb{R})$. The simple roots are given by $%
\alpha _{i,i+1}=\lambda _{i}-\lambda _{i+1}$. The compact group $K_{i}$
associated to the simple root $\alpha _{i,i+1}$ is given by the rotations

\begin{equation*}
R_{i}^{t}=\exp (tA_{i,i+1})=\left( 
\begin{array}{cccccc}
1 &  &  &  &  &  \\ 
& \ddots &  &  &  &  \\ 
&  & \cos t & \sin t &  &  \\ 
&  & -\sin t & \cos t &  &  \\ 
&  &  &  & \ddots &  \\ 
&  &  &  &  & 1 \\ 
&  &  &  &  & 
\end{array}%
\right)
\end{equation*}%
where $A_{i,i+1}=E_{i,i+1}-E_{i+1,i}$. In this case, a Schubert cell has the
form 
\begin{equation*}
\mathcal{S}_{w}=R_{i_{1}}^{t_{1}}\cdots R_{i_{m}}^{t_{m}}\cdot b_{0},
\end{equation*}%
that is, is the image of the map $(t_{1},\ldots ,t_{m})\mapsto
R_{i_{1}}^{t_{1}}\cdots R_{i_{m}}^{t_{m}}\cdot b_{0}\in \mathbb{F}$.

Continuing with the example, let $n=3$, with $\mathcal{W}$ the permutation
group in three letters. The Schubert cell $\mathcal{S}_{(13)}$ is the whole
flag $\mathbb{F}_{1,2}^{3}$ since $(13)$ is the principal involution. If we
decompose $(13)=(12)(23)(12)$, $\mathcal{S}_{(13)}$ may be parametrized as: 
\begin{equation*}
\left( 
\begin{array}{ccc}
\cos t_{1} & \sin t_{1} &  \\ 
-\sin t_{1} & \cos t_{1} &  \\ 
&  & 1 \\ 
&  & 
\end{array}%
\right) \left( 
\begin{array}{ccc}
1 &  &  \\ 
& \cos t_{2} & \sin t_{2} \\ 
& -\sin t_{2} & \cos t_{2} \\ 
&  & 
\end{array}%
\right) \left( 
\begin{array}{ccc}
\cos t_{3} & \sin t_{3} &  \\ 
-\sin t_{3} & \cos t_{3} &  \\ 
&  & 1 \\ 
&  & 
\end{array}%
\right) \cdot b_{0}.
\end{equation*}%
If we choose to write $(13)=(23)(12)(23)$ we parametrize $\mathcal{S}_{(13)}$
as: 
\begin{equation*}
\left( 
\begin{array}{ccc}
1 &  &  \\ 
& \cos t_{1} & \sin t_{1} \\ 
& -\sin t_{1} & \cos t_{1} \\ 
&  & 
\end{array}%
\right) \left( 
\begin{array}{ccc}
\cos t_{2} & \sin t_{2} &  \\ 
-\sin t_{2} & \cos t_{2} &  \\ 
&  & 1 \\ 
&  & 
\end{array}%
\right) \left( 
\begin{array}{ccc}
1 &  &  \\ 
& \cos t_{3} & \sin t_{3} \\ 
& -\sin t_{3} & \cos t_{3} \\ 
&  & 
\end{array}%
\right) \cdot b_{0}.
\end{equation*}

In these examples the parameter $t_{i}$ range in the interval $[0,\pi ]$
because $R_{i}^{\pi }\cdot b_{0}=b_{0}$ for any $i$ ($b_{0}=\left(
V_{1}\subset V_{2}\right) $ where $V_{1}$ is the one dimensional subspace of 
$\mathbb{R}^{3}$ spanned by the first basic vector and $V_{2}$ is spanned by
the first two basic vectors). This is a general feature since our cell maps
will be defined in cubes $\left[ 0,\pi \right] ^{m}$.

\subsection{Bruhat cells inside the Schubert cell}

The next results determine the points of a Schubert cell $\mathcal{S}%
_{w}=K_{1}\cdots K_{n}\cdot b_{0}$ which are in the corresponding Bruhat cell $%
N\cdot wb_{0}$. 

\begin{lema}
\label{Schubert_lema} Let $w=r_{1}\cdots r_{n-1}r_{n}$ a minimal
decomposition. Define $v=wr_{n}=r_{1}\cdots r_{n-1}$. Let the parabolic
subgroup $P_{n}=P_{\{\alpha _{n}\}}$ with $r_{n}$ the reflection with
respect to $\alpha _{n}$ and $\mathbb{F}_{n}=G/P_{n}$. Let $\pi _{n}:\mathbb{%
F}\rightarrow \mathbb{F}_{n}$ be the canonical projection and denote by $%
b_{n}$ the origin of $G/P_{n}$. Then we have the disjoint union%
\begin{equation}  \label{main_fiber_equation}
\pi _{n}^{-1}(N\cdot wb_{n})=(N\cdot wb_{0})\,\dot{\cup}\,(N\cdot vb_{0})
\end{equation}
\end{lema}

\begin{proof}
The fiber $\pi _{n}^{-1}(wb_{n})$ is the flag manifold of a rank one group.
Its Bruhat decomposition reads 
\begin{equation*}
\pi _{n}^{-1}(wb_{n})=\{vb_{0}\}\,\dot{\cup}\,\left( \pi
_{n}^{-1}(wb_{n})\cap (N\cdot wb_{0})\right) .
\end{equation*}%
Indeed, in $\pi _{n}^{-1}(wb_{n})$ there are a $0$-cell which is $\{vb_{0}\}$
and an open cell. This latter one is $\pi _{n}^{-1}(wb_{n})\cap (N\cdot
wb_{0})$ because it is contained in the Bruhat cell $N\cdot wb_{0}$ and $%
vb_{0}\notin N\cdot wb_{0}$.

The result follows by acting $N$. In fact, $wb_0 \in \pi_n^{-1}(wb_n)
\cap (N \cdot wb_0)$, hence $N \cdot wb_0 = N \left( \pi_n^{-1}(wb_n) \cap
(N \cdot wb_0) \right)$. Also, by equivariance of $\pi_n$ we get $N
\pi_n^{-1}(wb_n) = \pi_n^{-1}(N\cdot wb_n)$. Then, 
\begin{equation*}
\pi_n^{-1}(N\cdot wb_n) = N \left( \{ vb_0 \} \, \dot{\cup} \, (
\pi_n^{-1}(wb_n) \cap (N \cdot wb_0) ) \right) = ( N \cdot vb_0 ) \, \dot{%
\cup} \, (N \cdot wb_0).
\qedhere
\end{equation*}
\end{proof}

We notice that Equation (\ref{main_fiber_equation}) is equivalent to 
\begin{eqnarray*}
\pi_n^{-1}(N \cdot wb_n) &=& \mathcal{S}_v \, \dot{\cup} \, (N \cdot wb_0).
\end{eqnarray*}
because $\pi_n^{-1}(N \cdot wb_n) \cap \mathcal{S}_v = N\cdot vb_0$ and $%
\pi_n^{-1}(N \cdot wb_n) = \pi_n^{-1}(N \cdot vb_n)$ since $wb_n = vb_n$ in $%
\mathbb{F}_n$.

\begin{prop}
\label{propSchubert_3}Write $\mathcal{S}_{w}=K_{1}\cdots K_{n}\cdot b_{0}$.
Take $b=u_{1}\cdots u_{n}\cdot b_{0}$, with $u_{i}\in K_{i}$. Then $b\in 
\mathcal{S}_{w}\setminus N\cdot wb_{0}$ if and only if $u_{i}\in M$ for some 
$i=1,\ldots ,n$.

In other words, an element $b \in \mathcal{S}_{w}$ is inside the Bruhat cell 
$N\cdot wb_{0}$ if and only if there is no $u_{i}\in M$.
\end{prop}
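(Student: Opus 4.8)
The plan is to argue by induction on $n$, using Lemma~\ref{Schubert_lema} together with the description $\mathcal{S}_w = K_1 \cdots K_n \cdot b_0$ to peel off the last factor $K_n$. Recall from Lemma~\ref{Schubert_lema} (and the remark following it) that if $w = r_1 \cdots r_n$ and $v = r_1 \cdots r_{n-1}$, then $\pi_n^{-1}(N \cdot w b_n) = \mathcal{S}_v \,\dot\cup\, (N \cdot w b_0)$. Since $\mathcal{S}_w = K_1 \cdots K_n \cdot b_0$ is the union of the fibers of $\pi_n$ through $\mathcal{S}_v = K_1 \cdots K_{n-1} \cdot b_0$ (this is essentially Equation~\eqref{union} read with $K$'s), we have $\mathcal{S}_w = \pi_n^{-1}(\pi_n(\mathcal{S}_v))$, and $\pi_n(\mathcal{S}_v)$ is the Bruhat/Schubert cell $\mathrm{cl}(N \cdot v b_n) = \mathrm{cl}(N \cdot w b_n)$ in $\mathbb{F}_n$ (here $vb_n = wb_n$). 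Hence $\mathcal{S}_w \setminus (N \cdot w b_0)$ decomposes as the part lying over $N \cdot w b_n$, which by the Lemma is exactly $\mathcal{S}_v$, together with the part lying over $\mathrm{cl}(N\cdot wb_n) \setminus (N \cdot w b_n)$, which is $\pi_n^{-1}$ of a lower-dimensional Schubert cell in $\mathbb{F}_n$, hence equals $K_1 \cdots K_{n-1} \cdot (K_n \cdot b_0) $ restricted over that smaller base — and is itself of the form $\mathcal{S}_{v'}$ for $v' < v$, to which one can apply induction.

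Concretely, I would split into two cases for a point $b = u_1 \cdots u_n \cdot b_0 \in \mathcal{S}_w$. First suppose $u_n \notin M$. Then $u_n \cdot b_0 \neq b_0$, so $u_n \cdot b_0$ lies in the open Bruhat cell of the rank-one fiber $\pi_n^{-1}(b_n) = P_n \cdot b_0$; conjugating appropriately and using equivariance, $\pi_n(b) = \pi_n(u_1 \cdots u_{n-1} \cdot b_0)$, so $b$ lies over a point of $\pi_n(\mathcal{S}_v)$, and in fact $b \in \pi_n^{-1}(N \cdot wb_n)$ precisely when $u_1 \cdots u_{n-1} \cdot b_0 \in N \cdot v b_0$, i.e. (by the inductive hypothesis applied to $\mathcal{S}_v$) precisely when no $u_i \in M$ for $i \le n-1$. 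Combined with $u_n \notin M$, this shows: if all $u_i \notin M$ then $b$ lies over $N \cdot wb_n$ but is not the zero-section point, hence $b \in N \cdot w b_0$; whereas if some $u_i \in M$ with $i \le n-1$ then $u_1\cdots u_{n-1}\cdot b_0 \in \mathcal{S}_v \setminus (N\cdot vb_0)$, so $b$ lies over $\mathrm{cl}(N\cdot wb_n)\setminus(N\cdot wb_n)$ and thus $b \notin N \cdot wb_0$. Second, suppose $u_n \in M$: then $u_n$ normalizes $N$ and fixes $b_0$, so $u_1 \cdots u_n \cdot b_0 = u_1 \cdots u_{n-1} \cdot (u_n \cdot b_0) = u_1 \cdots u_{n-1} \cdot b_0 \in \mathcal{S}_v = \mathcal{S}_{wr_n}$, and since $\ell(v) < \ell(w)$ we have $\mathcal{S}_v \cap (N \cdot w b_0) = \emptyset$ (the Bruhat cell $N\cdot wb_0$ is not contained in the lower cell $\mathcal{S}_v$), so $b \notin N \cdot wb_0$. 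This dispatches both directions of the "if and only if."

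The base case $n = 0$ (or $n=1$) is immediate: $\mathcal{S}_{r_1} = K_1 \cdot b_0$ is the rank-one sphere, $N \cdot r_1 b_0$ is its open cell, and the complement is $\{b_0\}$, attained exactly at $u_1 \in M$ (since $M = M^* \cap K_1$ is precisely the stabilizer issue in the fiber, $u_1 \cdot b_0 = b_0 \iff u_1 \in (K_1)_{b_0}$, and one checks $(K_1)_{b_0} \cap K_1 = M \cap K_1 = M$ up to the relevant identification). I expect the main obstacle to be the bookkeeping in the first case: one must verify carefully that "$b$ lies over $N \cdot wb_n$ in $\mathbb{F}_n$" is equivalent to the stated condition on $u_1, \dots, u_{n-1}$, which requires knowing that $\pi_n$ maps $K_1 \cdots K_{n-1} \cdot b_0$ onto $\pi_n(\mathcal{S}_v)$ bijectively over the open Bruhat cell — equivalently that $\pi_n$ restricted to $N \cdot v b_0$ is injective onto $N \cdot v b_n$, which holds because $\ell(vr_n) = \ell(w) > \ell(v)$ means $\alpha_n \notin \Pi_{v}$ so the fiber direction is transverse to $\mathcal{S}_v$. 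I would isolate this transversality as a small preliminary observation (it also underlies Lemma~\ref{Schubert_lema}) and then the induction runs cleanly. A secondary subtlety is making sure the conjugation $N^w$ versus $N$ is handled consistently when passing between "$\mathrm{cl}(N^w b_0)$" and "$N \cdot wb_0 = w\,\mathrm{cl}(N^{w^{-1}}b_0)$" as in Corollary~\ref{corschubexaust}; I would fix the convention at the outset to match Proposition~\ref{Schubert_2}.
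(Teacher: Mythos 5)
Your proposal is correct, and on the substantive direction (no $u_i\in M$ implies $b\in N\cdot wb_0$) it is essentially the paper's own argument: induct on the length, set $x=u_1\cdots u_{n-1}\cdot b_0$, note $\pi_n(b)=\pi_n(x)$, invoke Lemma \ref{Schubert_lema} to place $b$ in $(N\cdot vb_0)\,\dot{\cup}\,(N\cdot wb_0)$, and rule out $N\cdot vb_0$ because that cell meets the fiber of $\pi_n$ in the single point $x$, while $b\neq x$ since $u_n\notin M$ (using $K_n\cap P=M$). Where you genuinely differ is the easy direction. The paper handles it in one stroke with no induction: if some $u_i\in M$, it conjugates $u_i$ past $u_{i+1},\ldots,u_n$ (possible because $mK_jm^{-1}=K_j$ for $m\in M\subset K_j$) and uses $u_ib_0=b_0$ to rewrite $b$ as a point of $K_1\cdots \widehat{K_i}\cdots K_n\cdot b_0\subset \bigcup_{u<w}\mathcal{S}_u=\mathcal{S}_w\setminus N\cdot wb_0$. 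You instead fold this direction into the same induction, splitting on whether $u_n\in M$ (collapse it and land in $\mathcal{S}_v$) or some earlier $u_i\in M$ (apply the inductive hypothesis to $x$ and push down by $\pi_n$); this works, but it requires exactly the bookkeeping you flagged, namely that $x\in\mathcal{S}_v\setminus N\cdot vb_0$ forces $\pi_n(x)\notin N\cdot wb_n$, which rests on the disjointness of Bruhat cells in $\mathbb{F}_n$ together with the remark that no $u<v$ lies in the coset $v\mathcal{W}_{\{\alpha_n\}}$ (since $vr_n=w$ has length $n>\ell(u)$). The paper's conjugation trick buys you that case for free and keeps the appeal to the fibration confined to Lemma \ref{Schubert_lema}; your version is a bit longer but makes the fibration picture uniform across both directions. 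One small slip in your opening plan: the part of $\mathcal{S}_w$ lying over $\mathrm{cl}(N\cdot wb_n)\setminus N\cdot wb_n$ is in general a union of Schubert cells, not a single $\mathcal{S}_{v'}$; your concrete case analysis never uses that claim, so nothing is affected.
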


\begin{proof}
Suppose that $u=u_{i}\in M$ for some $i$. Then $u\in K_{j}$ for all $j$,
since $M\subset K_{j}$, so that $v_{j}=uu_{j}u^{-1}\in K_{j}$. Hence $b$ can
be rewritten as $b=u_{1}\cdots u_{i-1}v_{i+1}\cdots v_{n}u\cdot b_{0}$.
Since $ub_{0}=b_{0}$, it follows that $b\in \mathcal{S}_{v}$, with $%
v=r_{1}\cdots \hat{r_{i}}\cdots r_{n}$, which implies that $b\notin N\cdot
wb_{0}$ since $v<w$ and $\mathcal{S}_{w}\setminus N\cdot wb_{0}=\cup _{u<w}%
\mathcal{S}_{u}$.

For the converse we use induction on the length of $w$. If $w = r_1$ has
length one, then the Schubert cell is $\mathcal{S}_{r_1} = {b_0} \cup
(N\cdot u_1b_0)$. So if $u_1\notin M $, then $u_1b_0 \neq b_0$ and hence $%
u_1\cdot b_0 \in N\cdot u_1b_0$.

For $n>1$, let $b=u_{1}\cdots u_{n}\cdot b_{0}$ with $u_{i}\notin M$. We
must show that $b\in N\cdot wb_{0}$. Put $x=u_{1}\cdots u_{n-1}\cdot b_{0}$.
Note that $b\neq x$ for otherwise $u_{n}b_{0}=b_{0}$ which gives $u_{n}\in M$%
, contradicting the assumption.

The induction hypothesis says that $x \in N \cdot vb_0$, $v=r_1 \cdots
r_{n-1}$. Moreover, $\pi_n(b_0) = \pi_n(u_nb_0)$ which implies that $%
\pi_n(x) = \pi_n(b)$, that is, $x$ and $b$ are in the same fiber of $\pi_n$.
Hence $\pi_n(b) \in \pi_n(N\cdot wb_n)$, so that by Lemma \ref{Schubert_lema}%
, $b \in (N\cdot vb_0) \cup (N\cdot wb_0)$.

Now $b \notin N\cdot vb_0$ for otherwise $b = x$. In fact, as $\pi_n(b) =
\pi_n(x) = zb_n $, for some $z \in N$, we have $b \in \pi_n^{-1}(zb_n) \cap
N \cdot vb_0 $. Since this intersection reduces to ${\ zb_0 }$ we have $x =
zb_0$, because $x \in N \cdot vb_0$. Hence $b \in N\cdot wb_0$, concluding
the proof.
\end{proof}

\subsection{Parametrization of subsets of compact subgroups}

The next step is to find subsets of the subgroups $K_{i}$ that cover $%
\mathcal{S}_{w}=K_{1}\cdots K_{n}\cdot b_{0}$ and thus find parametrizations
of the cells.

The fiber $P_{i}/P$ of the projection $\mathbb{F}\rightarrow \mathbb{F}_{i}$
is the flag of the rank one Lie group $G(\alpha )$ whose Lie algebra is $%
\mathfrak{g}(\alpha )$, generated by $\mathfrak{g}_{\pm \alpha }$. The flag
manifold $\mathbb{F}_{\alpha }$ of $G(\alpha )$ is a sphere $S^{m}$ with
dimension $m=\dim \mathfrak{s}_{\alpha }-1$ where $\mathfrak{s}_{\alpha }$
is the symmetric part of the Cartan decomposition of $\mathfrak{g}\left(
\alpha \right) $. If $\{1,w\}$ is the Weyl group of $G\left( \alpha \right) $
and $b_{0}$ is the origin of $\mathbb{F}$ then $b_{0}$ and $wb_{0}$ are
antipodal points in $S^{m}$. The parametrization we seek is provided by the
following lemma whose general proof is only sketched below. In the sequel we
write down the details for the case when $\dim \mathfrak{g}_{\alpha }=1$ and 
$\mathfrak{g}_{2\alpha }=\{0\}$ so that $\mathfrak{g}\left( \alpha \right)
\approx \mathfrak{sl}\left( 2,\mathbb{R}\right) $.

\begin{lema}
\label{parametrization}Let $G=G(\alpha )$ be a real one rank group of rank
with maximal compact subgroup $K=K_{\alpha }$ and the corresponding flag
manifold $\mathbb{F}=S^{m}$ with origin $b_{0}$. Let $B^{m}$ be the closed
ball in $\mathbb{R}^{m}$. Then, there exists a continuous map $\psi
:B^{m}\rightarrow K$ such that

\begin{itemize}
\item $\psi (S^{m-1}) \subset M$ and hence $\psi(S^{m-1}) \cdot b_0 = b_0$.

\item If $x\in B^{m}\setminus S^{m-1}$, then $\psi (x)\cdot wb_{0}$ is a
diffeomorphism onto the Bruhat cell which is the complement of $b_0$.
\end{itemize}
\end{lema}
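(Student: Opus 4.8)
The plan is to recognise that, $G=G(\alpha)$ having real rank one, the flag $\mathbb{F}=S^{m}$ is the homogeneous space $K/M$ via the orbit map $q\colon K\to K/M=\mathbb{F}$, $k\mapsto k\cdot b_{0}$, and that its Bruhat decomposition is simply $\mathbb{F}=\{b_{0}\}\,\dot{\cup}\,(N\cdot wb_{0})$, with $N\cdot wb_{0}=\mathbb{F}\setminus\{b_{0}\}$ the open $m$-cell and $\{b_{0}\}$ its unique lower cell. So it suffices to produce a continuous lift through $q$ of a characteristic map $(B^{m},S^{m-1})\to(\mathbb{F},\{b_{0}\})$ of this $m$-cell: if $q\circ\psi$ is such a map, then automatically $\psi(S^{m-1})\subset q^{-1}(b_{0})=M$ and $x\mapsto\psi(x)\cdot b_{0}$ restricts on $B^{m}\setminus S^{m-1}$ to a diffeomorphism onto $\mathbb{F}\setminus\{b_{0}\}=N\cdot wb_{0}$, the Bruhat cell. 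Both assertions of the lemma follow.

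I would first treat the case singled out in the statement, $\dim\mathfrak{g}_{\alpha}=1$ and $\mathfrak{g}_{2\alpha}=0$, so that $\mathfrak{g}(\alpha)\cong\mathfrak{sl}(2,\mathbb{R})$ and $m=1$, by an explicit one-parameter group. Here $\mathfrak{k}(\alpha)=\mathbb{R}X$ is one dimensional; normalise $X$ so that $\exp(tX)$ acts on $\mathbb{F}\cong S^{1}$ as a rotation with $\exp(\pi X)\cdot b_{0}=b_{0}$ and $\exp(tX)\cdot b_{0}\neq b_{0}$ for $0<t<\pi$. Put $\psi(t)=\exp(tX)$ on $B^{1}=[0,\pi]$. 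Then $\psi(\{0,\pi\})=\{e,\exp(\pi X)\}\subset M$, since both fix $b_{0}$ and $\mathrm{Stab}_{K}(b_{0})=M$; this is the first bullet. And $t\mapsto\psi(t)\cdot b_{0}$ is an injective immersion of $(0,\pi)$ which never meets $b_{0}$, hence a diffeomorphism of $(0,\pi)$ onto $S^{1}\setminus\{b_{0}\}=N\cdot wb_{0}$, which is the second. This is exactly the computation in the spirit of the $\mathrm{Sl}(3,\mathbb{R})$ example, where $\psi(t)=R^{t}$ and $R^{t}\cdot b_{0}$ runs once around $\mathbb{RP}^{1}$ as $t$ goes from $0$ to $\pi$.

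For general $m$ I would obtain the lift abstractly. Let $\beta\colon\widehat{\mathbb{F}}\to\mathbb{F}$ be the spherical (oriented real) blow-up of $\mathbb{F}=S^{m}$ at $b_{0}$: a compact manifold with boundary, diffeomorphic to $B^{m}$, with $\partial\widehat{\mathbb{F}}=\beta^{-1}(b_{0})\cong S^{m-1}$ and $\beta$ restricting to a diffeomorphism of $\widehat{\mathbb{F}}\setminus\partial\widehat{\mathbb{F}}$ onto $\mathbb{F}\setminus\{b_{0}\}=N\cdot wb_{0}$. Since $\widehat{\mathbb{F}}$ is contractible, $\beta$ lifts through the fibre bundle $q\colon K\to\mathbb{F}$ to a continuous map $\psi\colon\widehat{\mathbb{F}}\cong B^{m}\to K$ with $q\circ\psi=\beta$, i.e. $\psi(x)\cdot b_{0}=\beta(x)$; the two bullets then read off from the properties of $\beta$ above. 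The only real content here is the standard description of the spherical blow-up of a point in a closed manifold, so this route meets no genuine obstacle. If one wants $\psi$ explicitly — which the homology computation does not require, only $m=1$ occurring there — I would instead set $\psi(x)=\kappa\big(\exp(\tan(\tfrac{\pi}{2}|x|)\,u_{x})\cdot w\big)$ on the open ball, where $u_{x}=x/|x|$ under a linear identification $\mathfrak{g}_{\alpha}\oplus\mathfrak{g}_{2\alpha}\cong\mathbb{R}^{m}$ and $\kappa$ is the Iwasawa $K$-component of $G(\alpha)$; one checks $\psi(x)\cdot b_{0}=\exp(\tan(\tfrac{\pi}{2}|x|)u_{x})\cdot wb_{0}$ sweeps the cell (and $\psi(0)=w$ is well defined). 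The step I expect to be the main obstacle is then the continuous extension of $\psi$ to $S^{m-1}$ with values in $M$: one must show that $\kappa(\exp(tu)w)$ converges, as $t\to\infty$, to an element of $M$ depending continuously on $u$, which I would reduce to a single direction using the transitivity of $M$ on $S^{m-1}$ (two-point homogeneity of the rank-one space $\mathbb{F}$) — the step the paper only sketches.
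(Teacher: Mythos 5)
Your proof is correct, but it takes a genuinely different route from the paper's for the general (higher multiplicity) case. The paper proves the lemma by running through the classification of real rank-one algebras ($\mathfrak{so}(1,n)$, $\mathfrak{su}(1,n)$, $\mathfrak{sp}(1,n)$, with $F_{4}$ discarded) and writing explicit maps $\psi(\gamma,t)=e^{tA_{\gamma}}$ (with diagonal corrections $e^{D_{\theta}}$, $e^{D_{q}}$ in the complex and quaternionic cases) inside the standard compact subgroups $\mathrm{SO}(n)$, $\mathrm{SU}(n)$, $\mathrm{Sp}(n)$, giving full details only in the $\mathfrak{sl}(2,\mathbb{R})$ case that is later realized as $\psi(t)=\exp(tA_{\alpha})$ on $[0,\pi]$. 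You treat that one-dimensional case exactly as the paper does, but for general $m$ you replace the case-by-case matrices by an abstract lifting argument: choose a characteristic map $\beta:(B^{m},S^{m-1})\rightarrow(\mathbb{F},\{b_{0}\})$ (your spherical blow-up of $S^{m}$ at $b_{0}$, or any standard such map) and lift it through the principal $M$-bundle $q:K\rightarrow K/M=\mathbb{F}$, using that the pulled-back bundle over the contractible ball is trivial; then $\psi(S^{m-1})\subset q^{-1}(b_{0})=K\cap P=M$ and $\psi(\cdot)\cdot b_{0}=\beta$ is a diffeomorphism of the open ball onto the open Bruhat cell. This is sound (and it automatically yields the properties actually invoked later: boundary values in $M$, interior values off $M$, injectivity on the interior, and surjectivity onto the whole fiber), so it buys uniformity and avoids the classification, at the cost of losing the explicit formulas -- which the paper itself only needs in the multiplicity-one case that you, like the paper, make explicit. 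One remark: you read the second bullet as ``$x\mapsto\psi(x)\cdot b_{0}$ is a diffeomorphism onto the complement of $b_{0}$,'' whereas the statement literally says $\psi(x)\cdot wb_{0}$; your reading is the one consistent with Lemma \ref{compact_parametrization_1} (where $\psi(t)\cdot b_{0}$ sweeps the fiber) and with the use of the $\psi_{i}$'s in the characteristic maps $\Phi_{w}$, so this is a reasonable correction of an apparent slip in the statement rather than a gap. Your closing explicit alternative via the Iwasawa $K$-component is indeed the delicate point (boundary limits into $M$), but since your main argument does not rely on it, nothing is missing.
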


For the proof of \ the lemma recall that the following list exhaust the rank
one Lie algebras (see \cite{War72}, pages 30-32). In the list $d_{\alpha
}=\dim \mathfrak{g}_{\alpha }$ and $d_{2\alpha }=\dim \mathfrak{g}_{2\alpha
} $.

\begin{itemize}
\item $\mathfrak{so}\left( 1,n\right) $; $d_{\alpha }=n-1$, $d_{2\alpha }=0$%
; $\dim \mathfrak{s}=n$. (This class includes $\mathfrak{sl}\left( 2,\mathbb{%
R}\right) \approx \mathfrak{sp}\left( 1,\mathbb{R}\right) \approx \mathfrak{%
so}\left( 1,2\right) $, $\mathfrak{sl}\left( 2,\mathbb{C}\right) \approx 
\mathfrak{so}\left( 1,3\right) $ and $\mathfrak{su}^{\ast }\left( 4\right)
\approx \mathfrak{so}\left( 1,5\right) $.)

\item $\mathfrak{su}\left( 1,n\right) $; $d_{\alpha }=2\left( n-1\right) $, $%
d_{2\alpha }=1$; $\dim \mathfrak{s}=2n$. (This class includes $\mathfrak{so}%
^{\ast }\left( 6\right) \approx \mathfrak{su}\left( 1,3\right) $.)

\item $\mathfrak{sp}\left( 1,n\right) $; $d_{\alpha }=4\left( n-1\right) $, $%
d_{2\alpha }=3$; $\dim \mathfrak{s}=4n$.

\item A real form of the exceptional Lie algebra $F_{4}$; $d_{\alpha }=8$, $%
d_{2\alpha }=7$; $\dim \mathfrak{s}=16$.
\end{itemize}

The exceptional algebra $F_{4}$ does not appear as a $\mathfrak{g}\left(
\alpha \right) $ in any Lie algebra different from itself because apart
from $F_{4}$ the multiplicities $d_{2\alpha }$ are at most $3$ (see \cite{War72}, pages 30-32). Hence, we can discard it.

On the other hand the classical groups $\mathrm{SO}(1,n)$, $\mathrm{SU}(1,n)$
and $\mathrm{Sp}(1,n)$ contain the compact subgroups $\mathrm{SO}(n)$, $%
\mathrm{SU}(n)$ and $\mathrm{Sp}(n)$ whose actions on the respective flag
manifolds $S^{n-1}$, $S^{2n-1}$ and $S^{4n-2}$ are the standard ones coming
from the linear actions in $\mathbb{R}^{n}$, $\mathbb{C}^{n}$ and $\mathbb{H}%
^{n}$, respectively. In each case the origin $b_{0}$ of the flag is the
first basic vector $e_{1}$ while $wb_{0}=-e_{1}$. Now, take matrices 
\begin{equation*}
A_{\gamma }=\left( 
\begin{array}{cc}
0 & -\overline{\gamma }^{T} \\ 
\gamma & 0%
\end{array}%
\right)
\end{equation*}%
with $\gamma $ in $\mathbb{R}^{n-1}$, $\mathbb{C}^{n-1}$ and $\mathbb{H}%
^{n-1}$ respectively, such that $\left\Vert \gamma \right\Vert =1$. If $%
m=n-1 $, $2n-1$ or $4n-1$ then $U$ is one of the groups $\mathrm{SO}(n)$, $%
\mathrm{SU}(n)$ and $\mathrm{Sp}(n)$. If $U = \mathrm{SO}(n)$ then the map $%
\psi :S^{m-1}\times \left[ 0,2\pi \right] \rightarrow U$ given by $\psi
\left( \gamma ,t\right) =e^{tA_{\gamma }}$ satisfies the requirements of
Lemma \ref{parametrization}, because $e^{tA_{\gamma }}\cdot e_{1}=\cos
te_{1}+\sin t\widetilde{\gamma }$ where $\widetilde{\gamma }=\left( 0,\gamma
\right) $. The complex and quaternionic cases are made similarly with slight
modifications. If $U= \mathrm{SU}(n)$, let $B^{2n-2}=\{t\gamma :\left\Vert
\gamma \right\Vert =1$, $t\in \left[ 0,\pi \right] \}$ and define the map $%
\psi :B^{2n-2}\times \left[ -\pi ,\pi \right] \rightarrow \mathrm{SU}\left(
n\right) \subset K $ by $\psi \left( t\gamma ,\theta \right) =e^{tA_{\gamma
}}e^{D_{\theta }}$ where 
\begin{equation*}
D_{\theta }=\mathrm{diag} \left\{ i\theta ,-\frac{i}{n-1}\theta ,\ldots ,-%
\frac{i}{n-1}\theta \right\}.
\end{equation*}%
It follows that $\psi$ is the desired parametrization.
% since \psi \left( t\gamma ,\theta \right) \left( -\beta _{0}\right)=-e^{tA_{\gamma }}e^{i\theta }\beta _{0}$. 
If $U= \mathrm{Sp}(n)$, the map $\psi :B^{4n-4}\times B^{3}\rightarrow 
\mathrm{Sp}\left( n\right) \subset K $ that realizes the parametrization is
defined by $\psi \left( t\gamma ,q\right) =e^{tA_{\gamma }}e^{D_{q}}$, where 
$B^{4n-4}=\{t\gamma \in \mathbb{H}^{n-1}:\left\Vert \gamma \right\Vert =1$, $%
t\in \left[ 0,\pi \right] \}$, $B^{3}=\{q\in i\mathbb{H}:\left\Vert
q\right\Vert \leq \pi \}$ and 
\begin{equation*}
D_{q}=\mathrm{diag}\left\{q,-\frac{1}{n-1}q,\ldots ,-\frac{1}{n-1}q\right\}.
\end{equation*}

From now on we consider the case when $\dim \mathfrak{g}_{\alpha }=1$ and $%
\mathfrak{g}_{2\alpha }=\{0\}$, so that $\mathfrak{g}(\alpha )\approx 
\mathfrak{sl}(2,\mathbb{R})$ and compact Lie algebra of $K_{\alpha }$ is $%
\mathfrak{so}\left( 2\right) $. This is the only relevant case to the
computation of homology of the flag manifolds (c.f. Proposition \ref%
{propappendix}).

Let $\theta $ be the Cartan involution. Take $0\neq X_{\alpha }\in \mathfrak{%
g}_{\alpha }$ and $Y_{\alpha }=\theta (X_{\alpha })\in \mathfrak{g}_{-\alpha
}$ such that $\langle X_{\alpha },Y_{\alpha }\rangle =\frac{2}{\langle
\alpha ,\alpha \rangle }$. Hence, $[X_{\alpha },Y_{\alpha }]=H_{\alpha
}^{\vee }=\frac{2H_{\alpha }}{\langle \alpha ,\alpha \rangle }$. Denote by $%
A_{\alpha }=X_{\alpha }+Y_{\alpha }\in \mathfrak{k}$. The Lie algebra $%
\mathfrak{g}(\alpha )=\mathfrak{g}_{-\alpha }\oplus \langle H_{\alpha
}^{\vee }\rangle \oplus \mathfrak{g}_{\alpha }$ is isomorphic to $\mathfrak{s%
}\mathfrak{l}(2,\mathbb{R})$. Explicitly, write $\rho :\mathfrak{s}\mathfrak{%
l}(2,\mathbb{R})\rightarrow \mathfrak{g}(\alpha )$, with $\rho (H)=H_{\alpha
}^{\vee }$, $\rho (X)=X_{\alpha }$ and $\rho (Y)=Y_{\alpha }$ where%
\begin{eqnarray*}
H=\left( 
\begin{array}{cc}
1 & 0 \\ 
0 & -1%
\end{array}%
\right) ,\,\hspace{1cm}X=\left( 
\begin{array}{cc}
0 & -1 \\ 
0 & 0%
\end{array}%
\right) ,\,\hspace{1cm}Y=\left( 
\begin{array}{cc}
0 & 0 \\ 
1 & 0%
\end{array}%
\right) .
\end{eqnarray*}

This homomorphism extends to a homomorphism $\phi :\mathfrak{s}\mathfrak{l}%
(2,\mathbb{C})\rightarrow \mathfrak{g}_{\mathbb{C}}(\alpha )$. Note that $%
\mathrm{ad}\circ \phi $ is a representation of $\mathfrak{s}\mathfrak{l}(2,%
\mathbb{C})$ in $\mathfrak{g}_{\mathbb{C}}$. As $\mathrm{Sl}(2,\mathbb{C})$
is simply connected, this representation extends to a representation $\Phi $
of $\mathrm{Sl}(2,\mathbb{C})$ in $\mathfrak{g}_{\mathbb{C}}$ and they are
related by $e^{\mathrm{ad}\circ \phi (X)}=\Phi (\exp (X))$ for any $X\in 
\mathfrak{s}\mathfrak{l}(2,\mathbb{C})$. We have 
\begin{eqnarray*}
e^{\mathrm{ad}(\pi A_{\alpha })}=e^{\mathrm{ad}\circ \phi (A)}=\Phi (\exp
(\pi A)),
\end{eqnarray*}%
where $A=X+Y$. But in $\mathrm{Sl}(2,\mathbb{C})$ we have 
\begin{eqnarray*}
\exp (\pi A_{\alpha }^{\prime }) = \exp \left( 
\begin{array}{cc}
0 & -\pi \\ 
\pi & 0%
\end{array}%
\right) &=& \left( 
\begin{array}{cc}
1 & 0 \\ 
0 & -1%
\end{array}%
\right) \\
&=& \exp \left( 
\begin{array}{cc}
i\pi & 0 \\ 
0 & -i\pi%
\end{array}%
\right) = \exp (i\pi H).
\end{eqnarray*}%
Therefore, 
\begin{eqnarray}
e^{\mathrm{ad}(\pi A_{\alpha })} &=&\Phi (\exp ({i\pi H})) = e^{\mathrm{ad}%
\circ \phi (i\pi H)}  \notag \\
&=&e^{\mathrm{ad}(i\pi H_{\alpha }^{\vee })}.
\end{eqnarray}

Put 
\begin{eqnarray*}
m_{\alpha }=\exp (\pi iH_{\alpha }^{\vee })=\exp (\pi A_{\alpha }).
\end{eqnarray*}
Then $m_{\alpha }$ centralizes $A$ ($m_{\alpha }=\exp (\pi iH_{\alpha
}^{\vee })$) and belongs to $K$ ($m_{\alpha }=\exp (\pi A_{\alpha })$).
Hence $m_{\alpha }\in M=Z_{K}(\mathfrak{a})$.

Now consider the curve $\gamma (t)=\exp (tA_{\alpha })\cdot b_{0}$ in the
fiber of $\mathbb{F}\rightarrow G/P_{\alpha }$ through the origin. Since $%
m_{\alpha }\in M$, $\gamma (\pi )=m_{\alpha }b_{0}=b_{0}$. Actually $\gamma
(t)$ covers the fiber in the interval $[0,\pi ]$.

In $\mathfrak{s}\mathfrak{l}(2,\mathbb{R})$ we have that%
\begin{eqnarray*}
\mathrm{Ad}(e^{tA})H &=&\left( 
\begin{array}{cc}
\cos t & -\sin t \\ 
\sin t & \cos t%
\end{array}%
\right) \left( 
\begin{array}{cc}
1 & 0 \\ 
0 & -1%
\end{array}%
\right) \left( 
\begin{array}{cc}
\cos t & \sin t \\ 
-\sin t & \cos t%
\end{array}%
\right) \\
&=&\left( 
\begin{array}{cc}
\cos 2t & \sin 2t \\ 
-\sin 2t & \cos 2t%
\end{array}%
\right) .
\end{eqnarray*}%
That is, $\mathrm{Ad}(e^{tA})H=-\sin 2tX+\cos 2tH+\sin 2tY$. Applying the
formula 
\begin{equation*}
\rho \left( \mathrm{Ad}(e^{tA})H\right) =\mathrm{Ad}(e^{tA})H_{\alpha
}^{\vee }
\end{equation*}
we get $\mathrm{Ad}(e^{tA})H_{\alpha }^{\vee }=-\sin 2tX_{\alpha }+\cos
2tH_{\alpha }^{\vee }+\sin 2tY_{\alpha }.$ This shows that $e^{tA}$
centralizes $H_{\alpha }^{\vee }$ if and only if $t=n\pi $. In particular, $%
e^{tA}\in M$ if and only if $t=n\pi $. Hence, the period of $\gamma $ is
exactly $\pi $. Summarizing,

\begin{lema}
\label{compact_parametrization_1} The one-dimensional version of the Lemma \ref%
{parametrization} is realized by 
\begin{eqnarray*}
\psi :[0,\pi ]\rightarrow K_{\alpha }\,,\,t\mapsto \exp (tA_{\alpha }).
\end{eqnarray*}%
In particular, $\psi (0)=1$ and $\psi (\pi )=m_{\alpha }=\exp (\pi A_{\alpha
})$.
\end{lema}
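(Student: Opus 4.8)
The statement is essentially a recapitulation of the computations carried out immediately above, so the plan is to assemble them into a verification of the two conditions of Lemma~\ref{parametrization} in the case $m=1$ — the relevant one here, since $d_\alpha=1$ and $d_{2\alpha}=0$. In this case the rank-one flag manifold $\mathbb{F}_\alpha$ is the circle $S^{1}$, and the ball $B^{1}$ is taken to be the interval $[0,\pi]$ with boundary $S^{0}=\{0,\pi\}$.

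First I would record the endpoint values of $\psi$. By definition $\psi(0)=\exp(0)=1$ and $\psi(\pi)=\exp(\pi A_\alpha)=m_\alpha$, and both lie in $M$: the identity trivially, and $m_\alpha$ by the identity $e^{\mathrm{ad}(\pi A_\alpha)}=e^{\mathrm{ad}(i\pi H_\alpha^{\vee})}$ obtained above from the $\mathrm{Sl}(2,\mathbb{C})$-representation $\Phi$, which shows that $\mathrm{Ad}(m_\alpha)$ fixes $\mathfrak{a}$ pointwise while $m_\alpha=\exp(\pi A_\alpha)\in K$, hence $m_\alpha\in Z_K(\mathfrak{a})=M$. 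Thus $\psi(S^{0})\subset M$, so $\psi(S^{0})\cdot b_0=b_0$, which is the first bullet of Lemma~\ref{parametrization}.

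For the second bullet I would analyse the orbit curve $\gamma(t)=\psi(t)\cdot b_0=\exp(tA_\alpha)\cdot b_0$ in the fiber $\mathbb{F}_\alpha=S^{1}$ through the origin. It is smooth, and $\gamma(0)=b_0=m_\alpha b_0=\gamma(\pi)$, so it closes up on $[0,\pi]$. From the computation $\mathrm{Ad}(e^{tA})H_\alpha^{\vee}=-\sin 2t\,X_\alpha+\cos 2t\,H_\alpha^{\vee}+\sin 2t\,Y_\alpha$ one reads off that $e^{tA_\alpha}$ centralizes $H_\alpha^{\vee}$ — equivalently lies in $M$ — exactly for $t\in\pi\mathbb{Z}$; hence $\gamma$ is injective on $(0,\pi)$. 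An injective smooth loop on $S^{1}$ traversed once is a diffeomorphism from $(0,\pi)$ onto the complement of its base point, so $\gamma|_{(0,\pi)}$ is a diffeomorphism onto $\mathbb{F}_\alpha\setminus\{b_0\}=N\cdot wb_0$, the open Bruhat cell. Together with the endpoint values, this establishes the one-dimensional version of Lemma~\ref{parametrization} for the stated $\psi$.

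The only step carrying genuine content is the determination that the period of $\gamma$ is \emph{exactly} $\pi$: this is what the formula for $\mathrm{Ad}(e^{tA})H_\alpha^{\vee}$ supplies, and without it one could not rule out that $\gamma$ already closes up for some $t<\pi$. Everything else is the identification $\mathbb{F}_\alpha\cong S^{1}$ together with the bookkeeping that the boundary sphere $S^{m-1}$ reduces to the two endpoints of $[0,\pi]$ — the feature that makes this the case feeding into the boundary operator, since the cell maps will later be built on cubes $[0,\pi]^{d}$ with one distinguished $[0,\pi]$-factor for each simple reflection whose root has multiplicity one.
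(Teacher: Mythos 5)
Your proposal is correct and follows essentially the same route as the paper, which proves this lemma precisely by the computations immediately preceding it: the endpoint identities $\psi(0)=1$, $\psi(\pi)=m_\alpha\in M=Z_K(\mathfrak{a})$ via $e^{\mathrm{ad}(\pi A_\alpha)}=e^{\mathrm{ad}(i\pi H_\alpha^{\vee})}$, and the formula $\mathrm{Ad}(e^{tA})H_\alpha^{\vee}=-\sin 2t\,X_\alpha+\cos 2t\,H_\alpha^{\vee}+\sin 2t\,Y_\alpha$ showing $e^{tA_\alpha}\in M$ exactly for $t\in\pi\mathbb{Z}$, so that the orbit curve has period exactly $\pi$ and parametrizes the circle fiber. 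Your assembly of these facts into the two bullets of Lemma \ref{parametrization} (with $[0,\pi]$ playing the role of $B^1$ and $\{0,\pi\}$ of $S^0$) matches the paper's argument.
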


\vspace{12pt}

\noindent

Moreover, if $X\in \mathfrak{g}_{\beta }$, then: 
\begin{eqnarray*}
\mathrm{Ad}(m_{\alpha })(X)=\mathrm{Ad}(\exp (\pi iH_{\alpha }^{\vee
}))(X)=e^{\mathrm{ad}(\pi iH_{\alpha }^{\vee })}(X)=e^{\pi i\epsilon (\alpha
,\beta )}(X),
\end{eqnarray*}%
where $\epsilon (\alpha ,\beta )=\frac{2\langle \alpha ,\beta \rangle }{%
\langle \alpha ,\alpha \rangle }$ is the Killing number. This implies that

\begin{lema}
\label{compact_parametrization_2} The root spaces $\mathfrak{g}_{\beta }$ are
invariant by the action of $\mathrm{Ad}(m_{\alpha })$ and 
\begin{eqnarray*}
\mathrm{Ad}(m_{\alpha })_{\left\vert \mathfrak{g}_{\beta }\right.
}=(-1)^{\epsilon (\alpha ,\beta )}\mathrm{id}.
\end{eqnarray*}
\end{lema}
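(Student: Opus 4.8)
The plan is to read off the statement directly from the eigenvalue computation already displayed just above the lemma, taking care only about two points: that the exponent is an integer, and that the resulting scalar being real lets us conclude invariance of the real root space $\fg_\beta$ (not merely of its complexification).

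First I would recall that $m_\alpha=\exp(\pi iH_\alpha^\vee)$, so that for any $X\in\fg_\beta$ (or in $(\fg_{\mathbb C})_\beta$) one has
\begin{equation*}
\mathrm{Ad}(m_\alpha)(X)=e^{\mathrm{ad}(\pi iH_\alpha^\vee)}(X)=e^{\pi i\,\beta(H_\alpha^\vee)}X,
\end{equation*}
since $H_\alpha^\vee$ acts on $\fg_\beta$ by the scalar $\beta(H_\alpha^\vee)$. Next I would identify $\beta(H_\alpha^\vee)=\tfrac{2\langle\alpha,\beta\rangle}{\langle\alpha,\alpha\rangle}=\epsilon(\alpha,\beta)$, the Killing (Cartan) integer of the pair $(\alpha,\beta)$; that this number is an integer is the standard consequence of $\mathfrak{sl}(2,\mathbb R)$-representation theory applied to the subalgebra $\fg(\alpha)$ acting on $\fg_{\mathbb C}$. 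Hence $e^{\pi i\epsilon(\alpha,\beta)}=(-1)^{\epsilon(\alpha,\beta)}\in\{\pm1\}$, giving
\begin{equation*}
\mathrm{Ad}(m_\alpha)\big|_{\fg_\beta}=(-1)^{\epsilon(\alpha,\beta)}\,\mathrm{id}.
\end{equation*}

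Finally, for the invariance claim: since $m_\alpha\in K\subset G=\mathrm{Aut}(\fg)$, the map $\mathrm{Ad}(m_\alpha)$ is a real automorphism of $\fg$, and the displayed identity shows it acts on $\fg_\beta$ as multiplication by the real scalar $(-1)^{\epsilon(\alpha,\beta)}$; in particular $\mathrm{Ad}(m_\alpha)$ carries $\fg_\beta$ into itself, so each real root space is invariant. (Equivalently: $\mathrm{Ad}(m_\alpha)$ permutes the root spaces of $\fg_{\mathbb C}$ by $\beta\mapsto\beta$ here, and since $\epsilon(\alpha,\beta)\in\mathbb Z$ the eigenvalue is real and the restriction to the real form $\fg_\beta$ makes sense.)

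There is no real obstacle here; the only point requiring a word of justification — which I would state explicitly rather than leave implicit — is the integrality of $\epsilon(\alpha,\beta)$, which is what turns the a priori complex eigenvalue $e^{\pi i\epsilon(\alpha,\beta)}$ into the sign $(-1)^{\epsilon(\alpha,\beta)}$ and thereby yields both the formula and the invariance simultaneously.
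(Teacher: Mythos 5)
Your proposal is correct and follows essentially the same route as the paper: both use $m_{\alpha }=\exp (\pi iH_{\alpha }^{\vee })$ to get $\mathrm{Ad}(m_{\alpha })(X)=e^{\mathrm{ad}(\pi iH_{\alpha }^{\vee })}(X)=e^{\pi i\epsilon (\alpha ,\beta )}X$ on $\mathfrak{g}_{\beta }$ and then invoke the integrality of the Killing number $\epsilon (\alpha ,\beta )$ to obtain the sign $(-1)^{\epsilon (\alpha ,\beta )}$. Your additional remarks on why integrality holds and why the real root space itself is invariant only make explicit what the paper leaves implicit.
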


\subsection{Gluing cells}

A Schubert cell $\mathcal{S}_{w}$ is obtained from smaller cells $\mathcal{S}%
_{v}$, $v<w$, by gluing a cell of dimension $\dim (N\cdot wb_{0})$. Once
this proccess is done for each $w\in \mathcal{W}$, we get a cellular
decomposition for $\mathbb{F}$ which is explictly given by characterisc maps
and attaching maps. (We follow the terminology of Hatcher \cite{Hat02}: the \textit{%
characteristic map} is defined in a closed ball while the \textit{attaching
map} is the restriction characteristic map to the boundary of the ball.)

In order to define a characteristic map for $\mathcal{S}_{w}$, $w\in 
\mathcal{W}$, we must choose a reduced expression 
\begin{equation*}
w=r_{1}\cdots r_{n}
\end{equation*}%
as product of simple reflections $r_{i}=r_{\alpha _{i}}$. We know that $%
\mathcal{S}_{w}=K_{1}\cdots K_{n}\cdot b_{0}$. By Lemma \ref{parametrization}%
, for each $i$, there exists $\psi _{i}:B^{d_{i}}\rightarrow K_{i}$, where $%
d_{i}$ is the dimension of the fiber of $\mathbb{F}\rightarrow \mathbb{F}%
_{i} $, that is, the dimension of the flag of $G(\alpha _{i})$.

Let $B_{w}=B^{d_{1}}\times \cdots \times B^{d_{n}}$ be the ball with
dimension $d=d_{1}+\cdots +d_{n}$. Then the characteristic map $\Phi
_{w}:B_{w}\rightarrow \mathbb{F}$ is defined by 
\begin{equation*}
\Phi _{w}(t_{1},\ldots ,t_{n})=\psi _{1}(t_{1})\cdots \psi _{n}(t_{n})\cdot
b_{0}.
\end{equation*}

\vspace{12pt}
\noindent
\textbf{Remark:} Distinct decompositions of $w$ yields different
characteristic maps, so the notation $\Phi _{w}$ should include the minimal
decomposition of $w$ (for example, $\Phi _{r_{1}\cdots r_{n}}$). To keep
this simpler notation we shall fix later a choice of minimal expressions for
each $w\in \mathcal{W}$.

\begin{prop}
\label{characteristic_map} Let $w=r_{1}\cdots r_{n}$ be a minimal
decomposition. Let $\Phi _{w}:B_{w}\rightarrow \mathbb{F}$ be the map
defined above and take $\mathbf{t}=(t_{1},\ldots ,t_{n})\in B_{w}$. Then, $%
\Phi _{w}$ is a characteristic map for $\mathcal{S}_{w}$, that is,

\begin{enumerate}
\item $\Phi_w (B_w) \subset \mathcal{S}_w$.

\item $\Phi _{w}(\mathbf{t})\in \mathcal{S}_{w}\setminus N\cdot wb_{0}$ if
and only if $\mathbf{t}\in \partial B_{w}=S^{d-1}$.

\item $\Phi |_{B_{w}^{\circ }}:B_{w}^{\circ }\rightarrow N\cdot wb_{0}$ is a
diffeormorphism ($B_{w}^{\circ }$ is the interior of $B_{w}$).
\end{enumerate}
\end{prop}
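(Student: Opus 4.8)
The plan is to verify the three claims in order, leaning heavily on Proposition \ref{propSchubert_3}, which already identifies which products $u_1\cdots u_n\cdot b_0$ land in the Bruhat cell versus the boundary, and on Lemma \ref{parametrization}, which describes the fiberwise behaviour of each $\psi_i$. For (1), since $\psi_i(B^{d_i})\subset K_i$ by Lemma \ref{parametrization}, we immediately get $\Phi_w(B_w)=\psi_1(B^{d_1})\cdots\psi_n(B^{d_n})\cdot b_0\subset K_1\cdots K_n\cdot b_0=\mathcal{S}_w$ by Proposition \ref{Schubert_2}.

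For (2), write $\mathbf{t}=(t_1,\dots,t_n)$ and $u_i=\psi_i(t_i)\in K_i$. If $\mathbf{t}\in\partial B_w$, then some $t_i\in S^{d_i-1}=\partial B^{d_i}$, so by the first bullet of Lemma \ref{parametrization} we have $u_i=\psi_i(t_i)\in M$; Proposition \ref{propSchubert_3} then gives $\Phi_w(\mathbf{t})\in\mathcal{S}_w\setminus N\cdot wb_0$. Conversely, if $\mathbf{t}\in B_w^{\circ}$, then every $t_i\in B^{d_i}\setminus S^{d_i-1}$. Here the key point is that $\psi_i(t_i)\notin M$ for $t_i$ in the open ball: by the second bullet of Lemma \ref{parametrization}, $\psi_i(t_i)\cdot wb_0$ is a diffeomorphism onto the open Bruhat cell of the rank-one flag $\mathbb F_{\alpha_i}$, in particular $\psi_i(t_i)\cdot b_{\alpha_i}\neq wb_{\alpha_i}$ is not an $A$-fixed point of that rank-one flag, which forces $\psi_i(t_i)\notin M$ (an element of $M$ would fix $b_0$ and hence act on the fiber as an element of $M\cap K_{\alpha_i}$, landing on an $A$-fixed point). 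With no $u_i\in M$, Proposition \ref{propSchubert_3} yields $\Phi_w(\mathbf{t})\in N\cdot wb_0$, so $\Phi_w(\mathbf{t})\notin\mathcal{S}_w\setminus N\cdot wb_0$. This proves the equivalence in (2); in passing it also shows $\Phi_w(B_w^{\circ})\subset N\cdot wb_0$, which is the set-theoretic half of (3).

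For (3), it remains to show $\Phi_w|_{B_w^{\circ}}\colon B_w^{\circ}\to N\cdot wb_0$ is a diffeomorphism. I would argue by induction on $n=\ell(w)$, using the fibration $\pi_n\colon\mathbb F\to\mathbb F_n$ and Lemma \ref{Schubert_lema}. Write $v=r_1\cdots r_{n-1}$ and $\mathbf t=(\mathbf t',t_n)$ with $\mathbf t'\in B_v^{\circ}$; set $x=\Phi_v(\mathbf t')\in N\cdot vb_0$ (bijectively, by induction). The point $\Phi_w(\mathbf t)=\psi_n(t_n)\cdot x$ lies in the $\pi_n$-fiber through $x$, which is $\psi_1(t_1')\cdots\psi_{n-1}(t_{n-1}')\cdot(P_n/P)$, a copy of the rank-one flag $\mathbb F_{\alpha_n}$; on that fiber $\psi_n(t_n)$ sweeps out the open Bruhat cell diffeomorphically by Lemma \ref{parametrization}. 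So $\Phi_w$ restricted to $B_v^{\circ}\times(B^{d_n}\setminus S^{d_n-1})$ is a bijection onto $N\cdot wb_0$: injectivity follows because $\pi_n\circ\Phi_w(\mathbf t',t_n)=\pi_n(x)$ recovers $\mathbf t'$ (composing with $\pi_n\colon N\cdot wb_0\to N\cdot vb_n$, which is the restriction of the fibration and is known to be a fiber bundle with fiber the open Bruhat cell of $\mathbb F_{\alpha_n}$), and then $t_n$ is recovered from the diffeomorphism of Lemma \ref{parametrization}. Smoothness of $\Phi_w$ and of its inverse is clear from the explicit exponential formulas for the $\psi_i$ together with smoothness of the Bruhat cell charts. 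The main obstacle is this last step: one must be careful that $\pi_n$ really restricts to a (smooth, surjective, submersive) fiber bundle $N\cdot wb_0\to N\cdot vb_n$ with the open Bruhat cell of the rank-one flag as fiber, so that the inductive reconstruction of $(\mathbf t',t_n)$ from $\Phi_w(\mathbf t',t_n)$ is legitimate and smooth; this is where Lemma \ref{Schubert_lema} and equivariance of $\pi_n$ do the real work.
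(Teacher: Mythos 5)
Your overall route coincides with the paper's: item (1) from Proposition \ref{Schubert_2}, item (2) reduced to Proposition \ref{propSchubert_3} plus the boundary/interior behaviour of the maps $\psi _{i}$, and item (3) by induction on $\ell (w)$ using the fibration $\pi _{n}$ together with the fact, extracted from Lemma \ref{Schubert_lema}, that $N\cdot vb_{0}$ meets each fiber of $\pi _{n}$ over $N\cdot vb_{n}$ in exactly one point; your fiberwise sweep even makes surjectivity onto $N\cdot wb_{0}$ explicit, which the paper only asserts.

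There is, however, one step whose justification fails as written. In (2) you argue that for interior $t_{i}$ the point $\psi _{i}(t_{i})\cdot wb_{0}$ lies in the open cell of the rank-one fiber, is therefore ``not an $A$-fixed point'', and hence $\psi _{i}(t_{i})\notin M$. This inference is wrong: $wb_{0}$ is itself an $A$-fixed point lying in that open cell (for instance $\psi _{i}(\pi /2)\cdot b_{0}=r_{i}b_{0}$ is $A$-fixed), and an element $m\in M$ sends $wb_{0}$ to $wb_{0}$, which is perfectly compatible with the second bullet of Lemma \ref{parametrization}; membership of the image point in the open cell gives no information about membership of the group element in $M$. What you actually need is the sharper property of the specific parametrizations, namely that $\psi _{i}(t_{i})\in M$ forces $t_{i}\in \partial B^{d_{i}}$. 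In the multiplicity-one case this is exactly Lemma \ref{compact_parametrization_1} together with the computation preceding it ($e^{tA_{\alpha }}\in M$ if and only if $t\in \pi \mathbb{Z}$), which is what the paper cites; in general it is part of the construction of $\psi $ in Lemma \ref{parametrization}, not a formal consequence of its two stated bullets. With that citation the argument closes. A second, harmless slip: $\Phi _{w}(\mathbf{t})$ is not $\psi _{n}(t_{n})\cdot x$; it equals $g\,\psi _{n}(t_{n})\cdot b_{0}$ with $g=\psi _{1}(t_{1})\cdots \psi _{n-1}(t_{n-1})$, which is what you in fact use when you observe that it lies in the $\pi _{n}$-fiber through $x=g\cdot b_{0}$.
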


\begin{proof}
The first condition holds by construction since $\psi_i(t_i) \in K_i$ and
hence $\Phi _{w}(t_{1},\ldots ,t_{n})=\psi _{1}(t_{1})\cdots \psi
_{n}(t_{n})\cdot b_{0}\in K_{1}\cdots K_{n}\cdot b_{0}=\mathcal{S}_{w}$.

The second statement follows as a consequence of the Proposition \ref{propSchubert_3} by which we have 
that $\psi _{1}(t_{1})\cdots \psi _{n}(t_{n})$ is not in $%
N\cdot wb_{0}$ if and only if some $\psi _{i}(t_{i})\in M$. By Lemma \ref%
{compact_parametrization_1}, this implies that $t_{i}\in \{0,\pi \}=\partial
S^{d_{i}-1}$, that is, $\mathbf{t}\in \partial S^{d-1}$.

Finally, we already have that $\Phi |_{B_{w}^{\circ }}$ is a surjective map.
Let us prove its injectivity by induction on the lenght $l\left( w\right) $
of $w$. If $l\left( w\right) =1$, this is Lemma \ref%
{compact_parametrization_1}. For $l\left( w\right) >1$, suppose that $\Phi
_{w}\left( \mathbf{t}\right) =\Phi _{w}\left( \mathbf{s}\right) $ with $%
\mathbf{t}=\left( t_{1},\ldots ,t_{n}\right) $ and $\mathbf{s}=\left(
s_{1},\ldots ,s_{n}\right) $ in $B_{w}^{\circ }$. Then we claim that $x=y$
where 
\begin{eqnarray*}
x &=&\psi _{1}(t_{1})\cdots \psi _{n-1}(t_{n-1})\cdot b_{0} \\
y &=&\psi _{1}(s_{1})\cdots \psi _{n-1}(s_{n-1})\cdot b_{0}.
\end{eqnarray*}%
In fact, the elements $\psi _{i}(t_{i})$ and $\psi _{i}(s_{i})$ are not in $%
M $, hence by Proposition \ref{propSchubert_3} both $x,y\in N\cdot vb_{0}$, $%
v=r_{1}\cdots r_{n-1}$. Also, $\pi _{n}\left( x\right) =\pi _{n}\left( \Phi
_{w}\left( \mathbf{t}\right) \right) =\pi _{n}\left( \Phi _{w}\left( \mathbf{%
s}\right) \right)= \pi _{n}\left( x\right) $, that is, $x$ and $y$ belong to
the same fiber of $\pi _{n}:\mathbb{F}\rightarrow \mathbb{F}_{n}$. It
follows by Lemma \ref{Schubert_lema} that $x=y$ since $N\cdot vb_{0}$ meets
each fiber of $\pi _{n}$ in a unique point. By the induction hypothesis $%
(t_{1},\ldots ,t_{n-1})=(s_{1},\ldots ,s_{n-1})$, so that $\psi
_{1}(t_{1})\cdots \psi _{n-1}(t_{n-1})=\psi _{1}(s_{1})\cdots \psi
_{n-1}(s_{n-1})$. Applying this to the equality $\Phi _{w}\left( \mathbf{t}%
\right) =\Phi _{w}\left( \mathbf{s}\right) $ we conclude that $\psi
_{n}(t_{n})\cdot b_{0}=\psi _{n}(s_{n})\cdot b_{0}$, which in turn implies
that $t_{n}=s_{n}$, $l\left( r_{n}\right) =1$. Therefore, $\Phi _{w}$ is a
closed continuous and bijective map, hence it is a homeomorphism
(differentiability comes from the construction of the maps $\psi _{i}$).
\end{proof}

As a consequence of the last item of the above proposition, we have the
following construction. Let $d=\dim \mathcal{S}_{w}=\dim N\cdot wb_{0}$. The
sphere $S^{d}$ is the quotient $B_{w}/\partial (B_{w})$ where the boundary
is collapsed to a point. We can do the same with the Schubert cell $\mathcal{%
S}_{w}$. Define $\sigma _{w}=S_{w}/(S_{w}\setminus N\cdot wb_{0})$, i.e.,
the space obtained by identifying the complement of the Bruhat cell $%
\mathcal{S}_{w}\setminus N\cdot wb_{0}$ in $\mathcal{S}_{w}$ to a point. As $%
\Phi _{w}(\partial (B_{w}))\subset S_{w}\setminus N\cdot wb_{0}$, it follows
that $\Phi _{w}$ induces a map $S^{d}\rightarrow \sigma _{w}$ which is a
homeomorphism. The inverse of this homeomorphism will be denoted by 
\begin{equation}
\Phi _{w}^{-1}:\sigma _{w}\rightarrow S^{d}  \label{Sphere}
\end{equation}%
(although this is not the same as the inverse of $\Phi _{w}$).

A very useful data is the determination of pairs $w,w^{\prime
}\in \mathcal{W}$ for which $w^{\prime} \leq w$ and $\dim \mathcal{S}_{w}-\dim \mathcal{S}%
_{w^{\prime }}=1$. 

\begin{prop}
\label{propappendix} Let $w,w^{\prime }\in \mathcal{W}$. The following
statements are equivalent.

\begin{enumerate}
\item $\mathcal{S}_{w^{\prime }}\subset \mathcal{S}_{w}$ and $\dim 
\mathcal{S}_{w}-\dim \mathcal{S}_{w^{\prime }}=1$.

\item If $w=r_{1}\cdots r_{n}$ is a reduced expression of $w\in \mathcal{%
W}$ as a product of simple reflections, then

\subitem (i) $w^{\prime }=r_{1}\cdots \hat{r_{i}}\cdots r_{n}$ is a reduced
expression.

\subitem (ii) If $r_{i}=r_{\alpha _{i}}$ then $\mathfrak{g}(\alpha
_{i})\cong \mathfrak{s}\mathfrak{l}(2,\mathbb{R})$. This is the same as
saying the fiber of $\mathbb{F}\rightarrow \mathbb{F}_{i}$ has dimension $1$.
\end{enumerate}
\end{prop}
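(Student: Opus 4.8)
The plan is to prove $(1)\Leftrightarrow(2)$ by combining the dimension formula for Bruhat cells with the combinatorics of the Bruhat--Chevalley order, and then identifying precisely when a unit drop in length produces a unit drop in cell dimension.

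First I would recall that, by Proposition~\ref{BC-order}, $\mathcal{S}_{w^{\prime }}\subset \mathcal{S}_{w}$ is equivalent to $w^{\prime }\leq w$, and that $\dim \mathcal{S}_w = \dim (N\cdot wb_0) = \sum_{\alpha \in \Pi_w} m_\alpha$. The strategy is to first show that if $\dim\mathcal{S}_w - \dim\mathcal{S}_{w^{\prime }}=1$ then necessarily $\ell(w)-\ell(w^{\prime })=1$: indeed, since $w^{\prime }<w$, we have $\Pi_{w^{\prime }}\subsetneq \Pi_w$ (this containment follows from $w^{\prime}\le w$ together with the characterization $\Pi_w=\Pi^+\cap w\Pi^-$, or from the interval property of the Bruhat order), so $\dim\mathcal{S}_w-\dim\mathcal{S}_{w^{\prime }}=\sum_{\alpha\in\Pi_w\setminus\Pi_{w^{\prime }}}m_\alpha\ge 1$ with equality forcing $|\Pi_w\setminus\Pi_{w^{\prime }}|=1$ and that single root $\alpha$ to have $m_\alpha=1$. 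Thus $\ell(w)=\ell(w^{\prime })+1$, and by the subword characterization of the order (definition~(2) of the order in the Notation section) a reduced expression $w=r_1\cdots r_n$ admits $w^{\prime }$ as a subword obtained by deleting exactly one letter, i.e.\ $w^{\prime }=r_1\cdots\widehat{r_i}\cdots r_n$ for some $i$, and this expression is automatically reduced since it has length $n-1=\ell(w^{\prime })$. This gives condition (i).

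Next I would pin down condition (ii). With $w=r_1\cdots r_n$ and $w^{\prime }=r_1\cdots\widehat{r_i}\cdots r_n$, the deleted root is exactly $\beta_i:=r_1\cdots r_{i-1}\alpha_i\in\Pi_w$, using the explicit description $\Pi_w=\{\alpha_1, r_1\alpha_2,\ldots, r_1\cdots r_{n-1}\alpha_n\}$ recalled in the Notation section; one checks $\Pi_{w^{\prime }}=\Pi_w\setminus\{\beta_i\}$ by writing out the analogous list for $w^{\prime }$ and comparing term by term (the terms before position $i$ are unchanged, and the terms after position $i$ match because $r_1\cdots r_{i-1}r_i r_j = r_1\cdots r_{i-1}\cdot(\text{appropriate reflection})$ shifts indices consistently). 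Since $m_{\beta_i}=m_{\alpha_i}$ (the multiplicity is constant along $\mathcal{W}$-orbits of roots, as $\mathrm{Ad}(w)$ maps $\mathfrak{g}_{\alpha_i}$ isomorphically onto $\mathfrak{g}_{\beta_i}$), the condition $m_{\beta_i}=1$ is equivalent to $m_{\alpha_i}=\dim\mathfrak{g}_{\alpha_i}=1$ together with $\mathfrak{g}_{2\alpha_i}=0$ (else $2\alpha_i$ would also contribute), which from the classification list of rank-one algebras is exactly $\mathfrak{g}(\alpha_i)\cong\mathfrak{sl}(2,\mathbb{R})$; and this in turn says the fiber $S^{d_i}$ of $\mathbb{F}\to\mathbb{F}_i$ has $d_i=\dim\mathfrak{s}_{\alpha_i}-1=1$. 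For the converse $(2)\Rightarrow(1)$ I would simply reverse this: assuming (i) and (ii), the computation $\Pi_{w^{\prime }}=\Pi_w\setminus\{\beta_i\}$ with $m_{\beta_i}=1$ yields $\dim\mathcal{S}_w-\dim\mathcal{S}_{w^{\prime }}=1$, and $w^{\prime }<w$ gives $\mathcal{S}_{w^{\prime }}\subset\mathcal{S}_w$ via Proposition~\ref{BC-order}.

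The main obstacle I anticipate is the bookkeeping showing $\Pi_{w^{\prime }}=\Pi_w\setminus\{\beta_i\}$ cleanly --- in particular verifying that deleting the $i$-th reflection removes precisely the root $\beta_i$ and leaves every other root of $\Pi_w$ intact, rather than permuting them. The cleanest route is probably to argue directly with $\Pi_w=\Pi^+\cap w\Pi^-$: one shows $w^{\prime }\le w$ with $\ell(w)=\ell(w^{\prime})+1$ forces $\Pi_{w^{\prime}}\subset\Pi_w$ with $|\Pi_w\setminus\Pi_{w^{\prime}}|=1$ (a standard fact about the Bruhat order, attributable to the cited Humphreys~\cite{Hum90}), which bypasses the need to manipulate the ordered list explicitly; then identify the unique extra root as $\beta_i$ by noting $r_1\cdots r_{i-1}\alpha_i\in\Pi_w$ but $\notin\Pi_{w^{\prime}}$. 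I would also need the elementary subword lemma (deletion condition) for the equivalence of $\ell(w)=\ell(w^{\prime})+1,\ w^{\prime}<w$ with "$w^{\prime}$ is obtained from a/any reduced word of $w$ by deleting one letter," again citing Humphreys. The multiplicity-invariance $m_{\alpha_i}=m_{\beta_i}$ is routine from $\mathrm{Ad}(w)\mathfrak{g}_\gamma=\mathfrak{g}_{w\gamma}$ and needs only a sentence.
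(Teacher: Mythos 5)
Your overall architecture (Proposition \ref{BC-order} plus the dimension formula plus the subword characterization of the order) is the same as the paper's, but the step you lean on twice is false: it is \emph{not} true that $w'\leq w$ in the Bruhat--Chevalley order forces $\Pi_{w'}\subset\Pi_w$, not even for a covering relation. Inversion sets are nested along the \emph{weak} order, not the strong order. Concrete counterexample in type $A_2$: $w=r_1r_2$ and $w'=r_2$ satisfy $w'<w$ with $\ell(w)-\ell(w')=1$, yet $\Pi_{w'}=\{\alpha_2\}$ while $\Pi_w=\{\alpha_1,\alpha_1+\alpha_2\}$, so $\Pi_{w'}\not\subset\Pi_w$. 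This breaks your derivation of $\ell(w)-\ell(w')=1$ from $\dim\mathcal{S}_w-\dim\mathcal{S}_{w'}=1$ (you write the dimension difference as $\sum_{\alpha\in\Pi_w\setminus\Pi_{w'}}m_\alpha$, which is not what it is), and the same error reappears in your claim that $\Pi_{w'}=\Pi_w\setminus\{\beta_i\}$ with the tail terms ``matching term by term'': in fact for $j>i$ the root $r_1\cdots r_{j-1}\alpha_j\in\Pi_w$ and the corresponding root $r_1\cdots\hat{r_i}\cdots r_{j-1}\alpha_j\in\Pi_{w'}$ differ by the reflection $r_\beta$, $\beta=r_1\cdots r_{i-1}\alpha_i$ (in the example above, $\alpha_1+\alpha_2$ versus $\alpha_2$). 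Your proposed ``cleanest route'' in the final paragraph is exactly this non-fact, so as written the implication $(1)\Rightarrow(2)$ is not established.

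The repair is short and is what the paper's proof actually uses: for \emph{any} reduced word $v=r_{j_1}\cdots r_{j_m}$ the roots of $\Pi_v$ are Weyl images of the simple letters, and multiplicities are $\mathcal{W}$-invariant (your own remark $m_{\beta_i}=m_{\alpha_i}$ via $\mathrm{Ad}$), hence $\dim\mathcal{S}_v=m_{\alpha_{j_1}}+\cdots+m_{\alpha_{j_m}}$. Now if $w'<w$, choose a reduced expression of $w'$ that is a subword of $w=r_1\cdots r_n$; then $\dim\mathcal{S}_w-\dim\mathcal{S}_{w'}$ is the sum of the multiplicities of the deleted letters, which is at least the number of deleted letters, namely $\ell(w)-\ell(w')\geq 1$. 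Equality with $1$ forces exactly one deleted letter $r_i$ with fiber dimension $m_{\alpha_i}$ (more precisely $\dim(\mathfrak{g}_{\alpha_i}+\mathfrak{g}_{2\alpha_i})$) equal to $1$, i.e.\ $\mathfrak{g}(\alpha_i)\cong\mathfrak{sl}(2,\mathbb{R})$; the converse direction then follows from the same identity. So keep your multiplicity-invariance observation, discard every appeal to containment of inversion sets, and route the dimension count through the letters of the reduced words rather than through $\Pi_w\setminus\Pi_{w'}$.
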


\begin{proof}
In fact, $\mathcal{S}_{w^{\prime }}\subset \mathcal{S}_{w}$ if and only if $%
w^{\prime }<w$ in the Bruhat-Chavalley order. In this case if $w=r_{1}\cdots
r_{n}$ and $w^{\prime }=r_{i_{1}}\cdots r_{i_{j}}$ are reduced expressions
then $\dim \mathcal{S}_{w}$ is $\dim \mathcal{S}_{w^{\prime }}$ plus the sum
of the multiplicities of the roots missing in the reduced expression for $%
w^{\prime }$. Hence $\dim \mathcal{S}_{w}-\dim \mathcal{S}_{w^{\prime }}=1$
if and only if $w^{\prime }=r_{1}\cdots \hat{r_{i}}\cdots r_{n}$ and $\alpha
_{i}$ has multiplicity $1$, that is, $\mathfrak{g}(\alpha _{i})\cong 
\mathfrak{s}\mathfrak{l}(2,\mathbb{R})$.
\end{proof}

\vspace{12pt}
\noindent
\textbf{Remark: }Given $w^{\prime }$ as above, the decomposition $w^{\prime
}=r_{1}\cdots \hat{r_{i}}\cdots r_{n}$ is unique. In fact, if $w=r_{1}\cdots
r_{i}\cdots r_{j}\cdots r_{n}$ and $w^{\prime }=r_{1}\cdots r_{i}\cdots \hat{%
r_{j}}\cdots r_{n}$ then $r_{i+1}\cdots r_{j}=r_{i}\cdots r_{j-1}$ which
cannot happen (see \cite{San10}, Chapter 9).

\subsection{Gradient flow\label{secgrad}}

It is known that the vector field $\widetilde{H}$ with flow $\exp tH$
induced on a flag manifold $\mathbb{F}_{\Theta }$ by a regular element $H\in 
\mathfrak{a}^{+}$ is the gradient of a Morse function. For this flow, the
singularities are $wb_{0}$, $w\in \mathcal{W}$, whose unstable and stable
manifolds are Bruhat cells $W^{u}(wb_{0})=N\cdot wb_{0}$ and $%
W^{s}(ub_{0})=N^{-}\cdot ub_{0}$ (see \cite{DKV83}, for details). 

Below we describe these orbits in terms of characteristic
maps of the cellular decomposition constructed above.

Take $w=r_{1}\cdots r_{n}\in \mathcal{W}$ with the characteristic map $\Phi
_{w}$ and let $w^{\prime }=r_{1}\cdots \hat{r_{i}}\cdots r_{n}$ such that $\mathcal{S}_{w^{\prime}}\subset \mathcal{S}_{w}$ and $\dim \mathcal{S}_{w^{\prime }}=\dim \mathcal{S}_{w}-1$, by Proposition \ref{propappendix}.

Note that by construction $wb_{0}=\Phi _{w}(\pi/2,\ldots ,\pi/2,\ldots ,\pi/2)$, that is, $wb_0$ is the image of the center of the cube. Now,
consider the path 
\begin{equation*}
\phi _{i}(t)=\Phi _{w}\left(\pi/2,\ldots ,t,\ldots ,\pi/2 \right)\qquad \, , \, t\in \lbrack 0,\pi ],
\end{equation*}%
where $t$ is in the $i$-th position. Then $\phi (\pi/2)=w\cdot
b_{0} $, $\phi (0)=w^{\prime }\cdot b_{0}$ comes from the $0$-face and $\phi
(\pi )=w^{\prime }\cdot b_{0}$ comes from the $\pi $-face. Below we prove
that the two pieces of $\phi _{i}\left( t\right) $, from $\pi /2$ to $\pi $
and from $0$ to $\pi /2$ (in reversed direction) are the two gradient lines
joining the singularities $w\cdot b_{0}$ and $w^{\prime }\cdot b_{0}$.

In what follows, we write $w^{\prime }=r_{1}\cdots \hat{r_{i}}\cdots
r_{n}=u\cdot v$, i.e., $u=r_{1}\cdots r_{i-1}$ and $v=r_{i+1}\cdots r_{n}$. Put $%
X_{\beta }=\mathrm{Ad}(u)X_{\alpha _{i}}$, $Y_{\beta }=\theta (X_{\beta })$
and $A_{\beta }=X_{\beta }+Y_{\beta }$.

\begin{lema}
\label{negativo} With the above notation, we have that
\begin{equation*}
\phi _{i}(t)=\exp (sA_{\beta })wb_{0}
\end{equation*}
where $s=t+\pi/2\in \lbrack -\pi/2,\pi/2]$ if $t\in \lbrack 0,\pi]$.
\end{lema}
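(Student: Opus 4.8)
The plan is to unwind the definition of the characteristic map $\Phi_w$ using the factorization $w = r_1\cdots r_n$ and the chosen one-dimensional parametrizations $\psi_j$. By Lemma~\ref{compact_parametrization_1} we have $\psi_j(\pi/2) = \exp\bigl(\tfrac{\pi}{2}A_{\alpha_j}\bigr)$ for each $j$, so by definition of $\Phi_w$,
\begin{equation*}
\phi_i(t) = \psi_1(\pi/2)\cdots \psi_{i-1}(\pi/2)\,\psi_i(t)\,\psi_{i+1}(\pi/2)\cdots\psi_n(\pi/2)\cdot b_0 .
\end{equation*}
First I would observe that the trailing block $\psi_{i+1}(\pi/2)\cdots\psi_n(\pi/2)\cdot b_0$ equals $\psi_{i+1}(\pi/2)\cdots\psi_n(\pi/2)\cdot b_0$; more to the point, since $\exp(\pi A_{\alpha_j})\cdot b_0 = m_{\alpha_j}\cdot b_0 = b_0$ is not what we want here, the correct move is to recognize that $\psi_i(\pi/2)\,(\text{trailing block})\cdot b_0$, together with the definition $w = r_1\cdots r_n$, gives $wb_0$ when $t = \pi/2$ — this is exactly the already-recorded identity $wb_0 = \Phi_w(\pi/2,\ldots,\pi/2)$. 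The key algebraic identity I would use is the conjugation rule: for $u = r_1\cdots r_{i-1}$ (represented in $M^*$), $u\exp(tA_{\alpha_i})u^{-1} = \exp\bigl(t\,\mathrm{Ad}(u)A_{\alpha_i}\bigr) = \exp(tA_\beta)$, because $\mathrm{Ad}$ is a Lie algebra homomorphism and $A_\beta = X_\beta + Y_\beta = \mathrm{Ad}(u)X_{\alpha_i} + \theta(\mathrm{Ad}(u)X_{\alpha_i})$ — here one needs that $\mathrm{Ad}(u)$ commutes with $\theta$, which holds since $u$ can be chosen in $K$ (representatives of Weyl elements lie in $M^* \subset K$).

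The main computation then proceeds as follows. Write $\psi_j(\pi/2) = \exp(\tfrac\pi2 A_{\alpha_j})$ and split $\phi_i(t)$ as
\begin{equation*}
\phi_i(t) = \Bigl(\prod_{j<i}\psi_j(\pi/2)\Bigr)\exp(tA_{\alpha_i})\Bigl(\prod_{j>i}\psi_j(\pi/2)\Bigr)\cdot b_0 .
\end{equation*}
Denoting $g_u = \prod_{j<i}\psi_j(\pi/2)$ and using $g_u \exp(tA_{\alpha_i}) = \exp(t\,\mathrm{Ad}(g_u)A_{\alpha_i})\,g_u$, I would want $\mathrm{Ad}(g_u)A_{\alpha_i} = A_\beta$. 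This is not literally true because $g_u = \exp(\tfrac\pi2 A_{\alpha_1})\cdots\exp(\tfrac\pi2 A_{\alpha_{i-1}})$ is not the Weyl group representative $u$; rather, one must check that $g_u$ differs from a representative of $u$ by an element of $M$, or more precisely track that the curve $\psi_j(\pi/2)$ at parameter $\pi/2$ sits "halfway" along the fiber. I expect the cleanest route is: $\mathrm{Ad}(g_u)$ and $\mathrm{Ad}(u)$ agree on $\mathfrak{g}_{\alpha_i}$ up to the sign ambiguities controlled by Lemma~\ref{compact_parametrization_2} — but since we are writing things back in terms of $A_\beta$ with the specified sign of $X_\beta$, these signs are precisely absorbed, and moreover the base point $g_u\cdot b_0 = u\cdot b_0$ in $\mathbb{F}$ (they lie in the same fiber structure), so after moving the exponential past $g_u$ we get
\begin{equation*}
\phi_i(t) = \exp\bigl(t A_\beta\bigr)\,g_u\Bigl(\prod_{j>i}\psi_j(\pi/2)\Bigr)\cdot b_0 .
\end{equation*}
Then I would substitute $t = \pi/2$: the left side is $wb_0$, so $g_u\bigl(\prod_{j>i}\psi_j(\pi/2)\bigr)\cdot b_0 = \exp(-\tfrac\pi2 A_\beta)\,wb_0$, and plugging this back yields $\phi_i(t) = \exp(tA_\beta)\exp(-\tfrac\pi2 A_\beta)wb_0 = \exp\bigl((t+\pi/2)A_\beta\bigr)wb_0$ by the one-parameter group property, which is the claim with $s = t + \pi/2 \in [-\pi/2,\pi/2]$.

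\textbf{Main obstacle.} The delicate point is the identification $\mathrm{Ad}(g_u)A_{\alpha_i} = A_\beta$ together with the claim that $g_u\bigl(\prod_{j>i}\psi_j(\pi/2)\bigr)\cdot b_0 = \exp(-\tfrac\pi2 A_\beta)wb_0$ rather than something off by an element of $M$ acting on $wb_0$. I expect this is handled by exploiting that everything has already been arranged so that $wb_0 = \Phi_w(\pi/2,\ldots,\pi/2)$ exactly, so that the "normalization constant" is forced to be $\exp(-\tfrac\pi2 A_\beta)$ with no residual $M$-ambiguity; the sign conventions in the definitions of $X_\beta = \mathrm{Ad}(u)X_{\alpha_i}$ and $A_\beta$ are chosen precisely to make this work. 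I would also need to double-check that $s$ ranges over $[-\pi/2,\pi/2]$ as $t$ ranges over $[0,\pi]$ and that the two halves $s \in [-\pi/2,0]$ and $s\in[0,\pi/2]$ correspond to the two gradient lines, but that is immediate once the displayed formula is established, since $\exp(sA_\beta)wb_0$ traces an orbit of the one-parameter group $\exp(sA_\beta)$ through the singularity $wb_0$.
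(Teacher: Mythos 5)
Your argument is in substance the paper's own proof: the paper likewise conjugates $\exp(tA_{\alpha_i})$ past the prefix (reading $u$ as the representative $\psi_1(\pi/2)\cdots\psi_{i-1}(\pi/2)\in M^*$, so that $\mathrm{Ad}(u)A_{\alpha_i}=A_\beta$ on the nose because $X_\beta=\mathrm{Ad}(u)X_{\alpha_i}$ is defined with that very representative and $\mathrm{Ad}(u)$ commutes with $\theta$ for $u\in K$) and uses that the center of the cube maps to $wb_0$, there in the form $u\,r_i\,vb_0=wb_0$ after inserting $r_ir_i$ before $vb_0$; your device of evaluating at $t=\pi/2$ to solve for the constant point is only a cosmetic repackaging of that step, so the "main obstacle" you flag is not really an obstacle. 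The one slip is arithmetic: $\exp(tA_\beta)\exp(-\tfrac{\pi}{2}A_\beta)=\exp\bigl((t-\tfrac{\pi}{2})A_\beta\bigr)$, not $\exp\bigl((t+\tfrac{\pi}{2})A_\beta\bigr)$; this is harmless because $\exp(\pi A_\beta)=u\,m_{\alpha_i}\,u^{-1}\in M$ fixes $wb_0$, and indeed $s=t-\pi/2$ is what actually lies in the stated range $[-\pi/2,\pi/2]$ (the lemma's "$s=t+\pi/2$" carries the same period-$\pi$ discrepancy).
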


\begin{proof}
The result is a consequence of the following computation.
\begin{eqnarray*}
\ \phi _{i}(t) &=&u\exp (tA_{i})vb_{0} = u\exp (tA_{i})r_{i}r_{i}vb_{0} \\
&=&u\exp (tA_{i})\cdot \exp ((\pi/2)A_{i})r_{i}vb_{0} \\
&=&\exp \left( \left(t+\pi/2 \right)\mathrm{Ad}(u)A_{i}\right) wb_{0} \\
&=&\exp (sA_{\beta })wb_{0}
\end{eqnarray*}
where $\beta =u\alpha _{i}$ implies that $A_{\beta }=\mathrm{Ad}(u)A_{i}$.
\end{proof}

Let us consider $\phi _{i}(s)=\exp (sA_{\beta })wb_{0}$. It follows that $%
\phi _{i}(0)=wb_{0}$, $\phi _{i}(\pm \pi/2)=w^{\prime }b_{0}$.

\begin{lema}
$\exp(t Y_{\beta})wb_0 = wb_0$.
\end{lema}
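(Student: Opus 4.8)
The plan is to show that $Y_\beta$ annihilates the line $w b_0$ in the flag manifold by unwinding the definitions back to the $\mathfrak{sl}(2,\mathbb{R})$-model. Recall $\beta = u\alpha_i$ with $u = r_1\cdots r_{i-1}$, and $X_\beta = \mathrm{Ad}(u)X_{\alpha_i}$, $Y_\beta = \theta(X_\beta)$. Since $u$ is a product of reflections (each lifted to $M^*$), conjugation by $u$ is an automorphism of $\mathfrak{g}$ carrying $\mathfrak{g}_{\alpha_i}$ to $\mathfrak{g}_\beta$ and, because $\theta$ commutes with $\mathrm{Ad}(u)$ (as $u \in K$), it carries $\mathfrak{g}_{-\alpha_i}$ to $\mathfrak{g}_{-\beta}$; in particular $Y_\beta = \mathrm{Ad}(u)Y_{\alpha_i} \in \mathfrak{g}_{-\beta}$. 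So I first want to rewrite $\exp(tY_\beta)wb_0 = u\exp(tY_{\alpha_i})u^{-1}wb_0 = u\exp(tY_{\alpha_i})vb_0$, using $w = uv$ with $v = r_{i+1}\cdots r_n$ (so that $u^{-1}w = v$, after identifying $r_i = 1$ since the hat removes it — wait, more carefully: $w = r_1\cdots r_i\cdots r_n$ and $w' = uv$; for the path we actually have $wb_0 = u r_i v b_0$, but I should instead use the computation from Lemma \ref{negativo}: it is cleaner to first establish the claim for $\phi_i(s)$).

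Concretely, I would argue as follows. The element $Y_{\alpha_i} \in \mathfrak{g}_{-\alpha_i}$ is a lowering operator, so $\exp(tY_{\alpha_i})$ lies in the lower triangular nilpotent subgroup $\overline{N}(\alpha_i)$ of the rank-one group $G(\alpha_i)$ — equivalently, under the homomorphism $\rho: \mathfrak{sl}(2,\mathbb{R}) \to \mathfrak{g}(\alpha_i)$ with $\rho(Y) = Y_{\alpha_i}$, it corresponds to $\exp\begin{pmatrix}0&0\\ *&0\end{pmatrix}$. The fiber of $\mathbb{F}\to \mathbb{F}_{\alpha_i}$ through $b_0$ is the flag of $G(\alpha_i) \cong \mathrm{SL}(2,\mathbb{R})$, a circle, and $b_0$ corresponds to the point fixed by the lower triangular Borel; hence $\exp(tY_{\alpha_i})\cdot b_0 = b_0$. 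Therefore $\exp(tY_\beta)wb_0$: since $w = uv$ I want $u^{-1}wb_0 = vb_0$ to be the relevant base point and $Y_\beta$ to act on it through the $u$-conjugate rank-one group; but $vb_0$ need not lie in the fiber of $\mathbb{F}\to \mathbb{F}_{\alpha_i}$ through $b_0$. The resolution is to use that $v = r_{i+1}\cdots r_n$ involves only reflections $r_j$ with $j > i$, so no occurrence of $\alpha_i$; and in $\mathbb{F}_{\alpha_i}$, the point $vb_0$ — we need $vb_{\alpha_i} = b_{\alpha_i}$, which holds precisely because the minimal expression $w' = r_1\cdots\hat{r_i}\cdots r_n$ being reduced forces $v$ to not involve $r_i$, so $v$ fixes the origin of $\mathbb{F}_{\alpha_i}$. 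Hence $\exp(tY_{\alpha_i})vb_0$ stays in the fiber $\pi_i^{-1}(b_{\alpha_i})$ through $vb_0$, which is a single circle, and on it $\exp(tY_{\alpha_i})$ fixes the point $vb_0$ (the $\overline{N}(\alpha_i)$-fixed point of that fiber) — so $\exp(tY_{\alpha_i})vb_0 = vb_0$, giving $\exp(tY_\beta)wb_0 = u\,vb_0$; then I must reconcile $uvb_0$ with $wb_0 = ur_ivb_0$.

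Here is where I expect the main obstacle, and I think the cleanest route avoids it entirely by working with Lemma \ref{negativo} already in hand: write $wb_0 = \exp((\pi/2)A_\beta)^{-1}\phi_i(0)$... no — instead, directly compute $\exp(tY_\beta)wb_0$ using the $\mathfrak{sl}(2)$-relations inside $G(\beta) := u G(\alpha_i) u^{-1}$. The fiber $\pi_\beta^{-1}\pi_\beta(b_0)$... the point is that $wb_0$ lies in the fiber through $b_0$ of the projection associated to $\beta$ only if $\beta$ interacts correctly; rather, I should observe $wb_0 = u r_i v b_0 = u r_i b_0$ when $vb_0 = b_0$ in the $\alpha_i$-fiber — but again $vb_0\neq b_0$ in $\mathbb{F}$. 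The honest statement: since $\pi_i(vb_0) = b_{\alpha_i} = \pi_i(b_0)$ (as $v$ fixes $b_{\alpha_i}$), the points $vb_0$ and $b_0$ lie in the same $\pi_i$-fiber, a circle on which $G(\alpha_i)$ acts; $r_i$ acts on this circle as the antipodal-type map sending the $N(\alpha_i)$-fixed point $b_0$ to the $\overline{N}(\alpha_i)$-fixed point $r_ib_0$. Now $\exp(tY_\beta)wb_0 = u\exp(tY_{\alpha_i})u^{-1}wb_0 = u\exp(tY_{\alpha_i})(r_ivb_0)$, and $r_ivb_0$ is the $\overline{N}(\alpha_i)$-fixed point in its $\pi_i$-fiber (since $vb_0$ is — $v$ commuting suitably — or directly: $r_ivb_0 = vr_i'b_0$ for a reflection $r_i'$ in a root not involving... ). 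I would settle this by the clean fact: $\exp(tY_{\alpha_i})$ is in $\overline{N}(\alpha_i) \subset \overline{N}$, and $\overline{N}$ fixes every point of the form $w_0 w b_0$ that is a "lowest" point of its fiber; more robustly, $Y_{\alpha_i} \in \mathfrak{g}_{-\alpha_i}$ and the isotropy subalgebra $\mathfrak{g}_{x}$ at $x = r_i v b_0 = \mathrm{Ad}(r_iv)\mathfrak{p}$-fixed point contains $\mathrm{Ad}(r_iv)\mathfrak{p} \supset \mathrm{Ad}(r_i)\mathfrak{g}_{-\alpha_i}\cdot$... — $\mathfrak{g}_{-\alpha_i} = \mathrm{Ad}(r_i)\mathfrak{g}_{\alpha_i} \subset \mathrm{Ad}(r_i)\mathfrak{p}$, and $v$ fixes $b_{\alpha_i}$ so $\mathrm{Ad}(v)\mathfrak{p}_{\alpha_i} = \mathfrak{p}_{\alpha_i} \ni \mathfrak{g}_{-\alpha_i}$ hence $\mathfrak{g}_{-\alpha_i} \subset \mathrm{Ad}(v)\mathfrak{p}_{\alpha_i}$, and also $\mathfrak{g}_{-\alpha_i}\subset \mathrm{Ad}(r_i)\mathrm{Ad}(v)\mathfrak{p}$, i.e. $Y_{\alpha_i}$ is in the isotropy at $r_ivb_0$. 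Thus $\exp(tY_{\alpha_i})$ fixes $r_ivb_0$, so $\exp(tY_\beta)wb_0 = u\,r_ivb_0 = wb_0$.

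In summary, the key steps in order are: (1) reduce to the rank-one picture via $Y_\beta = \mathrm{Ad}(u)Y_{\alpha_i}$ and $\exp(tY_\beta)wb_0 = u\exp(tY_{\alpha_i})u^{-1}wb_0$; (2) identify $u^{-1}wb_0 = r_i v b_0$ and note that $v$ fixes the origin $b_{\alpha_i}$ of $\mathbb{F}_{\alpha_i}$ because the removed-index expression $w' = r_1\cdots\hat{r_i}\cdots r_n$ is reduced and $v$ uses no $r_i$; (3) conclude $\mathfrak{g}_{-\alpha_i} \subset \mathrm{Ad}(r_i)\mathrm{Ad}(v)\mathfrak{p}$, so $Y_{\alpha_i}$ lies in the isotropy subalgebra at $r_ivb_0$; (4) exponentiate to get $\exp(tY_{\alpha_i})r_ivb_0 = r_ivb_0$ and apply $u$. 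The main obstacle is step (3) — carefully tracking which root spaces sit inside the conjugated parabolic $\mathrm{Ad}(r_iv)\mathfrak{p}$ — and one could alternatively deduce it from Lemma \ref{negativo} by differentiating the path $\phi_i(s) = \exp(sA_\beta)wb_0$ at the point where it hits $w'b_0$, using that $A_\beta = X_\beta + Y_\beta$ and that $X_\beta$ alone generates a curve returning to $wb_0$, but the isotropy computation above is the most direct.
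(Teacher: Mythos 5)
Your overall plan---show that $Y_{\beta }$ lies in the isotropy subalgebra at $wb_{0}$---is exactly the right idea (and is the paper's idea), but the step that carries all the weight is justified by a false claim. You assert that $v=r_{i+1}\cdots r_{n}$ fixes the origin $b_{\alpha _{i}}$ of $\mathbb{F}_{\alpha _{i}}$ ``because $v$ uses no $r_{i}$''. This is not true: $vb_{\alpha _{i}}=b_{\alpha _{i}}$ if and only if $v\in \mathcal{W}_{\{\alpha _{i}\}}=\{1,r_{i}\}$, and avoiding $r_{i}$ in a reduced word does not put $v$ in that subgroup (in $A_{2}$, $v=r_{2}$ contains no $r_{1}$ yet moves $b_{\alpha _{1}}$). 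The same error infects the assertion that $vb_{0}$ and $b_{0}$ lie in the same $\pi _{i}$-fiber, and the incidental claim $\exp (tY_{\alpha _{i}})b_{0}=b_{0}$ is also wrong with the paper's conventions, since $b_{0}$ is fixed by $\mathfrak{p}=\mathfrak{m}\oplus \mathfrak{a}\oplus \mathfrak{n}$, which does not contain $\mathfrak{g}_{-\alpha _{i}}$. Moreover, even if one granted $\mathrm{Ad}(v)\mathfrak{p}_{\alpha _{i}}=\mathfrak{p}_{\alpha _{i}}$, your chain only places $\mathfrak{g}_{-\alpha _{i}}$ inside $\mathrm{Ad}(v)\mathfrak{p}_{\alpha _{i}}$, whereas what you need is membership in the isotropy $\mathrm{Ad}(r_{i}v)\mathfrak{p}$ of the \emph{minimal} parabolic at $r_{i}vb_{0}$; the jump to $\mathfrak{g}_{-\alpha _{i}}\subset \mathrm{Ad}(r_{i})\mathrm{Ad}(v)\mathfrak{p}$ is never argued.

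The conclusion of your step (3) is nevertheless correct, and the honest reason is purely root-combinatorial: $Y_{\alpha _{i}}\in \mathrm{Ad}(r_{i}v)\mathfrak{p}$ if and only if $\mathrm{Ad}\left( (r_{i}v)^{-1}\right) Y_{\alpha _{i}}\in \mathfrak{p}$, and this element lies in $\mathfrak{g}_{v^{-1}r_{i}(-\alpha _{i})}=\mathfrak{g}_{v^{-1}\alpha _{i}}$, so what you must check is $v^{-1}\alpha _{i}>0$. This holds because $r_{i}r_{i+1}\cdots r_{n}$ is a suffix of the reduced word $w=r_{1}\cdots r_{n}$, hence reduced, so $\ell (r_{i}v)=\ell (v)+1$; equivalently $\beta =r_{1}\cdots r_{i-1}\alpha _{i}\in \Pi _{w}$, i.e.\ $w^{-1}\beta <0$ and $-w^{-1}\beta =v^{-1}\alpha _{i}>0$. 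With that substitution your argument closes, and it then coincides, up to splitting $w=u\,r_{i}v$, with the paper's one-line proof: translate to the origin, $\exp (tY_{\beta })wb_{0}=w\exp \left( t\,\mathrm{Ad}(w^{-1})Y_{\beta }\right) b_{0}$, and note $\mathrm{Ad}(w^{-1})Y_{\beta }\in \mathfrak{g}_{-w^{-1}\beta }\subset \mathfrak{n}\subset \mathfrak{p}$, so the exponential fixes $b_{0}$.
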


\begin{proof}
The main idea is translate to the origin. That is 
\begin{equation*}
\exp (tX_{-\beta })wb_{0}=w(w^{-1}\exp (tX_{-\beta }))wb_{0}=w\exp
(tAd(w^{-1}X_{-\beta }))b_{0}.
\end{equation*}%
The root $\beta $ is positive while $w^{-1}\beta $ is negative. As $u\left( 
\mathfrak{g}_{\alpha }\right) =\mathfrak{g}_{u\alpha }$ we have that $
\mathrm{Ad}(w^{-1}X_{-\beta })\in \mathfrak{g}_{-w^{-1}\beta }$. But since $
w^{-1}\beta $ is a negative root, it follows that $-w^{-1}\beta $ is positive. Hence, it
belongs to $\mathfrak{n}^{+}\subset \mathfrak{p}$ and therefore $\exp
(tAd(w^{-1}X_{-\beta }))b_{0}=b_{0}$ for all $t\in \mathbb{R}$.
\end{proof}

\begin{lema}[\cite{Koc95},Lemma 2.4.1]
Let $t=\tan (s),r=-\sin (s)\cos (s),\lambda ={\cos (s)}^{-1}$. Hence 
\begin{equation*}
\phi _{i}(s)=e^{tX_{\beta }}e^{rY_{\beta }}e^{\log (\lambda )H_{\beta
}^{\vee }}wb_{0}.
\end{equation*}
\end{lema}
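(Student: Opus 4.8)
The plan is to reduce the identity to a computation inside the rank-one group $G(\beta)$ where $\beta = u\alpha_i$, using the isomorphism $\fg(\beta)\cong\fsl(2,\re)$ provided by the earlier setup. By Lemma \ref{negativo} we already know $\phi_i(s)=\exp(sA_\beta)wb_0$ with $A_\beta=X_\beta+Y_\beta$, so what remains is to verify the matrix identity
\begin{equation*}
\exp(sA)=\exp(tX')\exp(rY')\exp(\log(\lambda)H)
\end{equation*}
in $\mathrm{Sl}(2,\re)$, where $A$, $X'$, $Y'$, $H$ are the images of $A_\beta$, $X_\beta$, $Y_\beta$, $H_\beta^\vee$ under the fixed isomorphism $\rho$ (so $A=\begin{pmatrix}0&-1\\1&0\end{pmatrix}$ up to the conventions of the $\rho(X)$, $\rho(Y)$ above), and then transport the equality to $\fg(\beta)$ and let it act on $wb_0$.

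First I would write down $\exp(sA)=\begin{pmatrix}\cos s&-\sin s\\\sin s&\cos s\end{pmatrix}$ and perform the $LU$-type (or rather $\overline{N}AN$, i.e. Gauss) factorization of this rotation matrix explicitly: the upper-triangular unipotent factor $\exp(tX')=\begin{pmatrix}1&-t\\0&1\end{pmatrix}$, the diagonal factor $\exp(\log(\lambda)H)=\mathrm{diag}\{\lambda,\lambda^{-1}\}$, and the lower-triangular unipotent factor $\exp(rY')=\begin{pmatrix}1&0\\r&1\end{pmatrix}$, and check that multiplying these three gives back the rotation precisely when $t=\tan s$, $\lambda=1/\cos s$, $r=-\sin s\cos s$. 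This is the Gram--Schmidt/Gauss decomposition of $\mathrm{SO}(2)$ inside $\mathrm{Sl}(2,\re)$ valid on the open cell $\cos s\neq0$; it is a short $2\times2$ computation. Then applying the representation $\Phi$ (equivalently $\rho$ on the Lie algebra level and exponentiating, as in the construction of $m_\alpha$ in the excerpt) carries this to the identity $\exp(sA_\beta)=e^{tX_\beta}e^{rY_\beta}e^{\log(\lambda)H_\beta^\vee}$ in $G(\beta)\subset G$, and evaluating both sides at $wb_0$ combined with Lemma \ref{negativo} finishes the proof.

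One subtlety to handle carefully is the sign/normalization matching between the abstract generators $X,Y,H$ fixed earlier via $\rho$ and the shifted generators $X_\beta=\mathrm{Ad}(u)X_{\alpha_i}$, $Y_\beta=\theta(X_\beta)$, $H_\beta^\vee=[X_\beta,Y_\beta]$: because $\mathrm{Ad}(u)$ is a Lie algebra automorphism commuting with $\theta$ (as $u\in M^*\subset K$), the triple $(X_\beta,Y_\beta,H_\beta^\vee)$ satisfies the same $\fsl(2)$ bracket relations as $(X_{\alpha_i},Y_{\alpha_i},H_{\alpha_i}^\vee)$, hence as $(X,Y,H)$ under $\rho$, so the factorization computed in the standard copy transports verbatim. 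The main (and really the only) obstacle is bookkeeping: making sure the ranges match — $s=\tan^{-1}$-type substitution is valid exactly on the open Bruhat cell, i.e. for $s\in(-\pi/2,\pi/2)$, which is precisely where $\phi_i(s)$ lies in $N\cdot wb_0$ — and that the conventions for $X'$ versus $-X'$ (the paper's $\rho(X)=\begin{pmatrix}0&-1\\0&0\end{pmatrix}$ has a minus sign) are tracked through consistently so that the signs of $t$ and $r$ come out as stated. Everything else is a direct consequence of Lemma \ref{negativo} and the elementary $2\times2$ Gauss decomposition.
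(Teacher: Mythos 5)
The paper itself gives no proof of this lemma --- it is quoted from Kocherlakota \cite{Koc95}, Lemma 2.4.1 --- so your argument is a genuinely supplied proof rather than a reproduction, and the route you take (reduce to the rank-one group attached to $\beta=u\alpha_i$, verify a Gauss factorization of a rotation in $\mathrm{Sl}(2,\re)$, transport it through the $\fsl(2)$-homomorphism exactly as in the construction of $m_\alpha$, and evaluate at $wb_0$ via Lemma \ref{negativo}) is the natural one and does work. Your treatment of the transport is correct: since $u\in M^*\subset K$, $\mathrm{Ad}(u)$ commutes with $\theta$, so $(X_\beta,Y_\beta,H_\beta^\vee)=\mathrm{Ad}(u)(X_{\alpha_i},Y_{\alpha_i},H_{\alpha_i}^\vee)$ satisfies the same normalized $\fsl(2)$ relations, and restricting to $s\in(-\pi/2,\pi/2)$, i.e.\ $\cos s\neq 0$, is the right domain.

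One concrete correction. With the paper's conventions $\rho(X)=\left(\begin{smallmatrix}0&-1\\0&0\end{smallmatrix}\right)$, $\rho(Y)=\left(\begin{smallmatrix}0&0\\1&0\end{smallmatrix}\right)$, one has $\exp(sA)=\left(\begin{smallmatrix}\cos s&-\sin s\\ \sin s&\cos s\end{smallmatrix}\right)$ and $e^{tX}e^{rY}e^{\log(\lambda)H}=\left(\begin{smallmatrix}(1-tr)\lambda&-t\lambda^{-1}\\ r\lambda&\lambda^{-1}\end{smallmatrix}\right)$; matching entries forces $\lambda=(\cos s)^{-1}$, $t=\tan s$, but $r=+\sin s\cos s$ (from the $(2,1)$ entry $r\lambda=\sin s$), not $-\sin s\cos s$. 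So your assertion that the signs ``come out as stated'' is not accurate for $r$ under the paper's normalization; the stated $r=-\sin s\cos s$ reflects Kocherlakota's choice of $\fsl_2$-triple. This does not endanger the lemma or your proof of it: the statement is an identity of points of $\mathbb{F}$, and $e^{rY_\beta}$ fixes $wb_0$ for every $r$ by the lemma immediately preceding this one (likewise $e^{\log(\lambda)H_\beta^\vee}$ fixes $wb_0$, since $wb_0$ is an $A$-fixed point), so the displayed formula is insensitive to the sign of $r$. You should either correct the value of $r$ in your $2\times 2$ computation or add that one line, invoking the fixing property, to pass from the matrix identity you actually verify to the identity as stated.
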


Note however that both matrices $e^{\log (\lambda )H_{\beta }^{\vee }}$ and $%
e^{rY{\beta }}$ fix $wb_{0}$. This gives that 
\begin{equation}
\phi _{i}(s)=e^{\tan (s)X_{\beta }}wb_{0}\,,\,s\in \left( -\pi/2,\pi/2\right).  \label{fluxo}
\end{equation}%
Finally, we have that 
\begin{equation*}
\lim_{t\rightarrow \pm \infty }\exp ^{tX_{\beta }}wb_{0}=\phi _{i}(\pm \pi/2)=w^{\prime }b_{0}.
\end{equation*}

From (\ref{fluxo}) we get the behaviour of the gradient flow $h^{t}=\exp tH$
with $H\in \mathfrak{a}^{+}$.

Let $s \neq 0$. It is easy to see that $h^t (\phi_i(s)) = \exp (\tan(s) e^{t
\beta(H)}X_\beta) \cdot wb_0$. This may be written as $h^t (\phi_i(s)) =
\phi_i( s^{\prime })$ with $s^{\prime} = \arctan (\tan(s) e^{t \beta(H)})$.
Hence, we conclude that the gradient flow leaves the path $\phi_i$ invariant.

Observe that $\beta$ is a positive root. So $e^{t \beta(H)}X_\beta \to 0$ as 
$t \to -\infty$ and hence $s^{\prime }\to 0$, i.e.,

\begin{center}
$\lim_{t \to - \infty} h^t \phi_i (s) = \phi_i(0) = wb_0 .$
\end{center}

When $t\rightarrow +\infty $ it follows that $\tan (s)e^{t\beta (H)}X_{\beta
}\rightarrow \pm \infty $ depending only in the sign of $\tan (s)$. Hence $%
s^{\prime }\rightarrow \pm \pi/2$, i.e.,

\begin{center}
$\lim_{t \to + \infty} h^t \phi_i (s) = \phi_i(\pm \pi/2) =
w^{\prime }b_0.$
\end{center}

Thus we get the desired result.

\begin{prop}
$\phi_i(s)$ give the two gradient flow lines between $wb_0$ and $w^{\prime }b_0$%
. One of them belongs to the interval $s \in (-\pi/2, 0)$ while the
other belongs to the interval $s \in (0, \pi/2)$.
\end{prop}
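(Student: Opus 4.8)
The plan is to assemble the proposition directly from the three lemmas and the flow computation immediately preceding it, so that almost no new work is required. First I would recall from equation~(\ref{fluxo}) that on the open interval $s\in(-\pi/2,\pi/2)$ the path takes the normalized form $\phi_i(s)=e^{\tan(s)X_\beta}wb_0$, where $\beta=u\alpha_i$ and $\mathfrak{g}(\beta)\cong\mathfrak{sl}(2,\mathbb R)$ by Proposition~\ref{propappendix}; in particular $\phi_i(0)=wb_0$ and $\lim_{s\to\pm\pi/2}\phi_i(s)=\phi_i(\pm\pi/2)=w^\prime b_0$, so the closure of the image of each of the two half-open subintervals is an embedded arc from $wb_0$ to $w^\prime b_0$. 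I would then invoke the reparametrization identity $h^t(\phi_i(s))=\phi_i(s^\prime)$ with $s^\prime=\arctan(\tan(s)e^{t\beta(H)})$ established just above the statement: since $\beta(H)>0$ for $H\in\mathfrak a^+$, this shows each of the two arcs $\phi_i((-\pi/2,0))$ and $\phi_i((0,\pi/2))$ is invariant under the gradient flow $h^t$, and that along each arc one has $h^t\to wb_0$ as $t\to-\infty$ and $h^t\to w^\prime b_0$ as $t\to+\infty$.

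To conclude that these arcs are precisely \emph{gradient} flow lines (and the only two between the pair), I would argue as follows. By \cite{DKV83} the flow $h^t$ is the gradient flow of a Morse function whose critical points are the $wb_0$, with $W^u(wb_0)=N\cdot wb_0$ and $W^s(w^\prime b_0)=N^-\cdot w^\prime b_0$; the points of $\phi_i(s)$ for $s\ne 0,\pm\pi/2$ lie in $N\cdot wb_0$ (by item~(3) of Proposition~\ref{characteristic_map}, since such $\mathbf t$ is an interior point of the cube $B_w$) and their forward limit is $w^\prime b_0$, so they lie in $W^u(wb_0)\cap W^s(w^\prime b_0)$. The connecting manifold $W^u(wb_0)\cap W^s(w^\prime b_0)$ between critical points whose indices differ by one (guaranteed by $\dim\mathcal S_w-\dim\mathcal S_{w^\prime}=1$) is, once the flow direction is quotiented out, a finite set; concretely here $e^{\tan(s)X_\beta}wb_0$ as $s$ ranges over $(-\pi/2,0)\cup(0,\pi/2)$ traces exactly the one-parameter $N$-orbit direction $\{e^{tX_\beta}wb_0:t\in\mathbb R\}$ (split by the sign of $t$), which is a single flow trajectory up to the time-shift $t\mapsto t\,e^{\tau\beta(H)}$ absorbed by $h^\tau$. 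Hence there are exactly two flow lines, carried by $s\in(-\pi/2,0)$ and $s\in(0,\pi/2)$ respectively.

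The main obstacle I expect is the bookkeeping needed to be sure the normalized formula~(\ref{fluxo}) really does capture \emph{both} of the gradient lines and not just one: one must check that the two branches $s<0$ and $s>0$ give geometrically distinct arcs (i.e. $e^{tX_\beta}wb_0$ for $t>0$ and for $t<0$ are different trajectories, not the same orbit reparametrized), and that no further connecting orbits between $wb_0$ and $w^\prime b_0$ exist outside the chosen cube face $\phi_i$. The first point follows since $t\mapsto e^{tX_\beta}wb_0$ is injective on $\mathbb R$ (the stabilizer of $wb_0$ in the one-parameter group $\exp(\mathbb R X_\beta)$ is trivial, as $X_\beta$ is in the unstable direction at $wb_0$) and $0$ is not in the image of either branch, so the two branches are disjoint and neither contains $wb_0$. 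The second point is exactly the index-difference-one rigidity of the Morse--Smale connecting set, which is standard once the flow is known to be Morse--Smale (as it is here, being the gradient flow attached to the Bruhat decomposition, cf.\ \cite{DKV83}). Putting these together yields the proposition.
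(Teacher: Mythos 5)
Your proposal follows essentially the same route as the paper: it rests on the normalized formula (\ref{fluxo}) together with the computation $h^{t}(\phi _{i}(s))=\phi _{i}\bigl(\arctan (\tan (s)e^{t\beta (H)})\bigr)$ carried out just before the statement, from which the invariance of the two arcs $s\in (-\pi/2,0)$ and $s\in (0,\pi/2)$ and the limits $wb_{0}$ as $t\rightarrow -\infty$ and $w^{\prime }b_{0}$ as $t\rightarrow +\infty$ follow exactly as in the text. The only deviation is your added Morse--Smale argument that these are the \emph{only} connecting orbits, which the paper itself does not attempt (and which, as you phrase it, yields finiteness rather than the exact count of two), but this does not affect the part of the statement the paper's proof actually establishes.
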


\section{Homology of maximal flag manifolds\label{sechommax}}

The cellular homology of a CW complex is defined from a cellular
decomposition of the complex and is isomorphic to the singular homology of
the space. It means that the homology group does not depend on the choice of
the cellular decomposition, although the boundary operator may change
according to the choice of the cellular decomposition, i.e., the way the
cells are glued.

In view of that, we \textbf{fix} once and for all reduced expressions 
\begin{equation*}
w=r_{1}\cdots r_{n}
\end{equation*}%
as a product of simple reflections, for each $w\in \mathcal{W}$. After
making these choices we define as before the characteristic map $\Phi _{w} $%
, $w\in \mathcal{W}$, that glues the ball $B_{w}$ in the union of Schubert
cells $\mathcal{S}_{u}$ with $u<w$.

\subsection{The boundary map}

We recall (in our context) the definition of the cellular
boundary maps giving the homology with coefficients in a ring $R$ (see \cite%
{Hat02}). Let $\mathcal{C}$ be the $R$-module freely generated by $\mathcal{S}%
_{w}$, $w\in \mathcal{W}$. The boundary maps $\partial :\mathcal{C}%
\rightarrow \mathcal{C}$ are defined by 
\begin{equation*}
\partial \mathcal{S}_{w}=\sum_{w^{\prime }}c(w,w^{\prime })\mathcal{S}%
_{w^{\prime }}
\end{equation*}%
where the coefficients $c(w,w^{\prime })\in R$ \ satisfy the properties:

\begin{enumerate}
\item $c(w,w^{\prime })=0$ in case $\dim \mathcal{S}_{w}-\dim \mathcal{S}%
_{w^{\prime }}\neq 1$.

\item If $\dim \mathcal{S}_{w}-\dim \mathcal{S}_{w^{\prime }}=1$ then $%
c(w,w^{\prime })=\deg \left( \phi _{w,w^{\prime }}:S_{w}^{d-1}\rightarrow
S_{w^{\prime }}^{d-1}\right) $, where $\phi _{w,w^{\prime }}$ is the
composition of the following maps:

\begin{enumerate}
\item The attaching map: $\Phi _{w}|_{\partial
}(B_{w}^{d}):S_{w}^{d-1}=\partial (B_{w}^{d})\rightarrow \mathcal{S}%
_{w}\setminus N\cdot wb_{0}=\cup _{u<w}\mathcal{S}_{u}=X^{d-1}$, where $%
X^{d-1}$ denotes the $(d-1)$-squeleton of $\mathcal{S}_{w}$.

\item The quotient map $X^{d-1}\rightarrow X^{d-1}/(X^{d-1}\setminus 
\mathcal{S}_{w^{\prime }})$ where we take the cell $\mathcal{S}_{w^{\prime
}} $ inside $X^{d-1}$ and identify its complement in $\mathcal{S}_{w^{\prime
}}$ to a point.

\item The identification: $X^{d-1}/(X^{d-1}\setminus \mathcal{S}_{w^{\prime
}})\cong \mathcal{S}_{w^{\prime }}/(\mathcal{S}_{w^{\prime }}\setminus
N\cdot w^{\prime }b_{0})$ which are in the same space. This last one is $%
\mathcal{S}_{w^{\prime }}/(\mathcal{S}_{w^{\prime }}\setminus N\cdot
w^{\prime }b_{0})=\sigma _{w^{\prime }}$ by definition.

\item $\Phi _{w^{\prime }}^{-1}:\sigma _{w^{\prime }}\rightarrow
S_{w^{\prime }}^{d-1}$. This is the map defined in (\ref{Sphere}).
\end{enumerate}
\end{enumerate}

\vspace{12pt}
\noindent
\textbf{Remark: } There is a subtlety which must be emphasized: $\phi
_{w,w^{\prime }}$ is a map $S^{d-1}\rightarrow S^{d-1}$ whose domain is the
boundary of a ball in some $\mathbb{R}^{N}$ (the ball $B_{w}$) and, hence,
it is a canonically defined sphere. However, the codomain is the space $%
\sigma _{w^{\prime }}$ which is homeomorphic to $S^{d-1}$. To get the
boundary map a homeomorphism $\sigma _{w^{\prime }}\rightarrow S^{d-1}$ must
be fixed beforehand, since distinct homeomorphisms may yield maps with
distinct degrees. Here is where it is needed to choose in advance the
reduced expressions of $w\in \mathcal{W}$.

To compute the the degree $c(w,w^{\prime })=\deg \left( \phi _{w,w^{\prime
}}:S_{w}^{d-1}\rightarrow S_{w^{\prime }}^{d-1}\right) $ when $w=r_{1}\cdots
r_{n}$ and $w^{\prime }=r_{1}\cdots \hat{r_{i}}\cdots r_{n}$ are minimal
decompositions we proceed with the following steps.

\subsubsection*{Step 1: Domain and codomain spheres}

First we identify the spheres $S_{w}^{d-1}$ in the domain and $S_{w^{\prime
}}^{d-1}$ in the codomain.

Remember that $B_{w}=B^{d_{1}}\times \cdots \times B^{d_{n}}$ where $%
B^{d_{i}}$ is the $1$-dimensional choosen to be the interval $[0,\pi ]$, as
in the construction of Lemma \ref{compact_parametrization_1}. The dimension
of $B_{w}$ is $d=d_{1}+\cdots +d_{n}$ and the domain of $\phi _{w,w^{\prime
}}$ is 
\begin{equation*}
S_{w}^{d-1}=\partial (B_{w})=\{(t_{1},\ldots ,t_{n}):\exists j,t_{j}\in
\partial B^{d_{j}}\}
\end{equation*}%
the union of \textquotedblleft faces" of $B_{w}$.

On the other hand let $B_{w^{\prime }}=B^{d_{1}}\times \cdots \times \hat{%
B^{d_{i}}}\times \cdots \times B^{d_{n}}$. Then codomain is the sphere $%
S_{w^{\prime }}^{d-1}$ obtained by collapsing to a point the boundary of $%
B_{w^{\prime }}$. This is seen by items (c) and (d) in the above definition
of $\partial $.

\subsubsection*{Step 2: $\protect\sigma _{w^{\prime }}$ in the image $\Phi
_{w}(S_{w}^{d-1})$.}

The second step is to see how $\sigma _{w^{\prime }}$ sits inside the image $%
\Phi _{w}(S_{w}^{d-1})$. The following lemma says how is the pre-image of $%
N\cdot w^{\prime }b_{0}$ under $\Phi _{w}$.

\begin{lema}
$\Phi _{w}(t_{1},\ldots ,t_{n})\in N\cdot w^{\prime }b_{0}$ if and only if $%
t_{j}\in (B^{d_{j}})^{\circ }$, $j\neq i$ and $t_{i}\in \partial B^{d_{i}}$,
that is, $t_{i}=0$ or $\pi $.
\end{lema}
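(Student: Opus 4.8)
The plan is to characterize directly, using Proposition~\ref{propSchubert_3} together with Lemma~\ref{Schubert_lema}, when $\Phi_w(t_1,\ldots,t_n)$ lands in the Bruhat cell $N\cdot w'b_0$ of the codimension-one subcell $\mathcal{S}_{w'}$, where $w'=r_1\cdots\widehat{r_i}\cdots r_n$. Write $b=\Phi_w(\mathbf{t})=\psi_1(t_1)\cdots\psi_n(t_n)\cdot b_0$ with $\psi_j(t_j)\in K_j$. The starting observation is that $b$ lies in the skeleton $\mathcal{S}_w\setminus N\cdot wb_0=\cup_{u<w}\mathcal{S}_u$ precisely when some $\psi_j(t_j)\in M$, by Proposition~\ref{propSchubert_3}; and for the $1$-dimensional factors relevant here (the ones with $\mathfrak{g}(\alpha_j)\cong\mathfrak{sl}(2,\mathbb{R})$, so $\psi_j:[0,\pi]\to K_j$), Lemma~\ref{compact_parametrization_1} tells us $\psi_j(t_j)\in M$ if and only if $t_j\in\{0,\pi\}=\partial B^{d_j}$. (For higher-dimensional factors $\psi_j(S^{d_j-1})\subset M$ by Lemma~\ref{parametrization}.)

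The forward direction: suppose $t_j\in(B^{d_j})^\circ$ for all $j\neq i$ and $t_i\in\partial B^{d_i}$, i.e.\ $t_i\in\{0,\pi\}$. Then $\psi_i(t_i)\in M$, so as in the proof of Proposition~\ref{propSchubert_3} we may conjugate it past the later factors and absorb it into $b_0$, obtaining $b=\psi_1(t_1)\cdots\psi_{i-1}(t_{i-1})\,v_{i+1}\cdots v_n\cdot b_0$ with $v_j=\psi_i(t_i)\psi_j(t_j)\psi_i(t_i)^{-1}\in K_j$; using the fixed reduced expression of $w'$ this exhibits $b\in\mathcal{S}_{w'}=K_1\cdots\widehat{K_i}\cdots K_n\cdot b_0$. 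Since each of the remaining factors $\psi_j(t_j)$ (resp.\ $v_j$) has $t_j$ in the open ball, none of them lies in $M$, so Proposition~\ref{propSchubert_3} applied to $\mathcal{S}_{w'}$ gives $b\in N\cdot w'b_0$.

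The converse: suppose $b\in N\cdot w'b_0$. Since $N\cdot w'b_0\subset\mathcal{S}_w\setminus N\cdot wb_0$, Proposition~\ref{propSchubert_3} forces some $\psi_k(t_k)\in M$, hence $t_k\in\partial B^{d_k}$. We must show $k=i$ is the only index for which this happens, i.e.\ no two indices can be "degenerate." If $\psi_k(t_k)\in M$ for some $k$, the conjugation argument above drops the $k$-th factor and shows $b\in\mathcal{S}_{r_1\cdots\widehat{r_k}\cdots r_n}$; by the uniqueness Remark following Proposition~\ref{propappendix} the only codimension-one subcell of this form that can contain a point of $N\cdot w'b_0$ is the one with $k=i$ (and $\mathcal{S}_{r_1\cdots\widehat{r_k}\cdots r_n}$ for $k\neq i$ either has the wrong dimension or, being a distinct Schubert cell, meets $N\cdot w'b_0$ only in lower strata, contradicting $b\in N\cdot w'b_0$). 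Hence $k=i$, giving $t_i\in\partial B^{d_i}$; and for $j\neq i$ we cannot have $\psi_j(t_j)\in M$, so $t_j\in(B^{d_j})^\circ$. The main obstacle is making the last paragraph airtight: one must rule out a second degenerate index cleanly, and the cleanest route is to invoke the injectivity/characteristic-map structure of $\Phi_{w'}$ from Proposition~\ref{characteristic_map} applied to $w'$ — a point of $N\cdot w'b_0$ has a \emph{unique} preimage under $\Phi_{w'}$ lying in the open cube $B_{w'}^\circ$, which pins down the remaining coordinates and prevents any of them from hitting a boundary face.
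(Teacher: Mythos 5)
Your argument is correct and is essentially the paper's own proof: since $\psi_i(t_i)=m_{\alpha_i}\in M\subset K_j$ for every $j$, the point is absorbed into $\mathcal{S}_{w'}=K_1\cdots\widehat{K_i}\cdots K_n\cdot b_0$, and Proposition \ref{propSchubert_3} combined with Lemma \ref{compact_parametrization_1} then characterizes membership in $N\cdot w'b_0$ exactly by the remaining $t_j$ being interior. Your extra care in excluding a second degenerate index (a point the paper leaves implicit) is rightly closed by the uniqueness Remark after Proposition \ref{propappendix} together with the disjointness of Bruhat cells; the final appeal to the injectivity of $\Phi_{w'}$ is unnecessary and does not by itself constrain the coordinates in $B_w$, since $\Phi_{w'}$ is defined on the different cube $B_{w'}$.
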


\begin{proof}
If $t_{i}\in \partial B^{d_{i}}$ then $\psi _{i}(t_{i})\in M$ by Lemma \ref%
{compact_parametrization_1}. This implies that 
\begin{equation*}
\Phi _{w}(t_{1},\ldots ,t_{n})=\psi _{1}(t_{1})\cdots \psi _{n}(t_{n})\cdot
b_{0}\in K_{1}\cdots \hat{K_{i}}\cdots K_{n}=\mathcal{S}_{w^{\prime }}
\end{equation*}
since $M\subset K_{s}$ for all sub-index $s$. By Proposition \ref%
{propSchubert_3}, we see that $\Phi _{w}(t_{1},\ldots ,t_{i},\ldots
,t_{n})\in N\cdot w^{\prime }b_{0}$ if and only if $\psi _{j}(t_{j})\notin M$
for $j\neq i$, which in turn is equivalent to $t_{j}\in (B^{d_{j}})^{\circ } 
$, $i\neq j$, by Lemma \ref{compact_parametrization_1}.
\end{proof}

In other words the pre-image $\Phi _{w}^{-1}\left( N\cdot w^{\prime
}b_{0}\right) \subset B_{w}$ is the union of the interior of the two faces
corresponding to the $i$-th coordinate, that is, the faces where $t_{i}=0$
and $t_{i}=\pi $, respectively.

In the quotient $\sigma _{w^{\prime }}=\mathcal{S}_{w^{\prime }}/(\mathcal{S}%
_{w^{\prime }}\setminus N\cdot w^{\prime }b_{0})$ the faces of $\partial
B_{w}$ corresponding to the $j$-th coordinates, $j\neq i$ are collapsed to a
point.

\subsubsection*{Step 3: Degrees}

The degree of $\phi _{w,w^{\prime }}$ is the sum of the degree of two maps,
namely the maps obtaining by restricting to the faces%
\begin{equation*}
\mathcal{F}_{0}^{i}=\{(t_{1},\ldots ,0,\ldots ,t_{n})\}\quad \mathrm{and}%
\quad \mathcal{F}_{\pi }^{i}=\{(t_{1},\ldots ,\pi ,\ldots ,t_{n})\}.
\end{equation*}%
The values of $\phi _{w,w^{\prime }}$ in these faces are given by 
\begin{eqnarray*}
f_{i}^{0}\left( \mathbf{t}\right) &=&\Phi _{w^{\prime }}^{-1}\left( \psi
_{1}(t_{1})\cdots \psi _{i}(0)\cdots \psi _{n}(t_{n})\cdot b_{0}\right) \\
&=&\Phi _{w^{\prime }}^{-1}\left( \psi _{1}(t_{1})\cdots 1\cdots \psi
_{n}(t_{n})\cdot b_{0}\right) . \\
f_{i}^{\pi }\left( \mathbf{t}\right) &=&\Phi _{w^{\prime }}^{-1}\left( \psi
_{1}(t_{1})\cdots \psi _{i}(\pi )\cdots \psi _{n}(t_{n})\cdot b_{0}\right) \\
&=&\Phi _{w^{\prime }}^{-1}\left( \psi _{1}(t_{1})\cdots m_{\alpha
_{i}}\cdots \psi _{n}(t_{n})\cdot b_{0}\right) .
\end{eqnarray*}%
where $\mathbf{t}=\left( t_{1},\ldots ,\widehat{t_{i}},\ldots ,t_{n}\right) $
and $\Phi _{w^{\prime }}$ is given by a choice of a reduced expression $%
w^{\prime }=s_{1}\cdots s_{m}$ (choosed in advance) which may be different
from the reduced expression $w^{\prime }=r_{1}\cdots \hat{r_{i}}\cdots r_{n}$%
.

The degree of $\phi _{w,w^{\prime }}$ is the sum of the degrees of $%
f_{i}^{0} $ and $f_{i}^{\pi }$ which may be considered as maps $%
S^{d-1}\rightarrow S^{d-1}$ by collapsing the boundary to points of the
faces.

Now, the degree of a map $\varphi $ can be computed as the sum of the local
degrees in the inverse image of $\varphi ^{-1}(\xi )$ which has a finite
number of points (see \cite{Hat02}, Proposition 2.30).

In the case of our map $\phi _{w,w^{\prime }}$, the maps $f_{i}^{0}$ and $%
f_{i}^{\pi }$ are homeomorphisms so that pre-image $\phi _{w,w^{\prime
}}^{-1}(\xi )$ of a generic point has two points. Namely a point $x_{1}$ in
the the face $\mathcal{F}_{0}^{i}$ and another one $x_{2}$ in the face $%
\mathcal{F}_{\pi }^{i}$. The local degree at $x_{1}$ is the degree of $%
f_{i}^{0}$ since $f_{i}^{0}$ is a homeomorphism. The same the local degree
at $x_{2}$ is the degree of $f_{i}^{\pi }$.

Finally the degrees of $f_{i}^{0}$ and $f_{i}^{\pi }$ are $\pm 1$ since they
are homeomorphisms.

\vspace{12pt}
\noindent
\textbf{Summarizing:} To get the degree of $\phi _{w,w^{\prime }}$ we must
restrict $\Phi _{w^{\prime }}^{-1}\circ \Phi _{w}$ to the faces $\mathcal{F}%
_{0}^{i}$ and $\mathcal{F}_{\pi }^{i}$ and view these faces as spheres (with
the boundaries collapsed to points). The sum of the degrees of these two
restrictions is the degree of $\phi _{w,w^{\prime }}$.

The restrictions of $\Phi _{w^{\prime }}^{-1}\circ \Phi _{w}$ to the faces $%
\mathcal{F}_{0}^{i}$ and $\mathcal{F}_{\pi }^{i}$ are homeomorphisms and
hence have degree $\pm 1$. It follows that the total degre of $\phi
_{w,w^{\prime }}$ is $0$ or $\pm 2$. This is one of the main results on the
homology of flag manifolds.

\begin{teo}
The coefficient $c(w,w^{\prime })=\deg (f_{i}^{0})+\deg (f_{i}^{\pi })=0$ or 
$\pm 2$, for any $w,w^{\prime }\in \mathcal{W}$.
\end{teo}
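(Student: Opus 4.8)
The plan is to assemble the ingredients already prepared above, since the theorem is essentially the conclusion of the running analysis of $\deg(\phi_{w,w'})$. First I would dispose of the trivial case: if $\dim\mathcal{S}_w-\dim\mathcal{S}_{w'}\neq 1$ then $c(w,w')=0$ by the very definition of the cellular boundary, so nothing needs to be done. Hence I may assume the dimension drops by exactly one. By Proposition~\ref{propappendix}, once the reduced expression $w=r_1\cdots r_n$ has been fixed this forces $w'=r_1\cdots\widehat{r_i}\cdots r_n$ for a single index $i$ with $\mathfrak{g}(\alpha_i)\cong\mathfrak{sl}(2,\mathbb{R})$ (equivalently $d_i=1$), so that the $i$-th factor of $B_w$ is the interval $[0,\pi]$; and by the definition of $\partial$ recalled above, $c(w,w')=\deg(\phi_{w,w'})$.

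Next I would compute that degree by the local-degree formula (\cite{Hat02}, Proposition~2.30). The key input is the Lemma of Step~2 identifying $\Phi_w^{-1}(N\cdot w'b_0)$ with the union of the interiors of the two faces $\mathcal{F}_0^i=\{t_i=0\}$ and $\mathcal{F}_\pi^i=\{t_i=\pi\}$, whereas every other boundary face of $B_w$ lands in $\mathcal{S}_{w'}\setminus N\cdot w'b_0$ and so is crushed to the base point of $\sigma_{w'}$. Consequently a generic point of the target sphere $S_{w'}^{d-1}$ has exactly two preimages under $\phi_{w,w'}$, one on each distinguished face, and $\deg(\phi_{w,w'})=\deg(f_i^0)+\deg(f_i^\pi)$, the summands being the local degrees, i.e.\ the degrees of the restrictions $f_i^0=\phi_{w,w'}|_{\mathcal{F}_0^i}$ and $f_i^\pi=\phi_{w,w'}|_{\mathcal{F}_\pi^i}$ read as self-maps of $S^{d-1}$ (boundary of each face collapsed).

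The substantive step is to show that $f_i^0$ and $f_i^\pi$ are homeomorphisms, whence each has degree $\pm1$. For $f_i^0$ this is almost immediate: on $\{t_i=0\}$ one has $\psi_i(0)=1$ by Lemma~\ref{compact_parametrization_1}, so $\Phi_w$ restricted to this face is literally the characteristic map attached to the reduced expression $w'=r_1\cdots\widehat{r_i}\cdots r_n$, and composing with $\Phi_{w'}^{-1}$ gives a homeomorphism of spheres by Proposition~\ref{characteristic_map}. For $f_i^\pi$ the point is that $\psi_i(\pi)=m_{\alpha_i}\in M\subset K_s$ for all $s$ and that $m_{\alpha_i}b_0=b_0$; so I would commute $m_{\alpha_i}$ to the far right, replacing each later factor $\psi_j(t_j)$ by its conjugate $\widetilde{\psi}_j(t_j)=m_{\alpha_i}\psi_j(t_j)m_{\alpha_i}^{-1}\in K_j$, thereby obtaining once more a characteristic map for $\mathcal{S}_{w'}$; composing with $\Phi_{w'}^{-1}$ is again a homeomorphism. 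Thus $c(w,w')=\deg(f_i^0)+\deg(f_i^\pi)$ is the sum of two terms each equal to $\pm1$, hence is $0$ or $\pm2$.

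The main obstacle is exactly the verification that $f_i^\pi$ is a homeomorphism: one must check that after conjugating by $m_{\alpha_i}$ the expression $\psi_1(t_1)\cdots\widetilde{\psi}_{i+1}(t_{i+1})\cdots\widetilde{\psi}_n(t_n)\cdot b_0$ is still a genuine characteristic map for the same cell $\mathcal{S}_{w'}$ — injective on the interior of $B_{w'}$ and sending the boundary off $N\cdot w'b_0$ — which follows by rerunning the argument of Proposition~\ref{characteristic_map} for this slightly perturbed parametrization, using only that $m_{\alpha_i}$ normalizes each $K_j$ and fixes $b_0$. I would also stress that the two local degrees need not be equal nor opposite a priori; which of the values $0$ or $\pm2$ actually occurs is dictated by their relative sign, and that refinement is precisely what the remainder of the paper computes.
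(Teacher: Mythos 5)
Your proposal is correct and follows essentially the same route as the paper: reduce to the two faces $\mathcal{F}_0^i$ and $\mathcal{F}_\pi^i$ via the Step~2 lemma, apply the local degree formula so that $c(w,w')=\deg(f_i^0)+\deg(f_i^\pi)$, and observe that each restriction is a homeomorphism of spheres, hence of degree $\pm1$. Your explicit conjugation of $m_{\alpha_i}$ past the later factors to exhibit $f_i^\pi$ as (essentially) another characteristic map for $\mathcal{S}_{w'}$ is exactly the justification the paper leaves implicit here and carries out later via the map $\overline{m}_i$ in Proposition \ref{propdegresig}, so it is a welcome filled-in detail rather than a different argument.
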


In particular, in the case of $\mathbb{Z}_{2}$ coefficients all boundary
maps vanish.

\begin{coro}
The homology of $\mathbb{F}$ over $\mathbb{Z}_{2}$ is a vector space of
dimension $|\mathcal{W}|$.
\end{coro}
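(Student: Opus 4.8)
The plan is to deduce this immediately from the preceding theorem together with the standard machinery of cellular homology, so essentially no new geometry is required. First I would set up the cellular chain complex $\mathcal{C}$ with coefficients in $R=\mathbb{Z}_{2}$: by the constructions of the previous sections, $\mathcal{C}$ is the free $\mathbb{Z}_{2}$-module on the generators $\mathcal{S}_{w}$, $w\in\mathcal{W}$, one generator sitting in degree $\dim\mathcal{S}_{w}=\dim(N\cdot wb_{0})$. The Bruhat decomposition $\mathbb{F}=\coprod_{w\in\mathcal{W}}N\cdot wb_{0}$ exhibits exactly $|\mathcal{W}|$ cells (the factor $\mathcal{W}_{\Theta}$ is trivial since we are in the maximal flag manifold), so the total rank of $\mathcal{C}$ as a graded $\mathbb{Z}_{2}$-vector space is $|\mathcal{W}|$.

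Next I would invoke the theorem above: every structure constant $c(w,w^{\prime})$ of $\partial$ equals $0$ or $\pm 2$. Reducing modulo $2$, each $c(w,w^{\prime})$ vanishes in $\mathbb{Z}_{2}$, hence the boundary operator $\partial\colon\mathcal{C}\to\mathcal{C}$ is identically zero with $\mathbb{Z}_{2}$ coefficients. Consequently $\ker\partial=\mathcal{C}$ and $\mathrm{im}\,\partial=0$ in every degree, so that $H_{k}(\mathbb{F};\mathbb{Z}_{2})\cong\mathcal{C}_{k}$ for all $k$. Since cellular homology is isomorphic to singular homology (as recalled at the start of Section~\ref{sechommax}), summing over $k$ yields a $\mathbb{Z}_{2}$-vector space of total dimension equal to the number of cells, namely $|\mathcal{W}|$.

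The only points that need care are purely bookkeeping rather than substantive: one must check that the coefficients $c(w,w^{\prime})$ defined in Section~\ref{sechommax} over an arbitrary ring $R$ specialize correctly when $R=\mathbb{Z}_{2}$ — which is automatic since they are, by construction, the integer degrees $\deg(f_{i}^{0})+\deg(f_{i}^{\pi})$ reduced into $R$ — and that the cell count is $|\mathcal{W}|$ and not $|\mathcal{W}/\mathcal{W}_{\Theta}|$, which again is automatic for $\mathbb{F}$. I do not expect any genuine obstacle here: all the work is already contained in the theorem, and the corollary is just the observation that $\pm 2\equiv 0\pmod 2$ collapses the complex.
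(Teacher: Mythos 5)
Your proposal is correct and is essentially the paper's own argument: the preceding theorem gives $c(w,w^{\prime})\in\{0,\pm 2\}$, so over $\mathbb{Z}_{2}$ all boundary maps vanish and the homology coincides with the cellular chain complex, which is freely generated by the Schubert cells $\mathcal{S}_{w}$, $w\in\mathcal{W}$. The bookkeeping points you raise (reduction of the integer degrees into $R=\mathbb{Z}_{2}$ and the cell count being $|\mathcal{W}|$ for the maximal flag) are indeed automatic, exactly as you say.
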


\vspace{12pt}
\noindent
\textbf{Remark:} The above computations are particularly interesting when
the simple root $\alpha _{i}$ has multiplicity $\dim \mathfrak{g}_{\alpha
_{i}}=1$. If all the simple roots have multiplicity $\geq 2$ then the
boundary operator $\partial $ is identically zero and homology is freely
generated by the Schubert cells. This happens in the classical case of the
complex Lie algebras, where any root has (real) multiplicity two. An example
of a real Lie algebra where the simple roots have multiplicity $\geq 2$ is
the real form of $\mathfrak{sl}\left( n,\mathbb{C}\right) $ whose Satake
diagram is

\begin{center}
\begin{picture}(170,25)

\put(0,0){\setlength{\unitlength}{1pt}
\put(0,15){\circle*{6}}
\put(3,15){\line(1,0){20}}
\put(26,15){\circle{6}}
\put(29,15){\line(1,0){20}}
\put(52,15){\circle*{6}}
\put(55,15){\line(1,0){17}}

\put(72,15){\begin{picture}(24,3)
\put(7,0){\line(1,0){2}}
\put(11,0){\line(1,0){2}} 
\put(14,0){\line(1,0){2}}
\end{picture}}

\put(96,15){\line(1,0){17}}
\put(116,15){\circle*{6}}
\put(119,15){\line(1,0){20}}
\put(142,15){\circle{6}}
\put(145,15){\line(1,0){20}}
\put(168,15){\circle*{6}}
}
\end{picture}
\end{center}

\noindent
In this case the simple roots are complex and hence their multiplicities are 
$\geq 2$.

\subsection{Illustration\label{subsecsl3}}

In order to illustrate the above description of the boundary operator $%
\partial $ we consider here the maximal flag manifold $\mathbb{F}$ of the split real form of $\mathfrak{sl}\left( 3,\mathbb{R}\right)$. In
this case, the Weyl group is $S_{3}$, the permutation group in three
elements. The simple reflections are $(12)=r_{\alpha _{1,2}}=r_{1}$ and $%
(23)=r_{\alpha _{2,3}}=r_{2}$. Only $(13)$ has two reduced expressions: $%
(13)=(12)(23)(12)$ and $(13)=(23)(12)(23)$. We fix the following minimal
decompositions 
\begin{equation*}
1,(12),(23),(123)=(12)(23),(132)=(23)(12),(13)=(12)(23)(12).
\end{equation*}%
Let $A=E_{1,2}-E_{2,1}$ and $B=E_{2,3}-E_{3,2}$ be the matrices whose
exponentials provide parametrizations for the compact groups $K_{1}$ and $%
K_{2}$ respectively. With these choices, the characteristic maps are

\begin{enumerate}
\item $\Phi_1 (0) = b_0$.

\item $\Phi_{(12)}(t)= e^{tA} \cdot b_0$, $t \in [0,\pi]$.

\item $\Phi_{(23)}(t)= e^{tB} \cdot b_0$, $t \in [0,\pi]$.

\item $\Phi_{(123)}(t,s)=e^{tA}e^{sB}\cdot b_0$, $(t,s) \in [0,\pi]^2$.

\item $\Phi_{(132)}(t,s)=e^{tB}e^{sA}\cdot b_0$, $(t,s) \in [0,\pi]^2$.

\item $\Phi_{(13)}(t,s,z)=e^{tA}e^{sB}e^{zA}\cdot b_0$, $(t,s,z) \in
[0,\pi]^3$.
\end{enumerate}

Then we obtain expressions for $c(w,w^{\prime })$.

\begin{enumerate}
\item $c((12),1)=0$ and $c((23),1)=0$ since there is a unique $0$-cell.

\item $c((123),(12))=0$. Note that $(12)=(12)\widehat{(23)}$. So we need to
consider the degree of the two maps $f_{2}^{0},f_{2}^{\pi }:S^{1}\rightarrow
S^{1}$ defined by $f_{2}^{0}(t,0)=e^{tA}e^{0B}\cdot b_{0}=e^{tA}\cdot b_{0}$
and $f_{2}^{\pi }(t,\pi )=e^{tA}e^{\pi B}\cdot b_{0}=e^{tA}\cdot b_{0}$.
Their degrees are obtained by comparing the orientation in the respective
face of the boundary of the cube $[0,\pi ]^{2}$, which is $S^{1}$ oriented
counter-clockwise, with the orientation given by the attaching map $%
e^{tA}\cdot b_{0}$. %(see Picture \ref{fig:S(123)S(132)}). 
Hence, the degree of $f_{2}^{0}$ is $1$ since as $t$ increases the curve $%
(t,0)$ and $e^{tA}\cdot b_{0}$ go in the same direction as $\Phi _{(12)}$.
On the other hand the degree of $f_{2}^{\pi }$ is $-1$ since as $t$
increases, the curve $(t,0)$ and the image $e^{tA}\cdot b_{0}$ are in
opposite directions. Hence $c((123),(12))=+1+(-1)=0$.

\item $c((123),(23))=-2$. Here $(23)=\widehat{(12)}(23)$. So we consider the
degree of the two maps $f_{1}^{0},f_{1}^{\pi }:S^{1}\rightarrow S^{1}$ given
by $f_{1}^{0}(0,s)=e^{0A}e^{sB}\cdot b_{0}=e^{sB}\cdot b_{0}$ and $%
f_{1}^{\pi }(\pi ,s)=e^{\pi A}e^{sB}\cdot b_{0}=\exp (s\mathrm{Ad}(e^{\pi
A})B)\cdot b_{0}=e^{-sB}\cdot b_{0}$ because $\mathrm{Ad}(e^{\pi A})B=-B$.
Since $e^{-sB}\cdot b_{0}=e^{(\pi -s)B}e^{\pi B}\cdot b_{0}=e^{(\pi
-s)B}\cdot b_{0}$, the function $f_{1}^{\pi }$ defined in $[0,\pi ]$ is
given by $f_{1}^{\pi }(0,s)=e^{(\pi -s)B}\cdot b_{0}$. 
% (see Picture \ref%{fig:S(123)S(132)}). 
Hence the degree of $f_{1}^{0}$ is $-1$ as above. The degree of $f_{1}^{\pi
} $ is also $-1$ since it is the degree of the function $s\mapsto \pi -s$.
Hence $c((123),(12))=(-1)+(-1)=-2$.

\item $c((132),(121))=-2$ and $c((132),(23))=0$, which can be seen the same
as above.

\item $c((13),(123))=0$. Note that $(123)=(12)(23)\widehat{(12)}$. So we
consider the maps $f_{3}^{0},f_{3}^{\pi }:S^{2}\rightarrow S^{2}$.

\begin{enumerate}
\item $f_{3}^{0}(t,s,0)=e^{tA}e^{sB}e^{0A}\cdot b_{0}=e^{tA}e^{sB}\cdot
b_{0} $, and we have 
\begin{equation*}
\deg f_{3}^{0}=-1.
\end{equation*}%
In fact, the boundary of the cube $[0,\pi ]^{3}$ is $S^{2}$ oriented with
the normal vector pointing outwards. The face $(t,s,0)$ (in this order) when
viewed in the domain is negatively orientated while in the codomain the
orientation agrees with the $S^{2}$ orientation. Hence the degree is $-1$.

\item $f_{3}^{\pi }(t,s,\pi )=e^{tA}e^{sB}e^{\pi A}\cdot
b_{0}=e^{tA}e^{sB}\cdot b_{0}$ with 
\begin{equation*}
\deg f_{3}^{\pi }=1.
\end{equation*}%
In this case the face $(t,s,\pi )$ agrees with the positive orientation.
\end{enumerate}

\item $c((13),(132))=0$. Note that $(132)=\widehat{(12)}(23)(12)$. So we
consider the maps $f_{1}^{0},f_{1}^{\pi }:S^{2}\rightarrow S^{2}$.

\begin{enumerate}
\item $f_{1}^{0}(0,t,s)=e^{0A}e^{tB}e^{sA}\cdot b_{0}=e^{tB}e^{sA}\cdot
b_{0} $ with 
\begin{equation*}
\deg f_{3}^{0}=-1.
\end{equation*}%
In this case the face $(0,t,s)$ (in this order) in the domain hasa negative
orientation while in the codomain the orientation agrees with the positive
one. Hence the degree is $-1$.

\item $f_{1}^{\pi }(\pi ,t,s)=e^{\pi A}e^{tB}e^{sA}\cdot b_{0}=\exp
(-tB)e^{sA}\cdot b_{0}$ since $Ad(e^{\pi A}B)=-B$ and $Ad(e^{\pi A}A)=A$. We
want to describe this map with a domain in $[0,\pi ]^{2}$. So, first $\exp
(-tB)e^{sA}\cdot b_{0}=\exp ((\pi -t)B)e^{\pi B}e^{sA}\cdot b_{0}$. Finally,
since $Ad(e^{\pi B}A)=-A$ we get 
\begin{equation*}
\exp (-sB)e^{sA}\cdot b_{0}=e^{(\pi -s)B}e^{(\pi -s)A}\cdot b_{0}.
\end{equation*}%
Hence the degree of $f_{1}^{\pi }$ is the degree of $(t,s)\mapsto (\pi
-t,\pi -s)$. This degree is $+1$ since it preserves the orientation.
\end{enumerate}
\end{enumerate}

Summarizing, the boundary operator is given by 
\begin{itemize}
\item $\partial_3 \mathcal{S}_{(13)}=0$;
\item $\partial_2 \mathcal{S}_{(123)} = -2 \mathcal{S}_{(23)} $ and $\partial_2
\mathcal{S}_{(132)}= -2\mathcal{S}_{(12)}$;
\item $\partial_1 \mathcal{S}_{(12)} = \partial \mathcal{S}_{(23)}=0$.
\end{itemize}  
Hence the integer homology groups are
\begin{itemize}
\item $H_3(\mathbb{F},\mathbb{Z}) = \mathbb{Z}$ generated by $\mathcal{S}%
_{(13)}$.

\item $H_2(\mathbb{F},\mathbb{Z}) = 0$ ($\ker \partial_2 =0$).

\item $H_1(\mathbb{F},\mathbb{Z}) = \mathbb{Z}_2 \oplus \mathbb{Z}_2$ ($\ker
\partial_1$ is $\mathbb{Z}\oplus\mathbb{Z}$ and the image of $\partial_2$ is 
$2\mathbb{Z}\cdot \mathcal{S}_{(12)} \oplus 2\mathbb{Z}\cdot \mathcal{S}%
_{(23)}$).
\end{itemize}

\subsection{Algebraic expressions for the degrees}

Here we compute the coefficients $c(w,w^{\prime })$ in terms of the roots by
finding the degrees of the maps involved.

For a diffeomorphism $\varphi $ of the sphere its degree is local degree at
a point $x$ which in turn is the sign of the determinant $\det (d\varphi
_{x})$ with respect to a volume form of $S^{d}$. Let us apply this in our
context.

We let $w=r_{1}\cdots r_{n}$ and $w^{\prime }=r_{1}\cdots \hat{r_{i}}\cdots
r_{n}$ be reduced expressions, with $r_{i}=r_{\alpha _{i}}$ and assume
throughout that the simple root $\alpha _{i}$ has multiplicity $%
d_{i}=d_{\alpha _{i}}=1$.

We must find the degrees of $f_{i}^{0}$ and $f_{i}^{\pi }$ defined by

\begin{enumerate}
\item $f_i^0 (t_1, \ldots, 0, \ldots, t_n) = \Phi_{w^{\prime }}^{-1}\left(
\psi_1(t_1) \cdots 1 \cdots \psi_n(t_n) \cdot b_0 \right).$

\item $f_i^\pi (t_1, \ldots, \pi , \ldots, t_n) = \Phi_{w^{\prime
}}^{-1}\left( \psi_1(t_1) \cdots m_{\alpha_i} \cdots \psi_n(t_n) \cdot b_0
\right).$
\end{enumerate}

In these expressions $\Phi _{w^{\prime }}^{-1}$ is defined by a previously
chosen reduced expression $w^{\prime }=s_{1}\cdots s_{m}$ of $w^{\prime }$
which may be distinct of $w^{\prime }=r_{1}\cdots \hat{r_{i}}\cdots r_{n}$.
On the other hand $w^{\prime }=r_{1}\cdots \hat{r_{i}}\cdots r_{n}$ can be
used to define another characteristic map, which will be denoted by $\Psi
_{w^{\prime }}$. This new characteristic map define new functions

\begin{enumerate}
\item $p_i^0 (t_1, \ldots, 0, \ldots, t_n) = \Psi_{w^{\prime }}^{-1}\left(
\psi_1(t_1) \cdots 1 \cdots \psi_n(t_n) \cdot b_0 \right)$ and

\item $p_{i}^{\pi }(t_{1},\ldots ,\pi ,\ldots ,t_{n})=\Psi _{w^{\prime
}}^{-1}\left( \psi _{1}(t_{1})\cdots m_{\alpha _{i}}\cdots \psi
_{n}(t_{n})\cdot b_{0}\right) $.
\end{enumerate}

The two pair of functions are related by 
\begin{equation*}
f_{i}^{\epsilon }=\left( \Phi _{w^{\prime }}^{-1}\circ \Psi _{w^{\prime
}}\right) \circ p_{i}^{\epsilon }\hspace{1cm}\epsilon =0,\pi .
\end{equation*}

The composition $\Phi _{w^{\prime }}^{-1}\circ \Psi _{w^{\prime }}$ (also
understood as a map between spheres in which the boundary are collapsed to
points) are homeomorphisms of spheres and, hence, have degree $\pm 1$. Hence
we can concentrate on the computation of degrees of the ${p_{i}^{\epsilon }} 
$'s since the total degree will be multiplied by $\pm 1$.

Before getting these degrees we make the following discussion on the
orientation of the faces of the cube $\left[ -1,1\right] ^{d}$, centered at
the origin of $\mathbb{R}^{d}$, which is given with the basis $%
\{e_{1},\ldots ,e_{d}\}$.

Starting with the $(d-1)$-dimensional sphere $S^{d-1}$ we orient the tangent
space at $x\in S^{d-1}$ by a basis $\{f_{2},\ldots ,f_{d}\}$ such that $%
\{x,f_{2},\ldots ,f_{d}\}$ is positively oriented. The faces of $[-1,1]^{d}$
are oriented accordingly: Given a base vector $e_{j}$, we let $F_{j}^{-}$ be
the face perpendicular to $e_{j}$ that contains $-e_{j}$ and $F_{j}^{+}$ the
one that contains $e_{j}$. Then $F_{j}^{-}$ has the same orientation as the
basis $e_{1},\ldots ,\hat{e_{j}},\ldots ,e_{d}$ if $j$ is even ($%
-e_{j},e_{1},\ldots ,\hat{e_{j}},\ldots ,e_{d}$ is positively oriented in $%
\mathbb{R}^{d}$) and opposite orientation if $j$ is odd. Therefore the
orientation of $F_{j}^{-}$ is $\left( -1\right) ^{j}$ the orientation of $%
e_{1},\ldots ,\hat{e_{j}},\ldots ,e_{d}$. Analogously, the orientation of $%
F_{j}^{+}$ is $\left( -1\right) ^{j+1}$ the orientation of $e_{1},\ldots ,%
\hat{e_{j}},\ldots ,e_{d}$.

The following facts about the action of an element $m\in M$ will be used
below in the computation of the degrees.

\begin{lema}
\label{lemproperties}For a root $\alpha $ consider the action on $\mathbb{F}$
of $m=m_{\alpha }=\exp (\pi A_{\alpha })\in M$. Then

\begin{enumerate}
\item $mwb_{0}=wb_{0}$ and $mNm^{-1}=N$. Therefore $m$ leaves invariant any
Bruhat cell and hence any Schubert cell $\mathcal{S}_{w}$.

\item The restriction of $m$ to $N\cdot wb_{0}$ is a diffeomorphism.

\item The differential $dm_{wb_{0}}$ identifies to $\mathrm{Ad}\left(
m\right) $ restricted to the subspace%
\begin{equation*}
\sum_{\beta \in \Pi_w}\mathfrak{g}_{\beta }.
\end{equation*}
\end{enumerate}
\end{lema}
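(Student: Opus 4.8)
The plan is to verify the three items in order, using the explicit description $m_{\alpha} = \exp(\pi A_{\alpha}) = \exp(\pi i H_{\alpha}^{\vee})$ established in the preceding subsection, together with the fact that $m_{\alpha} \in M = Z_K(\mathfrak{a})$.

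For item (1), I would argue as follows. Since $m_{\alpha} \in M$, it centralizes $\mathfrak{a}$, hence for any $w \in \mathcal{W}$ the element $m_{\alpha}$ commutes (as an element acting on $\mathbb{F}$) with the $A$-action, and in particular fixes the $A$-fixed point $wb_0$; more directly, $m_{\alpha}$ normalizes $P$ because $M \subset P$ and $M$ normalizes $\mathfrak{p} = \mathfrak{m}\oplus\mathfrak{a}\oplus\mathfrak{n}$, so $m_{\alpha} w b_0 = w (w^{-1} m_{\alpha} w) b_0 = w b_0$ using that $w^{-1} m_{\alpha} w \in M \subset P$ (as $M$ is normalized by $M^*$). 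For $m N m^{-1} = N$: since $\mathrm{Ad}(m_{\alpha})$ preserves each root space $\mathfrak{g}_{\beta}$ (Lemma \ref{compact_parametrization_2}), it preserves $\mathfrak{n} = \sum_{\beta \in \Pi^+}\mathfrak{g}_{\beta}$, hence $m_{\alpha}$ normalizes $N$. Consequently $m_{\alpha}(N \cdot w b_0) = (m_{\alpha} N m_{\alpha}^{-1}) m_{\alpha} w b_0 = N \cdot w b_0$, so each Bruhat cell is invariant, and therefore (by taking closures, using continuity of the action of $m_{\alpha}$) each Schubert cell $\mathcal{S}_w$ is invariant.

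For item (2), since $m_{\alpha}$ acts on $\mathbb{F}$ as a diffeomorphism (it acts by an element of the Lie group $G$) and restricts to a self-map of the submanifold $N \cdot w b_0$ by item (1), this restriction is a diffeomorphism of $N \cdot w b_0$ onto itself; its inverse is the restriction of $m_{\alpha}^{-1}$, which also preserves the cell by the same argument.

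For item (3), the tangent space $T_{wb_0}\mathbb{F}$ is naturally identified with $\mathfrak{g}/\mathrm{Ad}(w)\mathfrak{p} \cong \mathrm{Ad}(w)(\mathfrak{n}^-)$, and the tangent space to the Bruhat cell $N\cdot wb_0 = W^u(wb_0)$ at its fixed point $wb_0$ is the subspace spanned by the root spaces $\mathfrak{g}_{\beta}$ with $\beta \in \Pi_w$ — this is exactly the unstable subspace of the gradient flow $\exp tH$ at $wb_0$, as recalled from \cite{DKV83}. Since $m_{\alpha}$ fixes $wb_0$, its differential at that point is the linear map induced by $\mathrm{Ad}(m_{\alpha})$ on $T_{wb_0}\mathbb{F}$; restricting to the invariant subspace $\sum_{\beta \in \Pi_w}\mathfrak{g}_{\beta}$ identifies $dm_{wb_0}$ on the tangent space of the cell with $\mathrm{Ad}(m_{\alpha})$ on that sum. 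I expect the main (minor) obstacle to be pinning down the canonical identification of $T_{wb_0}(N\cdot wb_0)$ with $\sum_{\beta\in\Pi_w}\mathfrak{g}_\beta$ compatibly with the $M$-action; this is handled by translating by $w$ to the origin $b_0$, where $T_{b_0}\mathbb{F} \cong \mathfrak{n}^-$ and the $N$-orbit directions at $b_0$ are visibly the appropriate root spaces, and then noting that $\mathrm{Ad}(w)$ carries $\Pi_{\mathrm{id}\text{-orbit data}}$ to the set $\Pi_w$ by the definition $\Pi_w = \Pi^+ \cap w\Pi^-$.
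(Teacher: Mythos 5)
Your proposal is correct and takes essentially the same route as the paper: items (1)--(2) follow from $\mathrm{Ad}(m_{\alpha})$ preserving every root space (so $m$ normalizes $N$ and fixes $wb_{0}$), and item (3) is the paper's own translation-to-the-origin computation showing $T_{wb_{0}}(N\cdot wb_{0})$ is spanned by $\mathfrak{g}_{\beta }\cdot wb_{0}$, $\beta \in \Pi _{w}$, on which $dm_{wb_{0}}$ acts as $\mathrm{Ad}(m)$. One cosmetic slip: after translating by $w^{-1}$ the relevant orbit at $b_{0}$ is that of $w^{-1}Nw$ (since $N\cdot b_{0}=\{b_{0}\}$), which is what your $\mathrm{Ad}(w)$ bookkeeping implicitly uses.
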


\begin{proof} 
Since $\mathrm{Ad}(m_{\alpha })\mathfrak{g}_{\beta }=\mathfrak{g}_{\beta }$, $\beta \in \Pi $
(cf. Lemma \ref{compact_parametrization_2}), the first and second statements follow easily. 

For the third statement we use the notation $X\cdot x=d/dt\left( \exp tX\right) _{t=0}$, $x\in \mathbb{F}$ and $%
X\in \mathfrak{g}$. Also, for $A\subset \mathfrak{g}$ write $A\cdot
x=\{X\cdot x:X\in A\}$.

Note that $N\cdot wb_{0}=w(w^{-1}Nw)\cdot b_{0}$, and the tangent space
to $(w^{-1}Nw)\cdot b_{0}$ at $b_{0}$ is spanned by $\mathfrak{g}_{\alpha
}\cdot b_{0}$ with $\alpha <0$ such that $\alpha =w^{-1}\beta $ and $\beta
>0 $, that is, $w\cdot \alpha >0$. Since $\left( dw\right) (\mathfrak{g}%
_{\alpha }\cdot b_{0})=\mathfrak{g}_{w\cdot \alpha }\cdot b_{0}$, it follows
that $T_{wb_{0}}\left( N\cdot wb_{0}\right) $ is spanned by $\mathfrak{g}%
_{\beta }\cdot b_{0}$ with $\beta =w\cdot \alpha >0$ such that $w^{-1}\cdot
\beta =\alpha <0$. Hence the result.
\end{proof}

The next statement computes the degree of ${p_{i}^{\epsilon }}$'s in terms
of Killing numbers.

\begin{prop}
\label{propdegresig}$\deg (p_{i}^{0})=(-1)^{I}$ and $\deg (p_{i}^{\pi
})=(-1)^{I+1+\sigma }$, where 
\begin{equation}
\sigma =\sigma \left( w,w^{\prime }\right) =\sum_{\beta \in \Pi _{u}}\frac{%
2\langle \alpha _{i},\beta \rangle }{\langle \alpha _{i},\alpha _{i}\rangle }%
\dim \mathfrak{g}_{\beta },\hspace{0.5cm}\Pi _{u}=\Pi ^{+}\cap u\Pi ^{-},%
\hspace{0.5cm}u=r_{i+1}\cdots r_{n},  \label{forsigmasoma}
\end{equation}%
and $I$ is the sum of the multiplicities of the roots $\alpha _{j}$ with $%
j\leq i$.
\end{prop}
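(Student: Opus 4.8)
The plan is to read off each of the two degrees as the sign of the Jacobian determinant of the corresponding homeomorphism of spheres at a conveniently chosen interior point, and to organise everything around the product structure of $\Psi _{w^{\prime }}$. Put $u=r_{i+1}\cdots r_{n}$ and $p=r_{1}\cdots r_{i-1}$, so that $w^{\prime }=r_{1}\cdots \hat{r_{i}}\cdots r_{n}$ is the reduced word $pu$; the characteristic map $\Psi _{w^{\prime }}$ built from this word factors through $B_{w^{\prime }}=B_{p}\times B_{u}$, with $B_{p}=B^{d_{1}}\times \cdots \times B^{d_{i-1}}$ and $B_{u}=B^{d_{i+1}}\times \cdots \times B^{d_{n}}$, as $\Psi _{w^{\prime }}(\mathbf{s},\mathbf{r})=g_{p}(\mathbf{s})\cdot \Xi (\mathbf{r})$, where $g_{p}(\mathbf{s})=\psi _{1}(s_{1})\cdots \psi _{i-1}(s_{i-1})\in K_{1}\cdots K_{i-1}$ and $\Xi (\mathbf{r})=\psi _{i+1}(r_{i+1})\cdots \psi _{n}(r_{n})\cdot b_{0}$ is itself a characteristic map of $\mathcal{S}_{u}$, to which Proposition \ref{characteristic_map} applies. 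Since $\alpha _{i}$ has multiplicity one, the $t_{i}$-coordinate is the $I$-th coordinate of the ambient cube, so $\mathcal{F}_{0}^{i}=F_{I}^{-}$ and $\mathcal{F}_{\pi }^{i}=F_{I}^{+}$; by the orientation convention fixed just above, their induced orientations are $(-1)^{I}$ and $(-1)^{I+1}$ times the standard orientation of $B_{w^{\prime }}$, that is, the $e_{1},\ldots ,\hat{e_{I}},\ldots ,e_{d}$ orientation.

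For $p_{i}^{0}$ this already suffices: since $\psi _{i}(0)=1$ we get $p_{i}^{0}(\mathbf{s},\mathbf{r})=\Psi _{w^{\prime }}^{-1}(g_{p}(\mathbf{s})\,\Xi (\mathbf{r}))=(\mathbf{s},\mathbf{r})$, the identity in these coordinates, so the only contribution is the orientation discrepancy of the face $F_{I}^{-}$, giving $\deg (p_{i}^{0})=(-1)^{I}$.

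For $p_{i}^{\pi }$ the key point is that $\psi _{i}(\pi )=m_{\alpha _{i}}$ (Lemma \ref{compact_parametrization_1}) and $m_{\alpha _{i}}\in M\subset P$ fixes $b_{0}$, so inserting $m_{\alpha _{i}}$ in the middle amounts to applying $m_{\alpha _{i}}$ to the $\mathcal{S}_{u}$-factor: $g_{p}(\mathbf{s})\,m_{\alpha _{i}}\,\psi _{i+1}(r_{i+1})\cdots \psi _{n}(r_{n})\,b_{0}=g_{p}(\mathbf{s})\cdot \bigl( m_{\alpha _{i}}\Xi (\mathbf{r})\bigr) $, and by Lemma \ref{lemproperties}(1) the point $m_{\alpha _{i}}\Xi (\mathbf{r})$ again lies in $\mathcal{S}_{u}$, in fact in $N\cdot ub_{0}$ when $\mathbf{r}$ is interior. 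Hence, in the product coordinates, $p_{i}^{\pi }$ equals $\mathrm{id}_{B_{p}}\times (\Xi ^{-1}\circ m_{\alpha _{i}}\circ \Xi )$ up to the orientation of the face $F_{I}^{+}$, where $m_{\alpha _{i}}$ is read as the self-homeomorphism it induces on $\sigma _{u}$ (it preserves $\mathcal{S}_{u}$ and each Bruhat cell inside it, hence descends to the quotient). As the local Jacobian of $\mathrm{id}_{B_{p}}\times g$ is block diagonal, $\deg (p_{i}^{\pi })=(-1)^{I+1}\deg (\Xi ^{-1}\circ m_{\alpha _{i}}\circ \Xi )$. To compute the last degree I would take the local degree at the interior point $\Xi ^{-1}(ub_{0})$: by Lemma \ref{lemproperties}(1) $m_{\alpha _{i}}$ fixes $ub_{0}$; by Lemma \ref{lemproperties}(2)--(3) its restriction to $N\cdot ub_{0}$ is a diffeomorphism with differential at $ub_{0}$ equal to $\mathrm{Ad}(m_{\alpha _{i}})$ on $\sum_{\beta \in \Pi _{u}}\mathfrak{g}_{\beta }$; and by Lemma \ref{compact_parametrization_2} this acts on $\mathfrak{g}_{\beta }$ by the scalar $(-1)^{\epsilon (\alpha _{i},\beta )}$, so $\mathrm{sign}\det \bigl( d(m_{\alpha _{i}})_{ub_{0}}\bigr) =(-1)^{\sigma }$ with $\sigma $ exactly as in (\ref{forsigmasoma}). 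Since $\Xi $ restricts to a diffeomorphism of the interior of $B_{u}$ onto $N\cdot ub_{0}$ (Proposition \ref{characteristic_map}(3)), the Jacobian factors of $\Xi $ and $\Xi ^{-1}$ cancel, the local degree of the composite is $(-1)^{\sigma }$, and as a homeomorphism of a sphere its degree equals this. Combining, $\deg (p_{i}^{\pi })=(-1)^{I+1}(-1)^{\sigma }=(-1)^{I+1+\sigma }$.

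The main obstacle is the orientation bookkeeping rather than any hard geometry: one must be careful that $m_{\alpha _{i}}$ genuinely descends to a self-map of the collapsed sphere $\sigma _{u}$ (it does, as it permutes the Bruhat cells inside $\mathcal{S}_{u}$, Lemma \ref{lemproperties}(1)), that the decomposition $B_{w^{\prime }}=B_{p}\times B_{u}$ is orientation-compatible with the chosen word $w^{\prime }=pu$, and that $\deg (\mathrm{id}_{B_{p}}\times g)=\deg (g)$ for a self-map $g$ of the collapsed cube $B_{u}/\partial B_{u}$; once these are in place the sign computation is immediate from Lemmas \ref{compact_parametrization_2} and \ref{lemproperties}.
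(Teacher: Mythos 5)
Your proposal is correct and follows essentially the same route as the paper: you read $\deg(p_i^0)$ off the face-orientation convention as $(-1)^I$, and for $p_i^\pi$ you factor out the first $i-1$ coordinates and reduce to the induced self-map of the $\mathcal{S}_u$-cell (the paper's $\overline{m}_i$, your $\Xi^{-1}\circ m_{\alpha_i}\circ\Xi$), whose degree is the local degree at the fixed point $ub_0$, computed as $\mathrm{sgn}\det\bigl(\mathrm{Ad}(m_{\alpha_i})|_{\sum_{\beta\in\Pi_u}\mathfrak{g}_\beta}\bigr)=(-1)^\sigma$ via Lemmas \ref{lemproperties} and \ref{compact_parametrization_2}. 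The only cosmetic difference is that you make the product decomposition $B_{w'}=B_p\times B_u$ and the conjugation by $\Xi$ explicit, which the paper leaves implicit.
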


\begin{proof}
The map $p_{i}^{0}$ is the projection of the face of a $d$-dimensional cube 
onto the face of a $(d-1)$-dimensional cube, i.e., in coordinates 
\[(t_{1},\ldots ,0,\ldots ,t_{n}) \mapsto (t_{1},\ldots ,\hat{t_{i}},\ldots ,t_{n}).\]
Note that with respect to the basis $e_{1},\ldots ,e_{d}$ the $t_{i}$-coordinate appears in
the $I$-th position. Hence, by the orientation of the cube, discussed above,
the projection preserves or reverses orientation if $I$ is even or odd,
respectively. Therefore, $\deg (p_{i}^{0})=(-1)^{I}$.

To get $\deg (p_{i}^{\pi })$ write $m_{i}=m_{\alpha _{i}}$ for the element
of $M$ appearing in the expression of $p_{i}^{\pi }$. Its action on $\mathbb{%
F}$ leaves invariant any Bruhat cell $N\cdot wb_{0}$ (because $%
m_{i}Nm_{i}^{-1}=N$ and $m_{i}wb_{0}=wb_{0}$), and hence any Schubert cell.
Moreover, the restriction of $m_{i}$ to $N\cdot wb_{0}$ is a diffeomorphism
(given by the conjugation $y\in N\mapsto m_{i}ym_{i}^{-1}$).

In particular, we restrict the action of $m_{i}$ to the cell $\mathcal{S}%
_{u} $, $u=r_{i+1}\cdots r_{n}$. Using the parametrization of this cell
by the cube $B_{u}$ we get 
\begin{equation*}
m_{i}\psi _{i+1}(t_{i+1})\cdots \psi _{n}(t_{n})\cdot b_{0}=\psi
_{i+1}(s_{i+1})\cdots \psi _{n}(s_{n})\cdot b_{0},
\end{equation*}%
with $(s_{i+1},\ldots ,s_{n})=\overline{m}_{i}(t_{i+1},\ldots ,t_{n})$ with $%
\overline{m}_{i}:B_{u}\rightarrow B_{u}$ continuous and a diffeomorphism of
the interior of $B_{u}$.

Hence, $p_{i}^{\pi }(t_{1},\ldots ,\pi ,\ldots ,t_{n})$ becomes the
projection of the $i-1$ first coordinates and the composition of $\overline{m%
}_{i}$ with the projection of the last $j$-coordinates, $j=i+1,\ldots ,n$.
From the choice of the orientation of $B_{w}=\left[ 0,\pi \right] ^{d}$, the
face $(t_{1},\ldots ,\pi ,\ldots ,t_{n})$ of $B_{w}$ has orientation $%
(-1)^{I+1}$ with respect to the orientation of the coordinates $%
(t_{1},\ldots ,\widehat{t_{i}},\ldots ,t_{n})$. Hence, after collapsing the
boundary to a point, we get the degree 
\begin{equation*}
\deg p_{i}^{\pi }=(-1)^{I+1}\deg \overline{m}_{i}.
\end{equation*}%
The degree of $\overline{m}_{i}$ equals its local degree at one point which
in turn is sign of the determinant of the differential $d({m}_{i})_{ub_{0}}$
restriced to the tangent space to Bruhat cell $N\cdot ub_{0}$ at $ub_{0}$: 
\begin{equation*}
\deg (p_{i}^{\pi })=(-1)^{I+1}\mathrm{sgn}[\det \left( d({m}%
_{i})_{ub_{0}}|T_{ub_{0}}(N\cdot ub_{0})\right) ].
\end{equation*}%
By the third statement in the Lemma \ref{lemproperties}, $T_{ub_{0}}(N\cdot ub_{0})$
identifies to $\sum_{\beta \in \Pi_w}\mathfrak{g}_{\beta }$.

Once we have the generators $\mathfrak{g}_{\beta }\cdot ub_{0}$, $\beta \in
\Pi_w$ for $T_{ub_{0}}(N\cdot ub_{0})$ together with the action of $\mathrm{%
Ad}(m_{i})$ over $\mathfrak{g}_{\beta }$ given by the Lemma \ref%
{compact_parametrization_2}, $\mathrm{Ad}(m_{\alpha })_{\left\vert \mathfrak{%
g}_{\beta }\right. }=(-1)^{\epsilon (\alpha ,\beta )}\mathrm{id}$ we
conclude that the signal of $\det \left( d({m}_{i})_{ub_{0}}|T_{ub_{0}}(N%
\cdot ub_{0})\right) =(-1)^{\sigma }$ where 
\begin{equation*}
\sigma =\sum_{\beta \in \Pi _{u}}\frac{2\langle \alpha _{i},\beta \rangle }{%
\langle \alpha _{i},\alpha _{i}\rangle }\dim \mathfrak{g}_{\beta }.
\qedhere
\end{equation*}
\end{proof}

Summarizing, we have the following algebraic expression for the coefficient $%
c\left( w,w^{\prime }\right) $.

\begin{teo}
\label{teoforcw}Let be $\sigma \left( w,w^{\prime }\right) $ be defined as
in (\ref{forsigmasoma}). Then 
\begin{equation*}
c(w,w^{\prime })=\deg \left( \Phi _{w^{\prime }}^{-1}\circ \Psi _{w^{\prime
}}\right) (-1)^{I}(1-(-1)^{\sigma \left( w,w^{\prime }\right) }).
\end{equation*}
\end{teo}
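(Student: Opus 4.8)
The plan is to assemble Theorem~\ref{teoforcw} directly from the pieces already in place, since all the hard analytic and combinatorial work has been done in Proposition~\ref{propdegresig} and the discussion of degrees in Step~3. First I would recall the factorization $f_i^{\epsilon} = (\Phi_{w'}^{-1}\circ \Psi_{w'})\circ p_i^{\epsilon}$ for $\epsilon = 0, \pi$, which was established just before Proposition~\ref{propdegresig}. Since degree is multiplicative under composition of maps between spheres, this gives $\deg(f_i^{\epsilon}) = \deg(\Phi_{w'}^{-1}\circ\Psi_{w'})\cdot \deg(p_i^{\epsilon})$. The key observation is that the \emph{same} homeomorphism $\Phi_{w'}^{-1}\circ\Psi_{w'}$ appears in both $f_i^0$ and $f_i^{\pi}$, so its degree factors out of the sum.

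Next I would simply substitute the values from Proposition~\ref{propdegresig}: $\deg(p_i^0) = (-1)^I$ and $\deg(p_i^{\pi}) = (-1)^{I+1+\sigma}$ where $\sigma = \sigma(w,w')$ is as in (\ref{forsigmasoma}). Then, using the description of the boundary coefficient from Step~3 (the Summarizing paragraph), $c(w,w') = \deg(f_i^0) + \deg(f_i^{\pi})$, so
\begin{equation*}
c(w,w') = \deg(\Phi_{w'}^{-1}\circ\Psi_{w'})\left( (-1)^I + (-1)^{I+1+\sigma} \right) = \deg(\Phi_{w'}^{-1}\circ\Psi_{w'})(-1)^I\left(1 - (-1)^{\sigma}\right),
\end{equation*}
which is exactly the claimed formula. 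The verification that $(-1)^{I+1+\sigma} = -(-1)^I(-1)^{\sigma}$ is the only arithmetic, and it is immediate.

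There is really no major obstacle here; the statement is a bookkeeping corollary of the two preceding results. The one point that deserves a sentence of care is why it is legitimate to add $\deg(f_i^0)$ and $\deg(f_i^{\pi})$ to obtain $c(w,w')$: this was justified in Step~3 via the local-degree computation (Hatcher, Proposition~2.30), where the generic fiber $\phi_{w,w'}^{-1}(\xi)$ consists of one point $x_1$ in the face $\mathcal{F}_0^i$ and one point $x_2$ in the face $\mathcal{F}_{\pi}^i$, with local degrees equal to $\deg(f_i^0)$ and $\deg(f_i^{\pi})$ respectively. So the proof amounts to: invoke that additivity, invoke multiplicativity of degree under the composition $f_i^{\epsilon} = (\Phi_{w'}^{-1}\circ\Psi_{w'})\circ p_i^{\epsilon}$, plug in Proposition~\ref{propdegresig}, and factor. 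A remark worth appending is that the sign $\deg(\Phi_{w'}^{-1}\circ\Psi_{w'}) = \pm 1$ depends only on the two chosen reduced expressions of $w'$ (the fixed one $s_1\cdots s_m$ and the induced one $r_1\cdots\hat{r_i}\cdots r_n$) and not on $w$, so in particular $c(w,w')$ vanishes precisely when $\sigma(w,w')$ is even and equals $\pm 2$ otherwise, recovering the earlier theorem on the $0, \pm 2$ dichotomy.
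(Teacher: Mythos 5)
Your argument is correct and is essentially the paper's own: the theorem is obtained exactly by combining the additivity $c(w,w')=\deg(f_i^0)+\deg(f_i^\pi)$ from Step 3, the factorization $f_i^{\epsilon}=(\Phi_{w'}^{-1}\circ\Psi_{w'})\circ p_i^{\epsilon}$ with multiplicativity of degree, and the values of $\deg(p_i^0)$ and $\deg(p_i^{\pi})$ from Proposition \ref{propdegresig}, followed by the same sign factoring. Nothing is missing; your closing remark on when $c(w,w')$ is $0$ or $\pm 2$ is also consistent with the paper.
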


We will now derive another formula for $\sigma \left( w,w^{\prime }\right) $
that does not depend on the reduced expressions of $w$ and $w^{\prime }$.
This formula is the same one given by Theorem A of \cite{Koc95}.

For $w\in \mathcal{W}$, let 
\begin{equation*}
\phi (w)=\sum_{\beta \in \Pi _{w}}\dim \mathfrak{g}_{\beta }\cdot \beta
\end{equation*}%
be the sum of roots in $\Pi _{w}=\Pi ^{+}\cap w\Pi ^{-}$ counted with their
multiplicity.

As before let $w=r_{1}\cdots r_{n}$ and $w^{\prime }=r_{1}\cdots \widehat{%
r_{i}}\cdots r_{n}$ be reduced expressions.

\begin{prop}
Let $\beta $ be the unique root (not necessarily simple) such that $%
w=r_{\beta }w^{\prime }$, that is, $\beta =r_{1}\cdots r_{i-1}\alpha _{i}$.
Then 
\begin{equation*}
\phi (w)-\phi (w^{\prime })=(1-\sigma )\beta
\end{equation*}%
where $\sigma =\sigma (w,w^{\prime })$ is the sum (\ref{forsigmasoma}).
\end{prop}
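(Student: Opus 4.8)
The plan is to compute both $\phi(w)$ and $\phi(w')$ directly from the reduced expressions, using the standard description of $\Pi_w$ recalled in the Notation section. Write $u = r_{i+1}\cdots r_n$ and $v = r_1\cdots r_{i-1}$, so that $w = v r_i u$ and $w' = v u$, and $\beta = v\alpha_i$. The key combinatorial fact is that the list $\Pi_w = \{\alpha_1, r_1\alpha_2, \ldots, r_1\cdots r_{n-1}\alpha_n\}$ splits according to whether the index is $< i$, equal to $i$, or $> i$. For indices $j < i$ the terms $r_1\cdots r_{j-1}\alpha_j$ are literally the same in $\Pi_w$ and in $\Pi_{w'}$ (the expression $r_1\cdots\widehat{r_i}\cdots r_n$ agrees with $r_1\cdots r_n$ in its first $i-1$ letters). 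For $j = i$ the extra term in $\Pi_w$ is exactly $r_1\cdots r_{i-1}\alpha_i = \beta$, contributing $\dim\mathfrak{g}_\beta\cdot\beta$ (and note $\dim\mathfrak{g}_\beta = \dim\mathfrak{g}_{\alpha_i} = 1$ under our running hypothesis $d_i=1$, though I will keep the multiplicity symbolic to see the general pattern). For $j > i$, the term in $\Pi_w$ is $r_1\cdots r_{i-1}r_i\,(r_{i+1}\cdots r_{j-1}\alpha_j) = r_\beta\bigl(r_1\cdots r_{i-1}(r_{i+1}\cdots r_{j-1}\alpha_j)\bigr)$, i.e. it is $r_\beta$ applied to the corresponding term of $\Pi_{w'}$.

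So, separating the sum defining $\phi$ into these three blocks, the blocks with $j<i$ cancel between $\phi(w)$ and $\phi(w')$, the $j=i$ block gives $\dim\mathfrak{g}_\beta\cdot\beta$, and the $j>i$ block contributes $r_\beta\bigl(\phi(w') - \phi_{<i}\bigr)$ to $\phi(w)$ against $\phi(w')-\phi_{<i}$ to $\phi(w')$, where $\phi_{<i}$ is the partial sum over $j\le i-1$ (which equals $\phi(v)$, a sum over roots fixed by $r_\beta$ — more precisely each such root is positive and $w'$ sends it negative, and one checks $r_\beta$ acts suitably; the cleanest route is to note that the $j>i$ terms of $\Pi_{w'}$ are precisely $v u\Pi^- \cap \Pi^+$ intersected with the appropriate piece, and these are exactly the roots in $\Pi_{w'}$ of the form $v(\text{something in }\Pi_u)$). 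Collecting, one gets
\begin{equation*}
\phi(w) - \phi(w') = \dim\mathfrak{g}_\beta\cdot\beta + (r_\beta - \mathrm{id})\Bigl(\sum_{\gamma}\dim\mathfrak{g}_\gamma\cdot\gamma\Bigr),
\end{equation*}
where $\gamma$ ranges over the $j>i$ block, i.e. over $v\Pi_u$, so $\gamma = v\delta$ with $\delta\in\Pi_u$ and $\dim\mathfrak{g}_\gamma = \dim\mathfrak{g}_\delta$. Now $r_\beta\gamma - \gamma = -\dfrac{2\langle\beta,\gamma\rangle}{\langle\beta,\beta\rangle}\beta$, and since $\beta = v\alpha_i$, $\gamma = v\delta$ and $v$ is an isometry, $\dfrac{2\langle\beta,\gamma\rangle}{\langle\beta,\beta\rangle} = \dfrac{2\langle\alpha_i,\delta\rangle}{\langle\alpha_i,\alpha_i\rangle}$. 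Hence the correction term is $-\Bigl(\sum_{\delta\in\Pi_u}\dfrac{2\langle\alpha_i,\delta\rangle}{\langle\alpha_i,\alpha_i\rangle}\dim\mathfrak{g}_\delta\Bigr)\beta = -\sigma\beta$ by the definition (\ref{forsigmasoma}). Together with the $j=i$ term $\dim\mathfrak{g}_\beta\cdot\beta = \beta$ (as $d_i = 1$), this yields $\phi(w) - \phi(w') = (1-\sigma)\beta$, as claimed.

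The main obstacle is bookkeeping in the $j>i$ block: one must be sure that the terms $r_1\cdots r_{j-1}\alpha_j$ for $j>i$, read off from the reduced expression of $w$, really do equal $r_\beta$ applied to the corresponding terms read off from the reduced expression $w' = r_1\cdots\widehat{r_i}\cdots r_n$, and that no term is created or destroyed (length is additive here, $\ell(w) = \ell(w')+1$, so $|\Pi_w| = |\Pi_{w'}|+1$ and the only new element is $\beta$; this is a clean sanity check). One should also verify that the $j<i$ partial sums genuinely coincide — immediate, since the two reduced words share their first $i-1$ letters — so these contribute nothing to the difference. Everything else is the one-line reflection identity $r_\beta\gamma - \gamma = -\frac{2\langle\beta,\gamma\rangle}{\langle\beta,\beta\rangle}\beta$ and the $v$-invariance of the inner product. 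I would present the argument by first fixing the three-block decomposition of $\Pi_w$ and $\Pi_{w'}$, then doing the subtraction, then invoking the reflection identity.
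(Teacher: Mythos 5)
Your argument is correct and is essentially the same as the paper's proof: both decompose $\Pi_w$ and $\Pi_{w'}$ via the shared first $i-1$ letters, identify the new root $\beta=r_1\cdots r_{i-1}\alpha_i$, relate the remaining terms through the conjugation $(r_1\cdots r_{i-1})r_i(r_1\cdots r_{i-1})^{-1}=r_\beta$, and finish with the reflection identity together with $\mathcal{W}$-invariance of the inner product and of multiplicities. The only cosmetic difference is that the paper applies $r_i$ inside and then transports by $r_1\cdots r_{i-1}$, whereas you apply $r_\beta-\mathrm{id}$ after transporting, which is the same computation.
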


\begin{proof}
By the reduced expressions $w^{-1}=r_{n}\cdots r_{1}$ and $w^{\prime
-1}=r_{n}\cdots \hat{r_{i}}\cdots r_{1}$ and $u^{-1}=r_{n}\cdots r_{i+1}$ we
obtain the sets

\begin{enumerate}
\item $\Pi _{w}=\{\alpha _{1},r_{1}\alpha _{2},\ldots ,r_{1}\cdots r_{i-1}\alpha _{i},r_{1}\cdots r_{i}\alpha
_{i+1},\ldots ,r_{1}\cdots r_{n-1}\alpha _{n}\}$.

\item $\Pi _{w^{\prime }}=\{\alpha _{1},r_{1}\alpha _{2},\ldots , r_{1}\cdots r_{i-1}\alpha _{i+1},\ldots ,r_{1}\cdots 
\hat{r_{i}}\cdots r_{n-1}\alpha _{n}\}$.

\item $\Pi _{u}=\{\alpha _{i+1},r_{i+1}\alpha _{i+2},\ldots ,r_{i+1}\cdots
r_{n-1}\alpha _{n}\}$.
\end{enumerate}

The first $(i-1)$ roots of $\Pi _{w}$ and $\Pi _{w^{\prime }}$ coincide. The
remaining ones are related by the equalities 
\begin{equation*}
(r_{1}\cdots r_{i-1})r_{i}\cdots r_{j}\alpha _{j+1}=r_{\beta }(r_{1}\cdots
r_{i-1})r_{i+1}\cdots r_{j}\alpha _{j+1} \quad,\quad j=i,\ldots ,n-1,
\end{equation*}%
because $(r_{1}\cdots r_{i-1})r_{i}(r_{1}\cdots r_{i-1})^{-1}=r_{r_{1}\cdots
r_{i-1}\alpha _{i}}=r_{\beta }$. It follows that the remaining roots $r_{1}\cdots
r_{j}\alpha _{j+1}$ and the roots $r_{1}\cdots \hat{r_{i}}\cdots r_{j}\alpha
_{j+1}$ have the same multiplicity $d_{j}$, $j=i,\ldots ,n-1$. Write $\gamma
_{j}=r_{i+1}\cdots r_{j}\alpha _{j+1}$, so that $\Pi _{u}=\{\gamma
_{i},\gamma _{i+1},\ldots ,\gamma _{n-1}\}$. Then%
\begin{equation}
\phi (w)-\phi (w^{\prime })=\beta +\sum_{j=i}^{n-1}d_{j}(r_{1}\cdots
r_{i-1})\left( r_{i}(\gamma _{j})-\gamma _{j}\right)  \label{root_equation}
\end{equation}%
because $\beta =r_{1}\cdots r_{i-1}\alpha _{i}$ has multiplicity $1$ as $%
\alpha _{i}$.

Since $r_{i}(\gamma _{j})-\gamma _{j}=-\displaystyle\frac{2\langle \alpha
_{i},\gamma _{j}\rangle }{\langle \alpha _{i},\alpha _{i}\rangle }\alpha
_{i} $ we rewrite (\ref{root_equation}) as 
\begin{eqnarray}
\phi (w)-\phi (w^{\prime }) &=&\left( 1-\sum_{j=i}^{n-1}d_{j}\frac{2\langle
\alpha _{i},\gamma _{j}\rangle }{\langle \alpha _{i},\alpha _{i}\rangle }%
\right) \beta  \label{root_equation_2} \\
&=&(1-\sigma )\beta  \notag
\end{eqnarray}
concluding the proof.
\end{proof}

Combining the above proposition with Theorem \ref{teoforcw} we get
immediately the following formula for $c\left( w,w^{\prime }\right) $ (cf. 
\cite{Koc95}, Theorem A).

\begin{teo}
\label{teoforcw1}%
\begin{equation}
c(w,w^{\prime })=\deg \left( \Phi _{w^{\prime }}^{-1}\circ \Psi _{w^{\prime
}}\right) (-1)^{I}(1+(-1)^{\kappa(w,w')})  \label{forcewseg}
\end{equation}%
where $\kappa(w,w')$ is the integer defined by $\phi (w)-\phi (w^{\prime })=\kappa(w,w')\cdot
\beta $ and $\beta $ is the unique root such that $w=r_{\beta }w^{\prime }$.
\end{teo}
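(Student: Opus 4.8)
The plan is to obtain Theorem \ref{teoforcw1} as a direct consequence of Theorem \ref{teoforcw} together with the Proposition immediately preceding this statement, which evaluates $\phi(w)-\phi(w')$. No new geometric input is needed; the content is a translation of the sign factor $1-(-1)^{\sigma}$ into the sign factor $1+(-1)^{\kappa}$.

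First I would recall the formula already established in Theorem \ref{teoforcw},
\[
c(w,w')=\deg\left(\Phi_{w'}^{-1}\circ\Psi_{w'}\right)(-1)^{I}\left(1-(-1)^{\sigma(w,w')}\right),
\]
with $\sigma(w,w')$ the sum in (\ref{forsigmasoma}) and $I$ the sum of the multiplicities of the roots $\alpha_j$ with $j\le i$, so that only the last factor has to be rewritten. Next I would invoke the preceding Proposition: for $\beta=r_{1}\cdots r_{i-1}\alpha_{i}$, which is the unique root with $w=r_{\beta}w'$ (uniqueness being the fact noted after Proposition \ref{propappendix}, coming from $\ell(w)=\ell(w')+1$), one has $\phi(w)-\phi(w')=(1-\sigma(w,w'))\beta$. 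Since $\beta\neq 0$, this shows at once that the integer $\kappa(w,w')$ defined by $\phi(w)-\phi(w')=\kappa(w,w')\,\beta$ is well defined, and that $\kappa(w,w')=1-\sigma(w,w')$; in particular $\sigma(w,w')$ is itself an integer, so the parity manipulation below is legitimate.

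The final step is an elementary parity computation. From $\sigma=1-\kappa$ we get
\[
(-1)^{\sigma}=(-1)^{1-\kappa}=-(-1)^{-\kappa}=-(-1)^{\kappa},
\]
using $(-1)^{-\kappa}=(-1)^{\kappa}$ for any integer $\kappa$. Hence $1-(-1)^{\sigma}=1+(-1)^{\kappa(w,w')}$, and substituting this into the displayed formula from Theorem \ref{teoforcw} gives exactly (\ref{forcewseg}).

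The only point demanding any care is bookkeeping of conventions: one must check that the root $\beta$ named in the statement of Theorem \ref{teoforcw1} coincides with the $\beta=r_{1}\cdots r_{i-1}\alpha_{i}$ used in the Proposition, which is immediate from the identity $r_{\beta}=(r_{1}\cdots r_{i-1})r_{i}(r_{1}\cdots r_{i-1})^{-1}$ together with $w'=r_{1}\cdots\widehat{r_{i}}\cdots r_{n}$ and $w=r_{1}\cdots r_{n}$. Beyond that there is no obstacle; this theorem is genuinely a corollary obtained by substitution, and its role is to present $c(w,w')$ in a form free of any chosen reduced expressions, matching Theorem A of \cite{Koc95}.
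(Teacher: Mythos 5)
Your proposal is correct and is exactly the paper's argument: the paper obtains Theorem \ref{teoforcw1} by ``combining the above proposition with Theorem \ref{teoforcw}'', i.e.\ substituting $\kappa(w,w')=1-\sigma(w,w')$ from the identity $\phi(w)-\phi(w')=(1-\sigma)\beta$ and using the parity identity $1-(-1)^{\sigma}=1+(-1)^{\kappa}$. Your added checks (well-definedness of $\kappa$ and that $\beta=r_1\cdots r_{i-1}\alpha_i$ is the root with $w=r_\beta w'$) are harmless bookkeeping the paper leaves implicit.
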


\vspace{12pt}

\noindent
\textbf{Remark:} If $w=r_{1}\cdots r_{n}$ and $w^{\prime }=r_{1}\cdots
r_{i-1}$ then $c(w,w^{\prime })=0$ because $m_{\alpha _{n}}$ does not affect
the computations of the degrees (see Proposition \ref{propdegresig}).

\vspace{12pt}
\noindent
\textbf{Example of $\mathrm{Sl}(3,\mathbb{R})$:} Let us use Formula (\ref{forcewseg}) to rederive the
homology of the maximal flag of $\mathrm{Sl}(3,\mathbb{R})$, the split real form of the algebra whose  Dynkin diagram is $A_2$. Let fix the
same reduced expressions for elements in $\mathcal{W}$ and note that the
unique element which has more than one reduced expression is $(13)$ which
implies that the factor $\left( \Phi _{w^{\prime }}^{-1}\circ \Psi
_{w^{\prime }}\right) $ is $1$ in all cases. In this case, we have the
following table \ref{table1} which determines completely the coefficients $c(w,w^{\prime })$, as
in Subsection \ref{subsecsl3}.

\begin{table}[h]\label{table1}
  \centering
    \begin{tabular*}{0.4 \textwidth}{||@{\extracolsep{\fill}}c|c|c||}
\hline
\multicolumn{3}{|c|}{Maximal Flag of $A_2$} \\ \hline
${\mathcal{W}}$ & ${\Pi _{w}}$ & ${\phi (w)}$ \\ \hline
~$1$ & $\emptyset $ & $0$ \\ \hline
~$(12)$ & $\alpha _{1}$ & $\alpha _{1}$ \\ \hline
~$(23)$ & $\alpha _{2}$ & $\alpha _{2}$ \\ \hline
~$(123)$ & $\alpha _{1},\alpha _{1}+\alpha _{2}$ & $2\alpha _{1}+\alpha _{2}$
\\ \hline
~$(132)$ & $\alpha _{2},\alpha _{1}+\alpha _{2}$ & $\alpha _{1}+2\alpha _{2}$
\\ \hline
~$(13)$ & $\Pi ^{+}$ & $2\alpha _{1}+2\alpha _{2}$ \\ \hline
\end{tabular*}
\vspace{12pt}

  \caption{Homology of the Maximal Flag of $A_2$}
\end{table}

For instance, let us compute $\partial_3 \mathcal{S}_{(13)}=0$. According to the table \ref{table1}, $\sigma((13),(123))=2(\alpha_1+\alpha_2)-(2\alpha_1+\alpha_2)= \alpha_2$ and $\sigma((13),(132))= 2(\alpha_1+\alpha_2)-(\alpha_1+2\alpha_2)=\alpha_1$. It implies that $\kappa((13),(123))=\kappa((13),(132))=1$ by which we conclude that $c((13),(123))=c((13),(132))=0$.

\vspace{12pt}
\noindent

\textbf{Example of $G_2$:} Let us apply the results above for the two groups with Dynkin diagram $G_{2}$, namely the complex group and the split real form. In the complex case, we already have $\partial =0$. Now we proceed to the real case. Let $\Sigma =\{\alpha _{1},\alpha _{2}\}$ be the simple roots. The set $\Pi ^{+}\setminus \Sigma =\{\alpha _{3}=\alpha
_{2}+\alpha _{1},\alpha _{4}=\alpha _{1}+2\alpha _{2},\alpha _{5}=\alpha
_{1}+3\alpha _{2},\alpha _{6}=2\alpha _{1}+3\alpha _{2}\}$ contains the remaining positive roots. 
The Weyl group with the respective fixed reduced expressions is $\mathcal{W}%
=\{1,r_{1},r_{2},s_{1}=r_{1}r_{2},s_{2}=r_{2}r_{1},r_{1}s_{2},r_{2}s_{1},s_{1}^{2},s_{2}^{2},r_{1}s_{2}^{2},r_{2}s_{1}^{2},s_{1}^{3}\}$, where $r_{i}=r_{\alpha _{i}}$ are the simple reflections and $%
s_{1}^{3}=s_{2}^{3}$ is the unique element with two different minimal
decompositions. The next table \ref{table2} presents the data useful to compute the homology coefficients.

\begin{table}[h]\label{table2}
  \centering
\begin{tabular*}{0.45 \textwidth}{||@{\extracolsep{\fill}}c|c|c||}
\hline
\multicolumn{3}{|c|}{Maximal flag of $G_{2}$} \\ \hline
${\mathcal{W}}$ & ${{\Pi _{w}}}$ & ${\phi (w)}$ \\ \hline
~$1$ & $\emptyset $ & $0$ \\ \hline
~$r_{1}$ & $\alpha _{1}$ & $\alpha _{1}$ \\ \hline
~$r_{2}$ & $\alpha _{2}$ & $\alpha _{2}$ \\ \hline
~$s_{1}$ & $\alpha _{1},\alpha _{3}$ & $2\alpha _{1}+\alpha _{2}$ \\ \hline
~$s_{2}$ & $\alpha _{2},\alpha _{5}$ & $\alpha _{1}+4\alpha _{2}$ \\ \hline
~$r_{1}s_{2}$ & $\alpha _{1},\alpha _{3},\alpha _{6}$ & $4\alpha
_{1}+4\alpha_{2}$ \\ \hline
~$r_{2}s_{1}$ & $\alpha _{2},\alpha _{5},\alpha _{4}$ & $2\alpha
_{1}+6\alpha_{2}$ \\ \hline
~$s_{1}^{2}$ & $\alpha _{1},\alpha _{3},\alpha _{6},\alpha _{4}$ & $%
5\alpha_{1}+6\alpha _{2}$ \\ \hline
~$s_{2}^{2}$ & $\alpha _{2},\alpha _{5},\alpha _{4},\alpha _{6}$ & $%
4\alpha_{1}+9\alpha _{2}$ \\ \hline
~$r_{1}s_{2}^{2}$ & $\alpha _{1},\alpha _{3},\alpha _{6},\alpha
_{4},\alpha_{5}$ & $6\alpha _{1}+9\alpha _{2}$ \\ \hline
~$r_{2}s_{1}^{2}$ & $\alpha _{2},\alpha _{5},\alpha _{4},\alpha
_{6},\alpha_{3}$ & $5\alpha _{1}+10\alpha _{2}$ \\ \hline
~$s_{1}^{3}$ & $\Pi ^{+}$ & $6\alpha _{1}+10\alpha _{2}$ \\ \hline
\end{tabular*}
\vspace{12pt}
  \caption{Homology of the maximal flag of $G_{2}$}
\end{table}

By (\ref{forcewseg}) the boundary operator is given as
\begin{itemize}
\item $\partial_6 \mathcal{S}_{s_{1}^{3}}=0$;
\item  $\partial_5 \mathcal{S}_{r_{1}s_{2}^{2}}=-2\mathcal{S}%
_{s_{2}^{2}}$ and $\partial_5 \mathcal{S}_{r_{2}s_{1}^{2}}=-2\mathcal{S}%
_{s_{1}^{2}}$;
\item $\partial_4 \mathcal{S}_{s_{1}^{2}}=\partial_4 \mathcal{S}_{s_{2}^{2}}=0$;
\item $\partial_3\mathcal{S}_{r_{1}s_{2}}=\partial_3 \mathcal{S}_{r_{2}s_{1}}=0$;
\item $\partial_2 \mathcal{S}_{s_{1}}=-2\mathcal{S}_{r_{2}}$ and $\partial_2
\mathcal{S}_{s_{2}}=-2\mathcal{S}_{r_{1}}$; 
\item $\partial_1 \mathcal{S}_{r_{1}}=\partial_1 \mathcal{S}_{r_{2}}=0$.
\end{itemize}
Hence 
\begin{itemize}
\item $H_6(\mathbb{F},\mathbb{Z})= \mathbb{Z}$. 
% generated by $\sbt_{s_1^3}$.

\item $H_5(\mathbb{F},\mathbb{Z})= 0$. %since it has no kernel.

\item $H_4(\mathbb{F},\mathbb{Z}) = \mathbb{Z}_2 \oplus \mathbb{Z}_2$. 
% since the kernel is everything and the image of $\partial_5$ is $2\ze \cdot \sbt_{s_2^2} \oplus 2 \ze \cdot \sbt_{s_1^2}$. 

\item $H_3(\mathbb{F},\mathbb{Z}) = \mathbb{Z} \oplus \mathbb{Z}$. 
% since the kernel is everything and the image of $\partial_4$ is zero. 

\item $H_2(\mathbb{F},\mathbb{Z}) = 0$. % since since it has no kernel.

\item $H_1(\mathbb{F},\mathbb{Z}) = \mathbb{Z}_2 \oplus \mathbb{Z}_2$. 
% since the kernel of $\partial_1$ is everything while the image of $\partial_2$ is $2\ze\cdot \sbt_{r_1} \oplus 2\ze \cdot \sbt_{r_2}$.
\end{itemize}

\section{Partial flag manifolds\label{secpartial}}

In this section we project down the constructions made for the maximal flag
manifolds, via the canonical map $\pi _{\Theta }:\mathbb{F}\rightarrow 
\mathbb{F}_{\Theta }$, to obtain analogous results for the homology of a
partial flag manifold. In $\mathbb{F}_{\Theta }$ the Schubert cells are $\mathcal{S}_{w}^{\Theta }$%
, $w\in \mathcal{W}/\mathcal{W}_{\Theta }$, with $\mathcal{S}_{w}^{\Theta }=%
\mathcal{S}_{w_{1}}^{\Theta }$ if $w\mathcal{W}_{\Theta }={w_{1}}\mathcal{W}%
_{\Theta }$. The next lemma chooses a special representative in $w\mathcal{W}_{\Theta }$
for $\mathcal{S}_{w}^{\Theta}$.

\begin{lema}
\label{minimal_element} There exists an element $w_{1}=wu$ of the coset $w%
\mathcal{W}_{\Theta }$ such that 
\begin{equation*}
\dim \mathcal{S}_{w}^{\Theta }=\dim \mathcal{S}_{w_{1}}.
\end{equation*}%
This element is unique and minimal with respect to the Bruhat-Chevalley
order.
\end{lema}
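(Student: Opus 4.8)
The plan is to take $w_{1}$ to be the minimal-length representative of the right coset $w\mathcal{W}_{\Theta}$ and to establish the three assertions --- the dimension equality, uniqueness, and Bruhat-Chevalley minimality --- by combining standard facts on Coxeter groups with the dimension formula for Bruhat cells. Such a representative $w_{1}$ exists and is unique by \cite{Hum90}; it is characterized by $\ell(w_{1}r_{\alpha})>\ell(w_{1})$ for every $\alpha\in\Theta$, equivalently $w_{1}\alpha\in\Pi^{+}$ for all $\alpha\in\Theta$, and it satisfies $\ell(w_{1}u)=\ell(w_{1})+\ell(u)$ for every $u\in\mathcal{W}_{\Theta}$. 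The first thing I would record is the elementary consequence that $w_{1}\langle\Theta\rangle^{+}\subset\Pi^{+}$, equivalently $w_{1}\langle\Theta\rangle^{-}\subset\Pi^{-}$: any $\beta\in\langle\Theta\rangle^{+}$ is a nonnegative integral combination of the simple roots in $\Theta$, so $w_{1}\beta$ is a nonnegative combination of the positive roots $w_{1}\alpha$, $\alpha\in\Theta$; being itself a root, $w_{1}\beta$ must then be positive.

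For the dimension equality, note that $w_{1}b_{\Theta}=wb_{\Theta}$ since $w_{1}$ and $w$ lie in the same coset, so $\mathcal{S}_{w}^{\Theta}=\mathrm{cl}(N\cdot w_{1}b_{\Theta})$ and $\dim\mathcal{S}_{w}^{\Theta}=\dim(N\cdot w_{1}b_{\Theta})$. Writing the two Bruhat cells as $N$-orbits through $w_{1}b_{0}$ and $w_{1}b_{\Theta}$ gives $\dim(N\cdot w_{1}b_{0})=\dim\mathfrak{n}-\dim(\mathfrak{n}\cap\mathrm{Ad}(w_{1})\mathfrak{p})$ and $\dim(N\cdot w_{1}b_{\Theta})=\dim\mathfrak{n}-\dim(\mathfrak{n}\cap\mathrm{Ad}(w_{1})\mathfrak{p}_{\Theta})$. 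Now $\mathfrak{p}_{\Theta}$ differs from $\mathfrak{p}$ exactly by the root spaces $\mathfrak{g}_{\gamma}$ with $\gamma\in\langle\Theta\rangle^{-}$, and $\mathrm{Ad}(w_{1})$ sends each such $\mathfrak{g}_{\gamma}$ to $\mathfrak{g}_{w_{1}\gamma}$ with $w_{1}\gamma\in\Pi^{-}$ by the observation just recorded; hence none of these root spaces meet $\mathfrak{n}$, so $\mathfrak{n}\cap\mathrm{Ad}(w_{1})\mathfrak{p}_{\Theta}=\mathfrak{n}\cap\mathrm{Ad}(w_{1})\mathfrak{p}$. Therefore $\dim\mathcal{S}_{w_{1}}=\dim(N\cdot w_{1}b_{0})=\dim(N\cdot w_{1}b_{\Theta})=\dim\mathcal{S}_{w}^{\Theta}$; in passing this yields the formula $\dim\mathcal{S}_{w}^{\Theta}=\sum_{\beta\in\Pi_{w_{1}}}m_{\beta}$.

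For uniqueness I would show that $v\mapsto\dim\mathcal{S}_{v}$ on the coset $w\mathcal{W}_{\Theta}$ has a strict minimum at $w_{1}$. Every element of the coset is $w_{1}u$ with $u\in\mathcal{W}_{\Theta}$; when $u\neq 1$, the equality $\ell(w_{1}u)=\ell(w_{1})+\ell(u)$ gives the disjoint decomposition $\Pi_{w_{1}u}=\Pi_{w_{1}}\,\dot{\cup}\,w_{1}\Pi_{u}$, and since $\mathcal{W}$ preserves multiplicities of root spaces one gets $\dim\mathcal{S}_{w_{1}u}=\sum_{\beta\in\Pi_{w_{1}u}}m_{\beta}=\dim\mathcal{S}_{w_{1}}+\sum_{\gamma\in\Pi_{u}}m_{\gamma}>\dim\mathcal{S}_{w_{1}}=\dim\mathcal{S}_{w}^{\Theta}$, because $\Pi_{u}\neq\emptyset$. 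Hence $w_{1}$ is the only element of the coset realizing $\dim\mathcal{S}_{w}^{\Theta}$. Finally, concatenating a reduced expression of $w_{1}$ with one of $u$ yields a reduced expression of $w_{1}u$ in which $w_{1}$ is a subword, so $w_{1}\leq w_{1}u$ for all $u\in\mathcal{W}_{\Theta}$; thus $w_{1}$ is the Bruhat-Chevalley minimum of the coset, which completes the three assertions.

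The step I expect to require the most care is the root-space bookkeeping in the second paragraph: one must keep straight that the relevant cosets are the right cosets $w\mathcal{W}_{\Theta}$, so that $w_{1}\langle\Theta\rangle^{-}\subset\Pi^{-}$ is precisely the condition making the extra root spaces of $\mathfrak{p}_{\Theta}$ drop out of $\mathfrak{n}\cap\mathrm{Ad}(w_{1})\mathfrak{p}_{\Theta}$, and then match this with the characterization of the minimal-length coset representative. The remaining ingredients --- existence and uniqueness of the minimal representative, the length additivity $\ell(w_{1}u)=\ell(w_{1})+\ell(u)$, the inversion-set identity $\Pi_{w_{1}u}=\Pi_{w_{1}}\,\dot{\cup}\,w_{1}\Pi_{u}$, and the subword characterization of the Bruhat-Chevalley order --- are standard and would be quoted from \cite{Hum90}.
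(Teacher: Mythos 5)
Your proof is correct, and it reaches the same element $w_{1}$ (the minimal-length representative of $w\mathcal{W}_{\Theta}$, characterized by $w_{1}\langle\Theta\rangle^{-}\subset\Pi^{-}$), but the route differs from the paper's in two places. For the dimension equality the paper does not count stabilizers: it takes $w_1$ from Warner's unique factorization $w^{-1}=v_s v_u$ ($v_s\in\mathcal{W}_\Theta$, $\Pi^+\cap v_u\Pi^-\cap\langle\Theta\rangle=\emptyset$) and shows that the tangent space $T_{w_1b_0}(N\cdot w_1b_0)$, spanned by $\mathfrak{g}_\beta\cdot w_1b_0$ with $\beta\in\Pi^+\cap w_1\Pi^-$, meets the tangent space of the fiber $\pi_\Theta^{-1}(w_1b_\Theta)$ only in $\{0\}$, so $d\pi_\Theta$ carries the Bruhat cell isomorphically onto $N\cdot w_1b_\Theta$; your version computes $\dim(N\cdot w_1b_0)=\dim\mathfrak{n}-\dim(\mathfrak{n}\cap\mathrm{Ad}(w_1)\mathfrak{p})$ and likewise for $\mathfrak{p}_\Theta$ and observes the two intersections coincide --- the same root-theoretic fact, packaged as a stabilizer count rather than a transversality statement (if you want it airtight, add the one-line remark that $\mathfrak{n}\cap\mathrm{Ad}(w_1)\mathfrak{p}_\Theta$ is $\mathrm{ad}(\mathfrak{a})$-invariant, hence a sum of root spaces). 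For uniqueness and Bruhat--Chevalley minimality the paper argues that all cells $N\cdot vb_0$, $v\in w\mathcal{W}_\Theta$, project onto $N\cdot w_1b_\Theta$, so $w_1$ has minimal length, and then cites Deodhar for uniqueness and minimality of the minimal-length representative; you instead prove these by hand, using $\ell(w_1u)=\ell(w_1)+\ell(u)$, the inversion-set identity $\Pi_{w_1u}=\Pi_{w_1}\,\dot{\cup}\,w_1\Pi_u$ (with Weyl-invariance of multiplicities) to get the strict inequality $\dim\mathcal{S}_{w_1u}>\dim\mathcal{S}_{w_1}$ for $u\neq1$, and the subword criterion for $w_1\leq w_1u$. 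What each buys: your argument is self-contained at the Coxeter level, yields uniqueness of the dimension-realizing element directly, and gives in passing the formula $\dim\mathcal{S}_w^\Theta=\sum_{\beta\in\Pi_{w_1}}m_\beta$; the paper's tangent-space formulation is what it actually reuses later, since knowing that $\pi_\Theta$ restricts to a diffeomorphism on the Bruhat cell $N\cdot w_1b_0$ is the ingredient that lets the attaching maps, and hence the coefficients $c^{\Theta}([w],[w'])$, be identified with those of the maximal flag manifold.
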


\begin{proof}
By Proposition 1.1.2.13 of \cite{War72} any $v\in \mathcal{W}$ can be written
uniquely as 
\begin{equation*}
v=v_{s}v_{u}
\end{equation*}%
with $v_{s}\in \mathcal{W}_{\Theta }$ and $v_{u}$ satisfying $\Pi ^{+}\cap
v_{u}\Pi ^{-}\cap \langle \Theta \rangle =\emptyset $, that is, no positive
root in $\langle \Theta \rangle $ is mapped to a negative root by $%
v_{u}^{-1} $. Note that the condition for $v_{u}$ is equivalent to $\Pi
^{-}\cap v_{u}\Pi ^{+}\cap \langle \Theta \rangle =\emptyset $, since a root 
$\alpha >0$ belongs to $\Pi ^{+}\cap v_{u}\Pi ^{-}\cap \langle \Theta
\rangle $ if and only if $-\alpha \in \Pi ^{-}\cap v_{u}\Pi ^{+}\cap \langle
\Theta \rangle $.

Let $w^{-1}=v_{s}v_{u}$ be the decomposition for $w^{-1}$ so that $%
w=v_{u}^{-1}v_{s}^{-1}$. Then $w_{1}=v_{u}^{-1}\in w\mathcal{W}_{\Theta }$
is the required element.

In fact $\Pi ^{-}\cap w_{1}^{-1}\Pi ^{+}\cap \langle \Theta \rangle
=\emptyset $, and hence $\Pi ^{+}\cap w_{1}\Pi ^{-}\cap w_{1}\langle \Theta
\rangle =\emptyset $.

Now the tangent space $T_{w_{1}b_{0}}(N\cdot w_{1}b_{0})$ is 
\begin{equation*}
\langle \mathfrak{g}_{\beta }\cdot w_{1}b_{0}\,:\beta \in \Pi ^{+}\cap
w_{1}\Pi ^{-}\rangle
\end{equation*}%
(cf. Lemma \ref{lemproperties}). On the other hand the tangent space to the
fiber $\pi _{\Theta }^{-1}(w_{1}b_{\Theta })$ is the translation under $%
w_{1} $ of the tangent space to fiber at origin. Hence, $T_{w_{1}b_{\Theta
}}\pi _{\Theta }^{-1}(w_{1}b_{\Theta })$ is spanned by $w_{1}(\mathfrak{g}%
_{\alpha }\cdot b_{0})$, with $\alpha \in \langle \Theta \rangle $ and $%
\alpha <0$. By the translation formula, we have $w_{1}(\mathfrak{g}_{\alpha
}\cdot b_{0})=\mathfrak{g}_{w_{1}\alpha }\cdot w_{1}b_{0}$. Therefore, by
writting $\gamma =w_{1}\alpha $ we conclude that $T_{w_{1}b_{\Theta }}\pi
_{\Theta }^{-1}(w_{1}b_{\Theta })$ is spanned by $\mathfrak{g}_{\gamma
}\cdot w_{1}b_{0}$ with $w_{1}^{-1}\gamma \in \langle \Theta \rangle $ and $%
w^{-1}\gamma <0$, that is, with $w_{1}^{-1}\gamma \in \Pi ^{-}\cap \langle
\Theta \rangle $. So that 
\begin{equation*}
T_{w_{1}b_{0}}(\pi _{\Theta }^{-1}\left( w_{1}b_{\Theta }\right) )=\langle 
\mathfrak{g}_{\gamma }\cdot w_{1}b_{0}\,:\gamma \in w_{1}\Pi ^{-}\cap
w_{1}\langle \Theta \rangle \rangle .
\end{equation*}%
Since $\Pi ^{+}\cap w_{1}\Pi ^{-}\cap w_{1}\langle \Theta \rangle =\emptyset 
$, it follows that none of roots $\gamma $ spanning $T_{w_{1}b_{0}}(\pi
_{\Theta }^{-1}\left( w_{1}b_{\Theta }\right) )$ can be positive.

Therefore, $T_{w_{1}b_{0}}(N\cdot w_{1}b_{0})\cap T_{w_{1}b_{0}}(\pi
_{\Theta }^{-1}\left( w_{1}b_{\Theta }\right) )=\{0\}$. This implies that
the differential of $\pi _{\Theta }$ maps $T_{w_{1}b_{0}}(N\cdot w_{1}b_{0})$
isomorphically to the tangent space of $\pi _{\Theta }\left( N\cdot
w_{1}b_{0}\right) =N\cdot w_{1}b_{\Theta }$. Hence the two Bruhat cells have
the same dimension.

Finally, $N\cdot w_{1}b_{\Theta }$ has the minimum possible dimension among
the cells $N\cdot wb_{0}$, $w\in w_{1}\mathcal{W}$, because all of them
project onto $N\cdot w_{1}b_{\Theta }$. Hence $w_{1}$ has minimal length in $%
w_{1}\mathcal{W}$ which is known to be unique and minimal with respect to
the Bruhat-Chevalley order as well (see Deodhar \cite{Deo77}).
\end{proof}

We will denote by $\mathcal{W}_{\Theta }^{\min }$ the set of minimal
elements of the cosets in $\mathcal{W}/\mathcal{W}_{\Theta }$.

Now we contruct a cellular decomposition for $\mathbb{F}_{\Theta }$ with the
aid of the minimal elements $w\in \mathcal{W}_{\Theta }^{\min }$ in their
cosets $w\mathcal{W}_{\Theta }$, satisfying $\dim \mathcal{S}_{w}^{\Theta
}=\dim \mathcal{S}_{w}$. Using a reduced decomposition of such minimal
element $w$ we have new functions $\Phi _{w}^{\Theta }$ defined in the same
way, but replacing the origin $b_{0}$ of $\mathbb{F}$ by the origin $%
b_{\Theta }$ of $\mathbb{F}_{\Theta }$, that is, 
\begin{equation*}
\Phi _{w}^{\Theta }(t_{1},\ldots ,t_{n})=\psi _{1}(t_{1})\cdots \psi
_{n}(t_{n})\cdot b_{\Theta }.
\end{equation*}%
By equivariance, $\Phi _{w}^{\Theta }=\pi _{\Theta }\circ \Phi _{w}$. This
function satisfies the required properties to be a characteristic map for
the Schubert cells $\mathcal{S}_{w}^{\Theta }$.

\begin{prop}
Take $w\in \mathcal{W}_{\Theta }^{\min }$ so that $\dim \mathcal{S}%
_{w}^{\Theta }=\dim \mathcal{S}_{w}$ and let $w=r_{1}\cdots r_{n}$ be a
reduced expression as a product of simple reflections. Let $\Phi
_{w}^{\Theta }:B_{w}\rightarrow \mathbb{F}_{\Theta }$ be defined by $\Phi
_{w}^{\Theta }=\pi _{\Theta }\circ \Phi _{w}$ and take $\mathbf{t}%
=(t_{1},\ldots ,t_{n})\in B_{w}$. Then $\Phi _{w}^{\Theta }$ is a
characteristic map for $\mathcal{S}_{w}^{\Theta }$, that is, satisfies the
following properties:

\begin{enumerate}
\item $\Phi _{w}^{\Theta }(B_{w})=\mathcal{S}_{w}^{\Theta }$.

\item $\Phi _{w}^{\Theta }(\mathbf{t})\in \mathcal{S}_{w}^{\Theta }\setminus
N\cdot wb_{\Theta }$ if and only if $\mathbf{t}\in \partial B_{w}=S^{d-1}$.

\item $\Phi |_{w}^{\Theta }{B_{w}^{\circ }}:B_{w}^{\circ }\rightarrow N\cdot
wb_{\Theta }$ is a homeomorphism, where $B_{w}^{\circ }$ is the interior of $%
B_{w}$.
\end{enumerate}
\end{prop}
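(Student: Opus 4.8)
The plan is to obtain all three properties by pushing the maximal-flag statement (Proposition \ref{characteristic_map}) forward along the fibration $\pi_\Theta\colon\mathbb{F}\to\mathbb{F}_\Theta$, using three facts: $\Phi_w^\Theta=\pi_\Theta\circ\Phi_w$ together with the equivariance of $\pi_\Theta$; that $\pi_\Theta$ is a closed map (it is continuous on the compact space $\mathbb{F}$), so $\pi_\Theta(\mathrm{cl}\,A)=\mathrm{cl}\,\pi_\Theta(A)$ for every $A\subset\mathbb{F}$; and that $\pi_\Theta$ respects the Bruhat decompositions, so that $\pi_\Theta^{-1}(N\cdot wb_\Theta)=\coprod_{u\mathcal{W}_\Theta=w\mathcal{W}_\Theta}N\cdot ub_0$. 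The crucial input that makes the hypothesis $w\in\mathcal{W}_{\Theta}^{\min}$ matter is the reduction: $\pi_\Theta$ restricts to a diffeomorphism of $N\cdot wb_0$ onto $N\cdot wb_\Theta$.

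I would prove this reduction first. The restriction $\pi_\Theta|_{N\cdot wb_0}\colon N\cdot wb_0\to N\cdot wb_\Theta$ is surjective and $N$-equivariant, and source and target are single $N$-orbits on which $N$ acts transitively. The proof of Lemma \ref{minimal_element} shows, precisely because $w$ is the minimal element of its coset, that $d(\pi_\Theta)_{wb_0}$ carries $T_{wb_0}(N\cdot wb_0)$ isomorphically onto $T_{wb_\Theta}(N\cdot wb_\Theta)$ (it identifies the complement, transversal to the tangent space of the fiber). By $N$-equivariance the differential is then an isomorphism at every point, so $\pi_\Theta|_{N\cdot wb_0}$ is a surjective local diffeomorphism; since $N\cdot wb_0$ is diffeomorphic to a Euclidean space and $N\cdot wb_\Theta$ is simply connected, it is a covering map with connected, simply connected total space, hence a diffeomorphism.

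Granting the reduction, Property (3) is immediate: $\Phi_w^\Theta|_{B_w^{\circ}}=(\pi_\Theta|_{N\cdot wb_0})\circ(\Phi_w|_{B_w^{\circ}})$ is a composite of homeomorphisms, the second factor being a homeomorphism onto $N\cdot wb_0$ by Proposition \ref{characteristic_map}(3). Property (1) follows from closedness of $\pi_\Theta$: $\Phi_w^\Theta(B_w)=\pi_\Theta(\Phi_w(B_w))=\pi_\Theta(\mathcal{S}_w)=\pi_\Theta(\mathrm{cl}(N\cdot wb_0))=\mathrm{cl}(N\cdot wb_\Theta)=\mathcal{S}_w^\Theta$, using $\Phi_w(B_w)=\mathcal{S}_w$ and the equivariance identity $\pi_\Theta(N\cdot wb_0)=N\cdot wb_\Theta$. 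For Property (2), if $\mathbf{t}\in B_w^{\circ}$ then $\Phi_w^\Theta(\mathbf{t})\in N\cdot wb_\Theta$ by (3); if $\mathbf{t}\in\partial B_w$, then by Proposition \ref{characteristic_map}(2) we have $\Phi_w(\mathbf{t})\in\mathcal{S}_w\setminus N\cdot wb_0$, so $\Phi_w(\mathbf{t})\in N\cdot u'b_0$ for a unique $u'<w$. Were $\Phi_w^\Theta(\mathbf{t})\in N\cdot wb_\Theta$, the description of $\pi_\Theta^{-1}(N\cdot wb_\Theta)$ recorded above would force $u'\mathcal{W}_\Theta=w\mathcal{W}_\Theta$, i.e.\ $u'\in w\mathcal{W}_\Theta$ with $u'<w$, contradicting the minimality of $w$ in its coset (Lemma \ref{minimal_element}). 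Hence $\Phi_w^\Theta(\mathbf{t})\in\mathcal{S}_w^\Theta\setminus N\cdot wb_\Theta$, completing the equivalence.

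The main obstacle is the reduction step, i.e.\ upgrading ``$\pi_\Theta$ has injective differential along the Bruhat cell $N\cdot wb_0$'' --- which is exactly where minimality of $w$ is genuinely used, through Lemma \ref{minimal_element} --- to ``$\pi_\Theta$ is a diffeomorphism of that cell onto $N\cdot wb_\Theta$''; this requires a small amount of care with the topology of the orbits (a surjective local diffeomorphism onto a simply connected base with Euclidean total space is a diffeomorphism). Everything else is a formal consequence of equivariance and closedness of $\pi_\Theta$ together with the properties of $\Phi_w$ already established for the maximal flag manifold.
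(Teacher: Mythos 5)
Your route is the same as the paper's: write $\Phi_w^{\Theta}=\pi_\Theta\circ\Phi_w$, push Proposition \ref{characteristic_map} forward using equivariance and closedness of $\pi_\Theta$, and use the minimality of $w$ in its coset (Lemma \ref{minimal_element}) both to rule out $u<w$ with $u\in w\mathcal{W}_\Theta$ and to invert $\pi_\Theta$ on the top cell. Items (1) and (2) are argued correctly, and in fact more explicitly than in the paper: the paper's one-line claim $\pi_\Theta(\mathcal{S}_w\setminus N\cdot wb_0)=\mathcal{S}_w^{\Theta}\setminus N\cdot wb_\Theta$ hides exactly the coset/minimality argument you spell out via $\pi_\Theta^{-1}(N\cdot wb_\Theta)=\coprod_{u\mathcal{W}_\Theta=w\mathcal{W}_\Theta}N\cdot ub_0$.

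The one step that does not stand as written is the upgrade in your ``reduction''. From ``$\pi_\Theta|_{N\cdot wb_0}$ is a surjective local diffeomorphism onto $N\cdot wb_\Theta$'' you conclude it is a covering map merely because the source is Euclidean and the target simply connected; that general principle is false. Without properness a surjective local diffeomorphism need not be a covering: there are non-injective surjective local diffeomorphisms of the plane onto the plane (an immersed strip can sweep out the whole plane while overlapping itself), so Euclidean source plus simply connected target is not enough. The repair is already contained in the equivariance you invoke: the restriction is the canonical $N$-map $N/N_1\to N/N_2$ between the two orbits, where $N_1\subset N_2$ are the isotropy subgroups of $wb_0$ and $wb_\Theta$ in $N$; this is a fiber bundle with fiber $N_2/N_1$, and since the differential at $wb_0$ is injective (equivalently, the two cells have equal dimension, which is what Lemma \ref{minimal_element} gives), the fiber is discrete. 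Hence the map is a covering of the simply connected cell $N\cdot wb_\Theta$ with connected total space, and therefore a homeomorphism. (Alternatively, one can check injectivity directly: $n_1wb_\Theta=n_2wb_\Theta$ gives $n_2^{-1}n_1\in N\cap wP_\Theta w^{-1}$, and the minimality condition $\Pi^+\cap w\Pi^-\cap w\langle\Theta\rangle=\emptyset$ forces the Lie algebra of this unipotent intersection to consist of $\mathfrak{g}_\beta$ with $\beta>0$ and $w^{-1}\beta>0$, so it fixes $wb_0$ and $n_1wb_0=n_2wb_0$.) With either repair your argument is complete and coincides with, while usefully amplifying, the paper's proof.
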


\begin{proof}
This is the Proposition \ref{characteristic_map} in this generalized flag
context. The first item follows by equivariance of $\pi _{\Theta }$. The
second assertion is true because $\pi _{\Theta }(\mathcal{S}_{w}\setminus
N\cdot wb_{0})=\mathcal{S}_{w}^{\Theta }\setminus N\cdot wb_{\Theta }$ and $%
\Phi _{w}^{\Theta }=\pi _{\Theta }\circ \Phi _{w}$. The last item is a
consequence of the equality of the dimensionsx of Bruhat cells $N\cdot
wb_{0} $ and $N\cdot wb_{\Theta }$.
\end{proof}

Now we can find out the boundary maps $\partial ^{\Theta }$ with
coefficients $c^{\Theta }([w],[w^{\prime }])$, where $[w]$ denotes the the
coset $w\mathcal{W}_{\Theta }$. We have $c^{\Theta }([w],[w^{\prime }])=0$,
unless

\begin{enumerate}
\item $\dim \mathcal{S}_w^\Theta = \dim \mathcal{S}_{w^{\prime }}^\Theta + 1$
and

\item $\mathcal{S}_{w^{\prime }}^{\Theta }\subset \mathcal{S}_{w}^{\Theta }$.
\end{enumerate}

Here the inclusions among the Schubert cells are also given by the
Bruhat-Chevalley order (cf. Proposition \ref{BC-order}), namely $\mathcal{S}%
_{w^{\prime }}^{\Theta }\subset \mathcal{S}_{w}^{\Theta }$ if and only if
there is $u\in w^{\prime }\mathcal{W}_{\Theta }$ such that $u<w$. (This
follows immediately from the projections $\pi _{\Theta }\mathcal{S}_{w}=%
\mathcal{S}_{w}^{\Theta }$.) Actually, we have the following complement to
Lemma \ref{minimal_element}.

\begin{lema}
\label{lemumenos} Let $w\in \mathcal{W}_{\Theta }^{\min }$ minimal in its
coset and suppose that there exists $u\in w^{\prime }\mathcal{W}_{\Theta }$
with $u<w$ and $\dim \mathcal{S}_{w}^{\Theta }=\dim \mathcal{S}_{w^{\prime
}}^{\Theta }+1$. Then $u$ is minimal in $w^{\prime }\mathcal{W}_{\Theta }$.
\end{lema}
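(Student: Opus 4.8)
The plan is to show directly that $u$ coincides with the minimal element of its coset. Write $u_{1}$ for that minimal element of $w^{\prime }\mathcal{W}_{\Theta }=u\mathcal{W}_{\Theta }$, which exists and is unique by Lemma \ref{minimal_element}; as is standard for the minimal-length representatives of $\mathcal{W}/\mathcal{W}_{\Theta }$ (see Deodhar \cite{Deo77}), $u_{1}$ actually lies below every element of its coset in the Bruhat--Chevalley order. In particular $u_{1}\leq u$, and since by hypothesis $u<w$, transitivity gives $u_{1}<w$; hence $\ell (u_{1})\leq \ell (u)\leq \ell (w)-1$. The whole argument is then reduced to proving $\ell (u_{1})=\ell (u)$, for this, together with $u_{1}\leq u$ and the fact that the Bruhat--Chevalley order is graded by length, forces $u_{1}=u$, i.e.\ $u$ is minimal.

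Next I would rephrase the dimension hypothesis as a statement about $u_{1}$ alone. Since $w\in \mathcal{W}_{\Theta }^{\min }$ we have $\dim \mathcal{S}_{w}^{\Theta }=\dim \mathcal{S}_{w}$, and applying Lemma \ref{minimal_element} to the coset $w^{\prime }\mathcal{W}_{\Theta }$ yields $\dim \mathcal{S}_{w^{\prime }}^{\Theta }=\dim \mathcal{S}_{u_{1}}$. Thus the assumption $\dim \mathcal{S}_{w}^{\Theta }=\dim \mathcal{S}_{w^{\prime }}^{\Theta }+1$ becomes simply $\dim \mathcal{S}_{w}-\dim \mathcal{S}_{u_{1}}=1$.

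Finally I would compute that difference. Fix a reduced expression $w=r_{\alpha _{1}}\cdots r_{\alpha _{n}}$. Since $u_{1}<w$, the subword characterization of the Bruhat--Chevalley order recalled in the Notation section lets us write $u_{1}=r_{\alpha _{i_{1}}}\cdots r_{\alpha _{i_{j}}}$ as a reduced subword with $j=\ell (u_{1})$; as recalled in the proof of Proposition \ref{propappendix}, $\dim \mathcal{S}_{w}-\dim \mathcal{S}_{u_{1}}$ is then the sum of the multiplicities $m_{\alpha _{k}}$ taken over the $n-j=\ell (w)-\ell (u_{1})$ indices $k$ omitted from the subword. This is a sum of $\ell (w)-\ell (u_{1})\geq 1$ strictly positive integers which equals $1$, so necessarily $\ell (w)-\ell (u_{1})=1$, i.e.\ $\ell (u_{1})=\ell (w)-1$. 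Combined with $\ell (u_{1})\leq \ell (u)\leq \ell (w)-1$ this gives $\ell (u)=\ell (u_{1})$, whence $u=u_{1}$ as noted above. I do not expect a genuine obstacle here: the only delicate points are the two standard inputs — that the minimal-length element of a coset dominates no other element of the coset while being dominated by all of them, and the subword property — both of which are supplied by the references already cited.
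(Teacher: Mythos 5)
Your argument is correct, but it takes a different route from the paper's. The paper never introduces the minimal representative $u_{1}$ of $w^{\prime }\mathcal{W}_{\Theta }$: it compares $\dim \mathcal{S}_{u}$ and $\dim \mathcal{S}_{u}^{\Theta }$ directly through the sandwich
$\dim \mathcal{S}_{w}=\dim \mathcal{S}_{w}^{\Theta }=\dim \mathcal{S}_{u}^{\Theta }+1\leq \dim \mathcal{S}_{u}+1\leq \dim \mathcal{S}_{w}$,
where the first inequality is just that the projection $\pi _{\Theta }$ cannot increase the dimension of a Bruhat cell and the second is $u<w\Rightarrow \dim \mathcal{S}_{u}\leq \dim \mathcal{S}_{w}-1$; equality throughout gives $\dim \mathcal{S}_{u}=\dim \mathcal{S}_{u}^{\Theta }$, which characterizes minimality (via $\Pi _{u}\cap \langle \Theta \rangle =\emptyset $, as in Lemma \ref{minimal_element}). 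You instead identify $u$ with $u_{1}$ by a length argument: you invoke the fact (stated in the paper's proof of Lemma \ref{minimal_element}, via Deodhar) that $u_{1}$ is dominated by every element of its coset, rewrite the hypothesis as $\dim \mathcal{S}_{w}-\dim \mathcal{S}_{u_{1}}=1$, and use the reduced-subword property together with the multiplicity-sum dimension count from Proposition \ref{propappendix} to force $\ell (u_{1})=\ell (w)-1$, whence $\ell (u)=\ell (u_{1})$ and $u=u_{1}$. Both proofs are short; the paper's buys economy (only two elementary dimension inequalities, no Coxeter combinatorics beyond what is already in Lemma \ref{minimal_element}), while yours makes explicit the combinatorial picture ($u$ must be the Bruhat-minimal coset representative, sitting one letter below $w$), at the cost of importing the subword characterization and the Weyl-invariance of root multiplicities. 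All the auxiliary facts you cite are available in or consistent with the paper, so there is no gap.
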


\begin{proof}
We have $\dim \mathcal{S}_{w}=\dim \mathcal{S}_{w}^{\Theta }=\dim \mathcal{S}%
_{u}^{\Theta }+1\leq \dim \mathcal{S}_{u}+1$. But if $u<w$ then $\dim 
\mathcal{S}_{u}\leq \dim \mathcal{S}_{w}-1$, so that $\dim \mathcal{S}%
_{w}\leq \dim \mathcal{S}_{u}+1\leq \dim \mathcal{S}_{w}$, implying that 
\begin{equation*}
\dim \mathcal{S}_{u}=\dim \mathcal{S}_{w}^{\Theta }-1=\dim \mathcal{S}%
_{u}^{\Theta }.
\end{equation*}%
Hence $u$ is minimal in its coset.
\end{proof}

\vspace{12pt}%

\noindent%

\textbf{Remark:} The assumption $\dim \mathcal{S}_{w}^{\Theta }=\dim 
\mathcal{S}_{w^{\prime }}^{\Theta }+1$ in Lemma \ref{lemumenos} is
essential. Without it there may be $u\in w^{\prime }\mathcal{W}_{\Theta }$
which is not minimal although $u<w$ and $w$ is minimal. Geometrically this
happens when $\dim \mathcal{S}_{w}=\dim \mathcal{S}_{u}^{\Theta }+1$ but $%
\dim \mathcal{S}_{w}^{\Theta }>\dim \mathcal{S}_{w^{\prime }}^{\Theta }+1$,
which may give $c\left( w,u\right) \neq 0$ and $c^{\Theta }\left(
[w],[u]\right) =0$. The following example illustrates this situation.

\vspace{12pt}

\noindent
\textbf{Example:} In the Weyl group $S_{4}$ of $A_{3}$ take $w=(12)(23)(34)$
and $\Theta =\{\alpha _{23}\}$. Then $w$ is minimal in the coset $w\mathcal{W%
}_{\{\alpha _{23}\}}$. The roots $\alpha _{12}$, $(12)\alpha _{23}=\alpha
_{12}+\alpha _{23}$ and $(12)(23)\alpha _{34}=\alpha _{12}+\alpha
_{23}+\alpha _{34}$ are positive roots mapped to negative roots by $w^{-1}$
and none of these roots lie in $\langle \Theta \rangle $. However, $%
w^{\prime }=(12)(23)=(123)$ is not minimal in its coset since $(12)<(12)(23)$
and and both belong to the same coset. Now $\dim \mathcal{S}_{(12)(23)}=\dim 
\mathcal{S}_{w}-1$ but $\mathcal{S}_{(12)(23)}^{\Theta }=\dim \mathcal{S}%
_{(12)}^{\Theta }=\dim \mathcal{S}_{w}^{\Theta }-2$.

Now if $w$ and $w^{\prime }$ belong to $\mathcal{W}_{\Theta }^{\min }$ and $%
\dim \mathcal{S}_{w}^{\Theta }=\dim \mathcal{S}_{w^{\prime }}^{\Theta }+1$
then there are the homeomorphisms $\pi _{\Theta }:N\cdot wb_{0}\rightarrow
N\cdot wb_{\Theta }$ and $\pi _{\Theta }:N\cdot w^{\prime }b_{0}\rightarrow
N\cdot w^{\prime }b_{\Theta }$. This implies that the attaching map between $%
\mathcal{S}_{w}^{\Theta }$ and $\mathcal{S}_{w^{\prime }}^{\Theta }$ defined
by $\Phi _{w}^{\Theta }=\pi _{\Theta }\circ \Phi _{w}$ and $\Phi _{w^{\prime
}}^{\Theta }\circ \Phi _{w^{\prime }}$ is the same as the attaching map
between $\mathcal{S}_{w}$ and $\mathcal{S}_{w^{\prime }}$. Hence the
coefficients for $\partial ^{\Theta }$ and $\partial $ are the same, that
is, 
\begin{equation*}
c^{\Theta }([w],[w^{\prime }])=c(w,w^{\prime }).
\end{equation*}

Hence the computation of $c^{\Theta }([w],[w^{\prime }])$ reduces to a
computation on $\mathbb{F}$.

\begin{teo}
The cellular homology of $\mathbb{F}_{\Theta }$ is isomorphic to the
homology of $\partial _{\min }^{\Theta }$ which is the boundary map of the
free module $\mathcal{A}_{\Theta }^{\min }$ generated by $\mathcal{S}_{w}$, $%
w\in \mathcal{W}_{\Theta }^{\min }$, obtained by restricting $\partial $ and
projecting onto $\mathcal{A}_{\Theta }^{\min }$.
\end{teo}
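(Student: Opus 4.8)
The plan is to show that the chain complex $(\mathcal{C},\partial)$ of $\mathbb{F}_\Theta$ built from all Schubert cells $\mathcal{S}_w^\Theta$, $w\in\mathcal{W}/\mathcal{W}_\Theta$, collapses onto the subcomplex $(\mathcal{A}_\Theta^{\min},\partial_{\min}^\Theta)$ generated by the minimal representatives $w\in\mathcal{W}_\Theta^{\min}$, without changing the homology. The key point is that each coset $w\mathcal{W}_\Theta$ contributes exactly one Schubert cell $\mathcal{S}_w^\Theta$, and by Lemma \ref{minimal_element} there is a unique minimal $w\in\mathcal{W}_\Theta^{\min}$ in that coset with $\dim\mathcal{S}_w^\Theta=\dim\mathcal{S}_w$; so the assignment $w\mathcal{W}_\Theta\mapsto w\in\mathcal{W}_\Theta^{\min}$ is a bijection between the indexing set $\mathcal{W}/\mathcal{W}_\Theta$ of the Schubert cells of $\mathbb{F}_\Theta$ and the set $\mathcal{W}_\Theta^{\min}\subset\mathcal{W}$. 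First I would use the equivariance relation $\Phi_w^\Theta=\pi_\Theta\circ\Phi_w$ (established in the preceding Proposition) together with the fact that for $w\in\mathcal{W}_\Theta^{\min}$ the projection $\pi_\Theta$ restricts to a homeomorphism $N\cdot wb_0\to N\cdot wb_\Theta$, so that $\Phi_w^\Theta$ is a genuine characteristic map making $\{\mathcal{S}_w^\Theta:w\in\mathcal{W}_\Theta^{\min}\}$ a CW structure on $\mathbb{F}_\Theta$.

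Next I would identify the boundary operator. For $w,w'\in\mathcal{W}_\Theta^{\min}$ with $\dim\mathcal{S}_w^\Theta=\dim\mathcal{S}_{w'}^\Theta+1$, the discussion immediately preceding the theorem already shows $c^\Theta([w],[w'])=c(w,w')$: the attaching map between $\mathcal{S}_w^\Theta$ and $\mathcal{S}_{w'}^\Theta$ coincides, via the homeomorphisms $\pi_\Theta:N\cdot wb_0\to N\cdot wb_\Theta$ and $\pi_\Theta:N\cdot w'b_0\to N\cdot w'b_\Theta$, with the attaching map between $\mathcal{S}_w$ and $\mathcal{S}_{w'}$ in $\mathbb{F}$. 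So the complex $(\mathcal{A}_\Theta^{\min},\partial_{\min}^\Theta)$ is literally the subquotient of $(\mathcal{C}^{\mathbb{F}},\partial^{\mathbb{F}})$ obtained by taking the free submodule spanned by the $\mathcal{S}_w$, $w\in\mathcal{W}_\Theta^{\min}$, applying $\partial^{\mathbb{F}}$, and then projecting away the generators $\mathcal{S}_u$ with $u\notin\mathcal{W}_\Theta^{\min}$. One must check that this is well defined as a differential, i.e. that $\partial_{\min}^\Theta\circ\partial_{\min}^\Theta=0$; but this is automatic since it is the cellular boundary of the CW structure on $\mathbb{F}_\Theta$ just constructed, so $H_*(\mathcal{A}_\Theta^{\min},\partial_{\min}^\Theta)$ is by definition the cellular homology of $\mathbb{F}_\Theta$, which equals its singular homology.

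The one technical point that needs care — and which I expect to be the main obstacle — is Lemma \ref{lemumenos} and the Remark following it: when computing $\partial^\Theta\mathcal{S}_w^\Theta$ for a minimal $w$, one must be sure that every term $\mathcal{S}_{w'}^\Theta$ with nonzero coefficient has $w'$ itself minimal and satisfies $\dim\mathcal{S}_w^\Theta=\dim\mathcal{S}_{w'}^\Theta+1$. This is exactly the content of Lemma \ref{lemumenos}: if $u\in w'\mathcal{W}_\Theta$ with $u<w$ and $\dim\mathcal{S}_w^\Theta=\dim\mathcal{S}_{w'}^\Theta+1$, then $u$ is minimal in its coset, so $u=w'$; and conversely, if $u<w$ but $u$ is not minimal (so $\dim\mathcal{S}_u>\dim\mathcal{S}_u^\Theta$), then although $c(w,u)$ may be nonzero in $\mathbb{F}$, the cell $\mathcal{S}_u$ is not one of the generators of $\mathcal{A}_\Theta^{\min}$ and the projection simply discards that term; the Remark's example with $A_3$ shows this genuinely occurs. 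Thus the restriction-then-project recipe does produce the correct cellular boundary on $\mathbb{F}_\Theta$, and the theorem follows: the cellular homology of $\mathbb{F}_\Theta$ — hence its singular homology — is the homology of $(\mathcal{A}_\Theta^{\min},\partial_{\min}^\Theta)$.
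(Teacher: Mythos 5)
Your proposal is correct and follows essentially the same route as the paper: the CW structure on $\mathbb{F}_{\Theta}$ given by the minimal representatives via $\Phi_{w}^{\Theta}=\pi_{\Theta}\circ \Phi_{w}$, the identification $c^{\Theta}([w],[w'])=c(w,w')$ through the homeomorphisms $\pi_{\Theta}\colon N\cdot wb_{0}\rightarrow N\cdot wb_{\Theta}$, and Lemma \ref{lemumenos} to guarantee that the codimension-one faces of a minimal cell are again minimal, while contributions from non-minimal $u<w$ are discarded because the dimension gap in $\mathbb{F}_{\Theta}$ exceeds one. Nothing essential is missing.
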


\begin{coro}
$c^{\Theta }([w],[w^{\prime }])=0$ or $\pm 2$. In particular taking
coefficients in $\mathbb{Z}_{2}$, $\partial ^{\Theta }=0$ and the $\mathbb{Z}%
_{2}$-homology of $\mathbb{F}_{\Theta }$ is freely generated by $\mathcal{S}%
_{[w]}^{\Theta }$, $[w]\in \mathcal{W}/\mathcal{W}_{\Theta }$.
\end{coro}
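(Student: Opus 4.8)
The plan is to deduce the corollary directly from the preceding Theorem, which identifies the cellular homology of $\mathbb{F}_{\Theta}$ with the homology of $\partial_{\min}^{\Theta}$, together with the computation of $\partial$ already carried out on the maximal flag manifold in Section~\ref{sechommax}. First I would recall the identity established in the discussion just before that Theorem: whenever $w,w^{\prime}\in\mathcal{W}_{\Theta}^{\min}$ and $\dim\mathcal{S}_{w}^{\Theta}=\dim\mathcal{S}_{w^{\prime}}^{\Theta}+1$, the projection $\pi_{\Theta}$ restricts to homeomorphisms $N\cdot wb_{0}\to N\cdot wb_{\Theta}$ and $N\cdot w^{\prime}b_{0}\to N\cdot w^{\prime}b_{\Theta}$, so that, since $\Phi_{w}^{\Theta}=\pi_{\Theta}\circ\Phi_{w}$, the attaching map relating $\mathcal{S}_{w}^{\Theta}$ to $\mathcal{S}_{w^{\prime}}^{\Theta}$ coincides with the one relating $\mathcal{S}_{w}$ to $\mathcal{S}_{w^{\prime}}$ in $\mathbb{F}$. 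Consequently $c^{\Theta}([w],[w^{\prime}])=c(w,w^{\prime})$.

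Next I would invoke the Theorem of Section~\ref{sechommax}, which gives $c(w,w^{\prime})=\deg(f_{i}^{0})+\deg(f_{i}^{\pi})\in\{0,\pm 2\}$, and combine it with the previous identity to conclude $c^{\Theta}([w],[w^{\prime}])\in\{0,\pm 2\}$ for all pairs of cosets for which the coefficient can be nonzero; for all other pairs $c^{\Theta}=0$ by definition of the cellular boundary. The only bookkeeping point here is to make sure no relevant pair is omitted: by Lemma~\ref{minimal_element} every Schubert cell $\mathcal{S}_{[w]}^{\Theta}$ is represented by a unique minimal $w\in\mathcal{W}_{\Theta}^{\min}$ with $\dim\mathcal{S}_{w}^{\Theta}=\dim\mathcal{S}_{w}$, and by Lemma~\ref{lemumenos} any coset $[w^{\prime}]$ with $\mathcal{S}_{w^{\prime}}^{\Theta}\subset\mathcal{S}_{w}^{\Theta}$ and dimension one less is likewise realized by a minimal representative. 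Thus the equality $c^{\Theta}([w],[w^{\prime}])=c(w,w^{\prime})$ applies exactly on the set of pairs where $c^{\Theta}$ may fail to vanish, and the first assertion follows.

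Finally, for the $\mathbb{Z}_{2}$ statement I would simply reduce mod $2$: since every coefficient $c^{\Theta}([w],[w^{\prime}])$ is even, the induced boundary operator $\partial^{\Theta}\otimes\mathbb{Z}_{2}$ is identically zero. The cellular chain complex of $\mathbb{F}_{\Theta}$ has exactly one generator $\mathcal{S}_{[w]}^{\Theta}$ for each $[w]\in\mathcal{W}/\mathcal{W}_{\Theta}$ (the Schubert cells give a CW structure, and the $A$-fixed points, hence the cells, are in bijection with $\mathcal{W}/\mathcal{W}_{\Theta}$), so with $\mathbb{Z}_{2}$ coefficients the homology equals the chain module itself, i.e. the free $\mathbb{Z}_{2}$-vector space on the $\mathcal{S}_{[w]}^{\Theta}$, $[w]\in\mathcal{W}/\mathcal{W}_{\Theta}$.

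I do not anticipate a genuine obstacle, since all the geometric content has been established in the earlier sections; the one place demanding a little care is the second paragraph, namely verifying that the reduction $c^{\Theta}([w],[w^{\prime}])=c(w,w^{\prime})$ is valid on precisely the pairs that can contribute to $\partial^{\Theta}$, which is exactly what Lemma~\ref{lemumenos} (with the \textbf{Remark} following it) was set up to control.
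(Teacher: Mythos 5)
Your proposal is correct and follows essentially the same route the paper intends: the reduction $c^{\Theta }([w],[w^{\prime }])=c(w,w^{\prime })$ for minimal representatives (justified via Lemma \ref{minimal_element} and Lemma \ref{lemumenos}), combined with the theorem that $c(w,w^{\prime })\in \{0,\pm 2\}$ on the maximal flag, and then reduction mod $2$. The paper treats the corollary as an immediate consequence of the preceding theorem, and your write-up simply makes that deduction explicit, including the correct bookkeeping about which pairs can contribute.
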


\vspace{12pt}%
\noindent
\textbf{Remark:} Let $w$ be minimal in its coset $w\mathcal{W}_{\Theta }$
and suppose that $u<w$ is of the form $w=ur_{\beta }$, with $\beta $ a
simple root and $\Theta =\{\beta \}$. Hence $u$ is minimal in its coset. In
fact, this conditions imply that $w\beta <0$. In fact, $w\beta =ur_{\beta
}(\beta )=-u\beta $ and $u\beta \in \Pi _{w}$. So, if $u$ is not minimal in
its coset, %by the remark after lemma \ref{minimal_element},
there is $\gamma >0$ such that $u^{-1}\gamma <0$ and $\gamma \in \langle
\Theta \rangle $. As $w$ is minimal, by the same fact we know that $%
w^{-1}\gamma >0$ (otherwise we would have $\Pi ^{+}\cap w\Pi ^{-}\cap
\langle \Theta \rangle \neq \emptyset $). This implies that $r_{\beta
}(u^{-1}\gamma )>0$ and hence $u^{-1}\gamma =-\beta $. Hence $\gamma
=-u\beta =ur_{\beta }(\beta )=w\beta <0 $. This is a contradiction because $%
\gamma >0$.

\vspace{12pt}

\noindent
\textbf{Example:} Let us consider the example of $G_{2}$ with $\Theta
=\{\alpha _{1}\}$. We have the following cosets 
\begin{equation*}
\mathcal{W}=\{1,r_{1}\},\{r_{2},s_{2}\},\{s_{1},r_{1}s_{2}\},%
\{r_{2}s_{1},s_{2}^{2}\},\{s_{1}^{2},r_{1}s_{2}^{2}\},%
\{r_{2}s_{1}^{2},s_{1}^{3}\}.
\end{equation*}%
The boundary maps for the minimal element in each coset is computed using the table \ref{table2}.
\begin{itemize}
\item $\partial_5 \mathcal{S}_{r_{2}s_{1}^{2}}=-2%
\mathcal{S}_{s_{1}^{2}}$;
\item $\partial_4 \mathcal{S}_{s_{1}^{2}}=0$;
\item $\partial_3 \mathcal{S}_{r_{2}s_{1}}=0$;
\item $\partial_2 \mathcal{S}_{s_{1}}=-2\mathcal{S}_{r_{2}}$;
\item $\partial_1 \mathcal{S}_{r_{2}}=0$.
\end{itemize}
Hence
\begin{itemize}
\item $H_{5}(\mathbb{F}_{\alpha _{1}},\mathbb{Z})=0$ (in particular $\mathbb{%
F}_{\{\alpha _{1}\}}$ is not orientable).

\item $H_4(\mathbb{F}_{\alpha_1},\mathbb{Z})= \mathbb{Z}_2$.

\item $H_3(\mathbb{F}_{\alpha_1},\mathbb{Z}) = \mathbb{Z} $.

\item $H_2(\mathbb{F}_{\alpha_1},\mathbb{Z}) = 0$.

\item $H_1(\mathbb{F}_{\alpha_1},\mathbb{Z}) = \mathbb{Z}_2$.
\end{itemize}

As another source of examples, we refer to the papers \cite{Rab16} and \cite{RL182} which computes the coefficients of the isotropic and orthogonal Grassmannians.

\bibliographystyle{amsplain}

\bibliography{biblio}

\end{document}